\newcommand{\ssym}{\text{\textnormal{sym}}}
\newcommand{\eps}{\epsilon}
\newcommand{\sca}{\mathcal{A}}
\newcommand{\sch}{\mathcal{H}}
\newcommand{\1}{\mathbf{1}}
\newcommand{\R}{\mathbb{R}}
\newcommand{\om}{\Omega}
\newcommand{\us}{u_{\sigma}} 
\renewcommand{\d}{\,\text{d}}
\newcommand{\st}{\;:\;}
\newcommand{\ton}{\text{on }}
\newcommand{\tin}{\text{in }}
\newcommand{\cof}{\operatorname{cof}}
\renewcommand{\div}{\mathop{\text{div}}}
\renewcommand{\inf}{\mathop{\textnormal{inf}}}
\newcommand{\essinf}{\mathop{\textnormal{ess inf}}}
\newcommand{\meas}{\mathop{\text{meas}}}
\newcommand{\dist}{\mathop{\text{dist}}}
\newcommand{\tr}{\mathop{\text{tr}}}
\newcommand{\id}{\mathop{\text{id}}}
\newcommand{\wtilde}{\widetilde}
\newcommand{\etR}{\wtilde{e_r}}
\newcommand{\etT}{\wtilde{e_θ}}
\newcommand*\xbar[1]{%
  \hbox{%
    \vbox{%
      \hrule height 0.5pt % The actual bar
      \kern0.4ex%         % Distance between bar and symbol
      \hbox{%
        \kern-0.04em%      % Shortening on the left side
        \ensuremath{#1}%
        \kern-0.04em%      % Shortening on the right side
      }%
    }%
  }%
}
\renewcommand{\bar}{\xbar}
\renewcommand{\hat}{\widehat}
\newcommand{\Div}{\textrm{div}\,}
\newcommand*{\Cdot}[1][1.5]{%
  \mathpalette{\CdotAux{#1}}\cdot%
}
\newdimen\CdotAxis
\newcommand*{\CdotAux}[3]{%
  {%
    \settoheight\CdotAxis{$#2\vcenter{}$}%
    \sbox0{%
      \raisebox\CdotAxis{%
        \scalebox{#1}{%
          \raisebox{-\CdotAxis}{%
            $\mathsurround=0pt #2#3$%
          }%
        }%
      }%
    }%
    % Remove depth that arises from scaling.
    \dp0=0pt %
    % Decrease scaled height.
    \sbox2{$#2\bullet$}%
    \ifdim\ht2<\ht0 %
      \ht0=\ht2 %
    \fi
    % Use the same width as the original \cdot.
    \sbox2{$\mathsurround=0pt #2#3$}%
    \hbox to \wd2{\hss\usebox{0}\hss}%
  }%
} % credit to Heiko Oberdiek, http://tex.stackexchange.com/questions/81729/a-medium-dot-in-math-mode 
\newtheorem{theorem}{Theorem}
\newtheorem{proposition}[theorem]{Proposition}
\newtheorem{remark}[theorem]{Remark}
\newtheorem{lemma}[theorem]{Lemma}
\newtheorem{corollary}[theorem]{Corollary}
\title{Twists and shear maps in nonlinear elasticity: explicit solutions and vanishing Jacobians}
\author{Jonathan J. Bevan\footnote{Department of Mathematics, University of Surrey, Guildford, UK. e: j.bevan@surrey.ac.uk} \ and Sandra Käbisch$^*$}
\begin{document}
\maketitle
\begin{abstract}
\noindent In this paper we study constrained variational problems that are principally motivated by nonlinear elasticity theory.   We examine in particular the relationship between the positivity of the Jacobian $\det \nabla u$ and the uniqueness and regularity of energy minimizers $u$ that are either twist maps or shear maps.  We exhibit \emph{explicit} twist maps, defined on two-dimensional annuli, that are stationary points of an appropriate energy functional and whose Jacobian vanishes on a set of positive measure in the annulus.  Within the class of shear maps we precisely characterize the unique global energy minimizer $u_{\sigma}: \Omega\to \mathbb{R}^2$ in a model, two-dimensional case.  The shear map minimizer has the properties that (i) $\det \nabla u_{\sigma}$ is strictly positive on one part of the domain $\Omega$, (ii) $\det \nabla u_{\sigma} = 0$ necessarily holds on the rest of $\Omega$, and (iii) properties (i) and (ii) combine to ensure that $\nabla u_{\sigma}$ is not continuous on the whole domain.   \end{abstract}

\section{Introduction}
\label{Ann:sec:Introduction}

In this paper we consider minimizers of variational problems that are motivated by nonlinear elasticity theory.   The functionals we wish to minimize are of the form 
\[I(u) = \int_{\om} W(\nabla u(x))\,dx,\]
where $\om \subset \R^2$ is a domain representing the reference configuration of an elastic material, $W: \R^{2 \times 2} \to [0,+\infty]$ its stored energy function and $u:\om \to \R^2$ a deformation.  One of the tenets of the theory is that the noninterpenetrability of matter is encoded by requiring that $\det \nabla u > 0$ a.e.\@in $\om$.   This is typically imposed by setting $W(F)=+\infty$ whenever the $2 \times 2$ matrix $F$ satisfies $\det F \leq 0$, so that any deformation having finite energy necesssarily satisfies $\det \nabla u >0$ a.e.     The main purpose of this paper is to  examine in particular the relationship between the positivity of the Jacobian $\det \nabla u$ and the uniqueness and regularity of two different kinds of stationary point associated with the energy functional $I(\cdot)$.  

The first kind of stationarity results in the so-called Energy-Momentum (EM) equations
\begin{equation}\label{em}\Div (\nabla u^T DW(\nabla u) - W(\nabla u)\mathbf{1})=0\end{equation} 
formally obtained from $I(\cdot)$ by setting $\partial_{\eps}\arrowvert_{\eps = 0}I(u^{\eps}) = 0$ in the case that $u^{\eps}(x) = u(x+\eps \varphi(x))$ and $\varphi$ is a smooth, compactly supported test function.   Conditions guaranteeing that \eqref{em} holds in a rigorous sense can be found in \cite{ Ball83:MinimizersEulerLagrange,Ball02:OpenProblemsInElasticity}.   
The second type of stationarity results formally in the Euler-Lagrange (EL) equations,
\begin{equation}\label{el}\Div DW(\nabla u) = 0,\end{equation}
whose derivation from $\partial_{\eps}\arrowvert_{\eps = 0}I(u+\eps \varphi) = 0$, when the latter exists, is well known.

The sorts of stationary point we consider fall into two broad classes: twists and shears.   Twist maps operate on an annulus $A=\{x \in \R^2: \ a < |x| < b\}$ and act as the identity on $\partial A$.  Shear maps are of the form $u(x) = x + \sigma(x) e$, where $e$ is a fixed unit vector and $\sigma$ a scalar field defined on some domain, which in this paper will typically be a square $Q:=[-1,1]^2$.   We study two types of functional in each of the twist and shear map classes: both are of the form $I(u) = \int_{\om} W(\nabla u(x))\,dx$ where the set $\om$ is either the annulus, $A$, or the square, $Q$, and $W$ is of the form
\begin{align}\label{www} W(F)=\frac{1}{2} |F|^2 + h(\det F)\end{align}
defined on $2 \times 2$ matrices $F$.  The function $h$ is either (i) of the kind that penalizes $\det F \to 0$ in the sense that $h=h_0$ and $h_0(s) \to +\infty$ as $s \to 0+$, $h_0(s)=+\infty$ for $s \leq 0$ and $h_0$ is convex where it is finite\footnote{See Section \ref{Ann:sec:WithVolumeCompression} for details of additional hypotheses imposed on $h_0$}, or (ii) of the form
\begin{displaymath} h_{\infty}(s) = \left\{\begin{array}{l l} 0 & \textrm{ if } \ s \geq 0 \\
+\infty & \textrm{if } \ s < 0. \end{array} \right.\end{displaymath}
Type (i) functions $h_0$ penalize compression to zero area, while type (ii) functions $h_{\infty}$ ensure that maps $u$ with finite energy obey $\det \nabla u \geq 0$ almost everywhere in $\om$.   

Thus there are effectively four permutations, and together they generate the range of behaviours summarised in the table below.  

\par  
\vspace{2mm}
\noindent \begin{tabular}{|p{1cm}|p{6cm}|p{6.5cm}|}\hline  & & \\
& \hspace{5mm} $W(F) = \frac{1}{2} |F|^2 + h_0(\det F)$ & \hspace{7mm}$W(F)=\frac{1}{2} |F|^2 + h_\infty (\det F)$ \\ 
 & & \\
\hline  
& & \\
Twist Maps & $\bullet$ infinitely many solutions of (EL) (see \cite{PostSivaloganathan97:HomotopyConditionsExistenceMultipleEquilibria}) \newline
$\bullet$ Jacobian bounded away from $0$ on $\bar{A}$ \newline
$\bullet$ solutions belong to the class $C^3(A)$ (see \cite{PostSivaloganathan97:HomotopyConditionsExistenceMultipleEquilibria}) & 
$\bullet$ infinitely many solutions of (EM) \newline
$\bullet$ Jacobian vanishes on set of positive measure \newline
$\bullet$ solutions are explicit and of class $C^1(A)$
\\ 
 & & \\
\hline 
 & & \\
Shear Maps & $\bullet$ solution of (EL) unique \footnote{Here, (EL) is a variational inequality: see Section \ref{ss1}.} \newline
$\bullet$ Singularities at boundary can form \footnote{This is provided the Jacobian is bounded away from $0$ and boundary conditions are of `mixed' type.}  & 
$\bullet$ solution of (EL) unique \newline
$\bullet$ Jacobian vanishes on set of positive measure \newline
$\bullet$ solution cannot be of class $C^1(Q)$ for appropriate boundary data \\ 
 & & \\
\hline 
\end{tabular}

\vspace{2mm}

The non-uniqueness of solutions to the Euler-Lagrange equations of elasticity problems with mixed boundary conditions is a well known phenomenon, such as in the buckling of a rod or beam.
However, for pure displacement boundary conditions things are not so clear.
Indeed, it is still an open question whether sufficiently smooth equilibrium solutions to pure displacement boundary-value problems for homogeneous bodies with strictly polyconvex stored energy function $W$ are unique if the domain $Ω$ is homeomorphic to a ball (see Problem 8, \cite{Ball02:OpenProblemsInElasticity}).   Much work has been done in this area: see \cite{SS16,B11,Taheri03:QuasiconvexityUniquenessStationaryPoints,KS84, Zhang91} and \cite{John72:UniquenessEquilibriumDisplacementBoundarySmallStrains}.    
F. John showed in \cite{John72:UniquenessEquilibriumDisplacementBoundarySmallStrains} that a twice continuously differentiable equilibrium of sufficiently small strain is unique.   In the same paper, the author formally suggested that multiple solutions to the Euler-Lagrange equations might be found among the twist maps of a two-dimensional annulus (cf.  Problem 8, \cite{Ball02:OpenProblemsInElasticity}).   Solutions of this kind were subsequently found by  Post and Sivaloganathan\footnote{These authors also extended their arguments to the torus in 3 dimensions.} in \cite{PostSivaloganathan97:HomotopyConditionsExistenceMultipleEquilibria} in the case that $h=h_0$, in the notation introduced above, and led to Francfort and Sivaloganathan's exploration of the case $h=h_\infty$ in \cite{FrancfortSivaloganathan02:ConservationLawsNecessaryConditions}.  When $h=h_{0}$, our contribution is to improve the regularity of the twist maps they found and to deduce that the Jacobian of each solution of the Euler-Lagrange equations is bounded away from zero, in contrast to the situation encountered when compression to zero area is not penalized, that is when $h=h_{\infty}$.  This is done by using techniques of Baumann, Owen and Phillips \cite{BaumanOwenPhilips91:MaximumPrinciplesAPrioriEstimates,BaumanOwenPhillips91:MaximalSmoothness} to show that auxiliary functions $d=\det \nabla u$ and  $z = \frac{1}{2}|\nabla u|^2 + f(\det \nabla u)$, where $f(d) = h_0'(d)d - h_0(d)$, are, respectively, monotonically increasing and decreasing along the radius of the annulus.
As an additional property, we also present a maximum principle for the function $\frac{\rho }{r}:=\frac{|u(x)|}{|x|}$, where $r=|x|$.   It remains an open question whether the global energy minimizers in this case are necessarily rotationally symmetric.

  In the case that $h=h_{\infty}$, we obtain infinitely many \emph{explicit}\footnote{These examples seem to be very rare in the literature.} rotationally symmetric solutions to the Energy-Momentum equations, which are parametrized by the number of times $N$, say, that the outer boundary $S_b:=\{ x \in \R^2: |x|=b\}$ of the annulus A is twisted around the inner boundary $S_a$ (using similar notation).   All these solutions share the property that an annular region $\{x \in \R^2: \ a \leq |x| \leq k\}$ around the inner boundary $S_a$ of $A$ is mapped onto $S_a$, thereby compressing that region to `zero area'.  This region, which we call the `hedgehog region' for reasons explained later in the paper, is where most of the twisting happens:  at most one quarter of the twist is performed outside the hedgehog region, regardless of the size of $N$.   See Section \ref{Ann:sec:WithVolumeCompression} for details.   It is interesting to note that our explicit solutions do not solve the Euler-Lagrange equations\footnote{To be precise, these take the form of a variational inequality.}, the proof of which relies on an observation of \cite{FrancfortSivaloganathan02:ConservationLawsNecessaryConditions}.  We also show that our equilibrium solutions are local minimizers in suitably restricted classes of twist maps:  see Proposition \ref{wolfgangAM} and Corollary \ref{suitsupport}.

%Introduce shear solutions + summarize results.  Mention mixed bcs and effect on regularity.
%Our focus here is not on multiplicity but on relationship between positive Jacobian and regularity of minimizers - something which can be studied with certainty when the minimizer is unique.  

In the context of shear maps, the results of Section \ref{She:sect} focus on the relationship between the regularity of global energy minimizers and the positivity of the Jacobian, among other things.  
Minimizing shear maps $\us$ are unique because the map
$\sigma \mapsto I(\us)$ is strictly convex as a functional and, as is explained in Section \ref{She:sect}, the class of admissible functions is convex as a set.  The former is obvious when $h=h_\infty$ and surprising when $h=h_0$: see Lemma \ref{lem:pen} for details.   
Using the same notation as above, we find a condition that characterizes the shear map minimizer of $I_{\infty}$ and which, in conjunction with a carefully chosen type of boundary condition, provides conditions under which the global shear map minimizer, $u_{\sigma,\infty}$, say, is not of class $C^1$.  The boundary condition, which can easily be generalized, ensures that $\det \nabla u_{\sigma,\infty}=0$ on a set of positive measure in $Q$, something it has in common with the twist solutions of Section \ref{twistwithhinfinity}.  

In the final part of the paper we prove that, under certain mixed boundary conditions, which again can be generalized, the shear map minimizer  $u_{\sigma,0}$, say, of $I_0$  is such that $\nabla u_{\sigma,0}$ is not continuous at the `corners' of $Q$.  This happens under the additional assumption that 
$\det \nabla u_{\sigma,0} \geq c > 0$ a.e.\@, which would normally be thought of as a regularizing condition, but which here seems to focus discontinuities in $\nabla u_{\sigma,0}$  at points on $\partial Q$ where the character of the boundary condition changes from mixed to traction-free.  The analysis relies on results from elliptic regularity theory that are applicable precisely because $\sigma \mapsto W(\nabla \us)$ is strongly convex.

\subsection{Notation}

We denote the $2 \times 2$ real matrices by $\mathbb{R}^{2 \times 2}$, and unless stated otherwise we sum over repeated indices.  The tensor product of two vectors  $a \in \mathbb{R}^{2}$ and $b \in \mathbb{R}^{2}$ is written $a \otimes b$; it is the $2 \times 2$ matrix whose $(i,j)$ entry is $a_{i}b_{j}$.   The inner product of two matrices $X,Y \in \mathbb{R}^{2 \times 2}$ is $X \cdot Y = \tr(X^{T}Y)$.   This obviously holds for vectors too.  For points $x=(r,\theta)$ in plane polar coordinates and belonging to a domain $\om \subset \R^2$, the gradient of $\varphi: \om \to \mathbb{R}^{2}$ is
\[ \nabla \varphi = \varphi_{,_r} \otimes e_{r}(\theta) + \frac{1}{r} \varphi_{,_\theta} \otimes e_{\tau}(\theta),\]
where $e_{r}(\theta) = (\cos \theta, \sin \theta)^{T}$ and
$e_{\tau}(\theta) = (-\sin \theta, \cos \theta)^{T}$.   Throughout the paper we write $\varphi_{,_{r}} = \partial_{r}\varphi$, $\varphi_{,_{\theta}} = \partial_{\theta}\varphi$ and $\varphi_{,_{\tau}} = \frac{1}{r}\partial_{\theta}\varphi$.      In this notation the formula
\[\det \nabla \varphi = J \varphi_{,_{r}} \cdot \varphi_{,_{\tau}}\]
holds, where $J$ is the $2 \times 2$ matrix corresponding to a rotation of $\frac{\pi}{2}$ radians in the plane, i.e.,
\[ J = \left( \begin{array}{l l} 0 & -1 \\ 1 & \phantom{-}0 \end{array}\right).\]
The two most useful properties of $J$ are that (i) $J^{T} = - J$, so that in particular $a \cdot Jb = - Ja \cdot b$ for any two $a,b \in \mathbb{R}^{2}$, and (ii) $\cof A = J^{T} A J$ for any $2 \times 2$ matrix $A$. We denote the identity matrix by $\1$.  Derivatives with respect to cartesian coordinates $x_i$ for $i=1,2$ will be usually be written $\varphi_{,_{x_i}}$, and occasionally $\partial_{x_i}\varphi$.

 A function $f: \mathbb{R}^{2 \times 2} \to \mathbb{R} \cup \{ +\infty\}$ is said to be polyconvex if there exists a convex function $\phi: \mathbb{R}^{2 \times 2} \times \mathbb{R}\to \mathbb{R} \cup \{+\infty\}$ such that  
\begin{align*}f(A) & = \phi(A,\det A) \end{align*}
for all $ 2 \times 2$ real matrices $A$.   The function space setting for all the problems we consider will be $W^{1,2}(\om;\R^2)$, which we will abbreviate to $W^{1,2}(\om)$ whenever it is unambiguous to do so.  As usual,  $\rightharpoonup$ represents weak convergence in both the Sobolev space $W^{1,2}(\om)$ and the Lebesgue space $L^2(\om)$.  Since $\om \subset \R^2$, the appropriate notion of boundary measure, as generated by the boundary integrals in Green's theorem, for example, is one-dimensional Hausdorff measure, which we write either as $d\mathcal{H}^1$ or, in the case of a circular boundary, $dS$. 

Other, standard notation includes $B(a,r)$ for the ball in $\mathbb{R}^{2}$ centred at $a$ with radius $r$ and $S_r$ for the circle centred at $0$ of radius $r$.  We write $A(p,q)$ for the annulus $B(0,q) \setminus \overline{B(0,p)}$, where $p < q$, and when it causes no confusion, we abbreviate $A(a,b)$ to $A$.

\section{Minimizers in the class of twist maps}\label{sectiontwo}
\label{Ann:sec:GeneralSetting}

We begin by recalling the technical setting of twist maps  first proposed in \cite{PostSivaloganathan97:HomotopyConditionsExistenceMultipleEquilibria}.  Let $A=\{x \in \R^2: a < |x|<b\}$ and set
\begin{align}
	\mathcal{A} = \{ u ∈ W^{1,2}(A) \st u = \id \ton \partial A \}.
	\label{Ann:Aadmissible}
\end{align}
Following \cite[Section 2]{PostSivaloganathan97:HomotopyConditionsExistenceMultipleEquilibria}, one now selects subclasses of $\mathcal{A}$ by means of the winding number.  Formally, for each integer $N$ we restrict attention to maps $u: A \to \R^2$ which rotate the outer boundary $\{x \in \R^2: \ |x| = b\}$ $N$ times relative to the inner boundary $\{x \in \R^2: \ |x| = a\}$.  
More precisely, changing to polar coordinates and applying the ACL property of Sobolev functions, it is the case that for a.e.\@ $\theta \in [0,2\pi]$ the curve 
\[ \gamma_{\theta}:=\left\{\frac{u(r,\theta)}{|u(r,\theta)|}: \ a \leq r \leq b \right\}\]
is closed and continuous.  The winding number for such curves is defined by approximation using $C^1$ curves in the plane.   We recall that the winding number of a closed $C^1$ curve in the plane, i.e. $γ:[a,b] → ℝ^2 $ with $γ(a) = γ(b)$ and $γ(r) = (x(r), y(r))$, is defined by
\vspace{2mm}
\begin{align}
	\mathop{\textnormal{wind}}\# γ = \frac{1}{2π} ∫_a^b \frac{x(r) y'(r) - x'(r) y(r)}{x^2(r) + y^2(r)} \d r.
	\label{Ann:windingNumber}
\end{align}

For each integer $N$ let  
 \begin{align}
	\mathcal{A}_N = \{ u ∈ \mathcal{A} \st \mathop{\textnormal{wind}}\# γ_θ = N \text{ for a.e. } θ∈[0,2π] \}.
	\label{Ann:ANadmissible}
\end{align}
By \cite[Lemma 2.7]{PostSivaloganathan97:HomotopyConditionsExistenceMultipleEquilibria} each class $\mathcal{A}_N$ is closed with respect to weak convergence in $W^{1,2}(A)$.   The existence of a minimizer of $I(u)=\int_{A} W(\nabla u)\, dx$ then follows easily by applying the direct method of the calculus of variations.   We will apply this procedure both in the case that compression to zero area is penalized and when it is not, corresponding respectively to the choice $h=h_0$ and $h=h_{\infty}$ in the stored-energy function $W$.   We turn first to the case $h=h_\infty$.  
 
\subsection{The case $h=h_{\infty}$: twist minimizers without area compression energy}\label{twistwithhinfinity}

The problem we consider here was raised in Francfort and Sivaloganathan 
\cite{FrancfortSivaloganathan02:ConservationLawsNecessaryConditions} and is illustrative of the case where the Euler-Lagrange equations are not satisfied by minimizers: see Remark \ref{rk:ELnotsatFS} below.   Using the framework of \cite{PostSivaloganathan97:HomotopyConditionsExistenceMultipleEquilibria}, our approach is to seek solutions of the Energy-Momentum equations for the functional
\[ I_{\infty}(u) = \int_{A} \frac{1}{2}|\nabla u|^2+ h_{\infty}(\det \nabla u) \,dx \]
in the class
 \begin{align}
	\mathcal{A}_N = \{ u ∈ \mathcal{A} \st \mathop{\textnormal{wind}}\# γ_θ = N \text{ for a.e. } θ∈[0,2π] \},
	\end{align}
where the class $\sca$ is given by \eqref{Ann:Aadmissible}.    This is clearly equivalent to minimizing a Dirichlet energy 
\begin{align}
	D(u) := ∫_A |\nabla u|^2 \, dx
	\label{Ann:I0AnnulusFS}
\end{align}
on the set
\begin{align}
	\mathcal{\tilde{A}}_N = \{ u \in \tilde{\sca} \st \mathop{\textnormal{wind}}\# γ_θ = N \text{ for a.e. } θ∈[0,2π] \},
	\label{Ann:AN0admissible}
\end{align}
where 
\begin{align}
	\mathcal{\tilde{A}} = \{ u ∈ W^{1,2}(A) \st u = \id{ } \ton ∂A \text{ and } \det ∇u ≥ 0 \text{ a.e. in } A \}.
	\label{Ann:A0admissible}
\end{align}

\begin{proposition}\label{sf1} Let $I_{\infty}$ and $\sca_{N}$ be as above.  Then there is a minimizer of $I_{\infty}$ in $\sca_{N}$. \end{proposition}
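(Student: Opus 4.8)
The plan is to argue by the direct method of the calculus of variations, working with the equivalent problem of minimizing the Dirichlet energy $D$ over $\mathcal{\tilde{A}}_N$, as recorded in the discussion preceding the statement. The first task is to check that $\mathcal{\tilde{A}}_N$ is nonempty and that $m := \inf_{\mathcal{\tilde{A}}_N} D$ is finite. I would do this by exhibiting an explicit pure twist $w(r,\theta) = r\,e_r(\theta + \psi(r))$, where $\psi$ is any smooth increasing function with $\psi(a)=0$ and $\psi(b)=2\pi N$. Since $2\pi N$ is an integer multiple of $2\pi$, $w$ agrees with the identity on $\partial A$; a short computation using $\det \nabla \varphi = J\varphi_{,_r}\cdot\varphi_{,_\tau}$ gives $\det \nabla w \equiv 1 > 0$, so $w \in \mathcal{\tilde{A}}$; and the curve $r \mapsto w(r,\theta)/|w(r,\theta)| = e_r(\theta+\psi(r))$ winds exactly $N$ times, so $w \in \mathcal{\tilde{A}}_N$. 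As $w$ is smooth on $\bar{A}$, we have $D(w) < \infty$ and hence $m < \infty$.

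Next I would take a minimizing sequence $u_j \in \mathcal{\tilde{A}}_N$ with $D(u_j) \to m$. Because each $u_j$ equals the identity on $\partial A$, the Poincaré inequality applied to $u_j - \id$ bounds $\normsub{u_j}{W^{1,2}(A)}$ in terms of $D(u_j)$, so the sequence is bounded in $W^{1,2}(A)$. Passing to a subsequence, $u_j \weakconv u$ in $W^{1,2}(A)$ for some $u$. The weak lower semicontinuity of $D$, which follows from the convexity of $F \mapsto |F|^2$, then yields $D(u) \le \liminf_{j} D(u_j) = m$. It remains only to verify that the limit $u$ lies in the admissible class $\mathcal{\tilde{A}}_N$, for then $D(u) = m$ and $u$ is a minimizer; translating back through the equivalence, $u$ minimizes $I_{\infty}$ over $\sca_N$.

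Membership of $u$ in $\mathcal{\tilde{A}}_N$ has three ingredients. The boundary condition $u = \id$ on $\partial A$ passes to the weak limit because the trace operator is weakly continuous, so $u \in \sca$. The winding-number constraint survives by the cited weak closedness of $\sca_N$ with respect to weak convergence in $W^{1,2}(A)$ (\cite[Lemma 2.7]{PostSivaloganathan97:HomotopyConditionsExistenceMultipleEquilibria}). The remaining, and genuinely delicate, point is to show that the constraint $\det \nabla u \ge 0$ is preserved, and this is where I expect the main obstacle to lie. The strategy is to exploit the weak continuity of the two-dimensional Jacobian: writing $\det \nabla u_j = \partial_{x_1}(u_j^1\, \partial_{x_2} u_j^2) - \partial_{x_2}(u_j^1\, \partial_{x_1} u_j^2)$ in the distributional sense, and using that $u_j \to u$ strongly in $L^2(A)$ by Rellich's theorem while $\nabla u_j \weakconv \nabla u$ in $L^2(A)$, one obtains $u_j^1\, \nabla u_j^2 \weakconv u^1\, \nabla u^2$ in $L^1(A)$ and hence $\det \nabla u_j \to \det \nabla u$ in $\D'(A)$. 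Since each $\det \nabla u_j \ge 0$, the distributional limit is a nonnegative distribution; as $\det \nabla u \in L^1(A)$, this forces $\det \nabla u \ge 0$ almost everywhere in $A$. With all three ingredients in hand, $u \in \mathcal{\tilde{A}}_N$, which completes the argument.
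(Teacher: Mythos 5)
Your proposal is correct, and its skeleton is the same as the paper's: the direct method applied to the Dirichlet energy $D$ on $\mathcal{\tilde{A}}_N$, nonemptiness via an explicit pure twist (the paper uses precisely your map with $\psi(r)=2\pi N\tfrac{r-a}{b-a}$), the citation of \cite[Lemma 2.7]{PostSivaloganathan97:HomotopyConditionsExistenceMultipleEquilibria} for closedness of the winding-number constraint, and lower semicontinuity from convexity of $F\mapsto |F|^2$. The one place you genuinely diverge is the preservation of the constraint $\det \nabla u \geq 0$ under weak convergence: the paper simply invokes M\"uller's result \cite[Corollary 1.2]{Muller:LlogL}, which gives weak $L^1_{\mathrm{loc}}$ convergence $\det \nabla u^{(j)} \rightharpoonup \det \nabla u$ for sequences with nonnegative Jacobians, whereas you prove what is needed by hand: the distributional identity $\det \nabla u = \partial_{x_1}(u^1 \partial_{x_2}u^2) - \partial_{x_2}(u^1\partial_{x_1}u^2)$ valid for $W^{1,2}$ maps in two dimensions, the strong-times-weak convergence $u_j^1 \nabla u_j^2 \to u^1 \nabla u^2$ in $\mathcal{D}'(A)$ via Rellich, and the fact that a nonnegative distribution represented by an $L^1$ function is nonnegative a.e. Your route is self-contained and more elementary, needing only the null-Lagrangian structure of the Jacobian rather than the $L\log L$ higher-integrability machinery behind M\"uller's corollary; the paper's citation is shorter and in fact delivers more (genuine weak $L^1$ convergence of the Jacobians, not just distributional convergence), but that extra strength is not needed here. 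One cosmetic caveat: your claim of convergence \emph{weakly in $L^1$} of $u_j^1\nabla u_j^2$ is slightly stronger than what your argument directly shows (and more than you use); convergence in $\mathcal{D}'(A)$, which is what testing against $C_c^\infty$ functions yields, suffices to conclude.
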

\begin{proof} We apply the direct method of the calculus of variations to the  
formulation of the problem in terms of the Dirichlet integral $D(u)$.    Note that $\tilde{\sca}_{N}$ contains the map
\[ U(x)= r \left(\cos\left(\theta+ 2N \pi\left(\frac{r-a}{b-a}\right)\right), \, \sin\left( \theta + 2N \pi\left(\frac{r-a}{b-a}\right)\right)\right), \]
where $x = r (\cos \theta,\sin \theta)$, so that $\tilde{\sca}_{N}$ is in particular nonempty.  To show that it is weakly closed we appeal first to \cite[Lemma 2.7]{PostSivaloganathan97:HomotopyConditionsExistenceMultipleEquilibria} to ensure that the weak limit $u$, say, in $W^{1,2}(A)$ of any sequence $u^{(j)}$ in $\tilde{\sca}_{N}$ obeys the winding number constraint and boundary conditions.  Moreover, from \cite[Corollary 1.2]{Muller:LlogL}, it follows that $\det \nabla u \geq 0$ a.e. in $A$ when $\det \nabla u^{(j)} \geq 0$ a.e.\@ holds for all $j$ and $\nabla u^{(j)} \rightharpoonup \nabla u$ in $L^{2}(A)$.  Hence $\tilde{\sca}_{N}$ is weakly closed.  A straightforward argument using the convexity of the Dirichlet energy implies that $D(\cdot)$ is sequentially weakly lower semicontinuous, from which the existence of a minimizer follows.   \end{proof}

\begin{remark}\label{rk:ELnotsatFS}\emph{
We expect that the minimizer $u^N$ of $I_{\infty}$ in $\tilde{\mathcal{A}}_N$ for $N≠0$ to be degenerate in the sense that $\det \nabla u^{N}$ cannot be bounded away from $0$.   This is because if there exists $c>0$ such that $\det \nabla u^N \geq c$ in $A$ then the Euler-Lagrange equations for $I_{\infty}$ are equivalent to
\begin{align}
	\left\{
		\begin{alignedat}{3}
			Δu = 0 	&\quad\tin A \\
			u = \id &\quad\ton ∂A,
		\end{alignedat}
	\right. %}
	\label{Ann:Laplace}
\end{align}
which, by standard theory, has the unique solution $u = \id$, and which does not obey the winding number condition ($u=\id$ clearly has winding number zero).   One way in which $\det \nabla u^{N}$ could fail to be a.e.\@ bounded away from $0$ is for it to vanish on a set of positive measure in $A$:  this is certainly the case for the symmetric minimizers of which we give details later.}  \end{remark}

It is straightforward to check that the energy momentum equations associated with $I_{\infty}$ are, in a distributional sense, 
\begin{align}
	\left\{
		\begin{alignedat}{3}
			\div \left( \frac{1}{2}|\nabla u|^2 \1 -  \nabla u^T \nabla u \right) = 0 & \quad\tin A \\
			u = \id	& \quad\ton ∂A. 
		\end{alignedat}
	\right. %}
	\label{Ann:energyMomentumAnnulus0}
\end{align}
We seek a rotationally symmetric solution of this system, i.e. a solution from the set
\begin{align}
	\tilde{\mathcal{A}}_{N,\,\ssym} = \{ u \in  \tilde{\mathcal{A}}_N \st u(x) = Q^Tu(Qx) \text{ for all } Q\in SO(2) \}.
	\label{Ann:A0Nsym}
\end{align}
That such a solution exists follows from the same argument used in the proof of Proposition \ref{sf1}.  
Rotationally symmetric solutions can be represented in polar coordinates as
\begin{align}
	u(r,θ) = \rho (r) e_r( θ + ψ(r))
	\label{Ann:uRotSym}
\end{align}
where $e_r(θ) = (\cos θ, \sin θ)$. 
For brevity we shall henceforth write $e_r$ for $e_r(θ)$ and $\widetilde{e_r}$ for $e_r(θ+ψ(r))$.
Similarly, we define $e_θ(θ) = (-\sin θ, \cos θ)$ and use the abbreviations $e_θ$ and $\widetilde{e_θ}$ analogously.
We call $\rho $ the radial map and $ψ$ the angular map.  In this notation, we have the following result.

\begin{lemma}
	Let $N\in \mathbb{N}$.
	Then the radial map $\rho $ of a minimizer of $I_{\infty}$ in $\tilde{\mathcal{A}}_{N,\,\ssym}$ is differentiable and satisfies the ODE
	\begin{align}
		\dot{\rho } = \frac{1}{r} \sqrt{ \rho ^2 - \frac{\omega ^2}{\rho ^2} - a^2 + \frac{\omega ^2}{a^2} } \\
		\rho (a) = a, \quad \rho (b) = b.
		\label{Ann:DiffEqRho0}
	\end{align}
	for some $\omega \in (0,+\infty )$.
	Furthermore, the angular map $ψ$ is differentiable with
	\begin{align}
		\dot{ψ} = \frac{\omega }{R\rho ^2}
		\label{Ann:angularMap}
	\end{align}
	and $ψ(a) = 0$ and $ψ(b) = 2πN$.
	\label{th:Ann:DiffEqRho0}
\end{lemma}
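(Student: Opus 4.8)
The plan is to substitute the rotationally symmetric ansatz \eqref{Ann:uRotSym} into the Dirichlet energy and the Jacobian, reduce to a one-dimensional constrained problem in $(\rho,\psi)$, extract a conservation law from the $\psi$-variation, obtain a first integral from the $\rho$-variation, and finally fix its integration constant using the constraint $\det\nabla u\geq 0$. First I would compute, for $u(r,\theta)=\rho(r)\,e_r(\theta+\psi(r))$, that $u_{,r}=\dot\rho\,\widetilde{e_r}+\rho\dot\psi\,\widetilde{e_\theta}$ and $\tfrac1r u_{,\theta}=\tfrac\rho r\widetilde{e_\theta}$, whence
\[|\nabla u|^2=\dot\rho^2+\rho^2\dot\psi^2+\frac{\rho^2}{r^2},\qquad \det\nabla u = J u_{,r}\cdot\tfrac1r u_{,\theta}=\frac{\rho\dot\rho}{r}.\]
Since finite energy forces $\det\nabla u\geq 0$ a.e., the function $\rho^2$ is nondecreasing; as $\rho$ is continuous (being $W^{1,2}$ in one variable) with $\rho(a)=a$, this yields $\rho\geq a>0$ and reduces the Jacobian constraint to $\dot\rho\geq 0$. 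Integrating over $\theta$ turns the energy into $D(u)=2\pi\int_a^b\big(r\dot\rho^2+r\rho^2\dot\psi^2+\rho^2/r\big)\,dr$, so a minimizer, which exists by the argument used for Proposition \ref{sf1}, minimizes this functional among $(\rho,\psi)$ with $\rho(a)=a$, $\rho(b)=b$, $\dot\rho\geq 0$, $\psi(a)=0$ and $\psi(b)=2\pi N$, the last two encoding the boundary data and winding number.

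Next, since $\psi$ is unconstrained and enters only through $\dot\psi$, and the integrand is strictly convex in $\dot\psi$, for fixed $\rho$ the optimal $\psi$ satisfies $\frac{d}{dr}(r\rho^2\dot\psi)=0$, i.e. $r\rho^2\dot\psi=\omega$ for a constant $\omega$; equivalently $\dot\psi=\omega/(r\rho^2)$, which is \eqref{Ann:angularMap}. Imposing $\int_a^b\dot\psi\,dr=2\pi N$ determines $\omega$ and forces $\omega>0$ because $N\geq 1$ and $\rho\geq a>0$. Continuity of $\rho$ then makes $\dot\psi$ continuous, so $\psi\in C^1$. To obtain the $\rho$-equation I would hold $\psi$ at its optimum and vary $\rho$ subject to $\dot\rho\geq 0$; where the constraint is inactive ($\dot\rho>0$) the Euler--Lagrange equation reads $\frac{d}{dr}(r\dot\rho)=\omega^2/(r\rho^3)+\rho/r$, and multiplying by $2r\dot\rho$ integrates to the first integral
\[r^2\dot\rho^2+\frac{\omega^2}{\rho^2}-\rho^2=C.\]
Standard one-dimensional regularity for this smooth, $\dot\rho$-convex integrand (valid since $\rho\geq a>0$) upgrades $\rho$ to $C^1$, indeed smooth, on the set where $\dot\rho>0$, while $\rho\equiv a$ is trivially smooth where the constraint is active.

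The crux is to fix the constant as $C=\omega^2/a^2-a^2$, equivalently to show $\dot\rho$ vanishes at the inner edge of the region where $\rho>a$. This is precisely where the unilateral constraint $\dot\rho\geq 0$ matters: the Euler--Lagrange equation is really a variational inequality, and I expect the coincidence set $\{\dot\rho=0\}$ to be an interval $[a,k]$ abutting the inner boundary — the hedgehog region — on which $\rho\equiv a$. The main obstacle is therefore the free-boundary analysis: showing that the constraint can only be active on such an interval and that $\rho$ matches in a $C^1$ fashion at $r=k$, so that $\dot\rho(k)=0$ and continuity of the first integral forces $C=\omega^2/a^2-a^2$ on all of $[a,b]$. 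Granting this, on $[a,k]$ the identity $\rho\equiv a$ makes both sides of the stated ODE vanish, while on $(k,b]$ the first integral rearranges (using $\dot\rho\geq 0$) to $\dot\rho=\tfrac1r\sqrt{\rho^2-\omega^2/\rho^2-a^2+\omega^2/a^2}$, giving \eqref{Ann:DiffEqRho0} throughout; the boundary and winding data supply $\rho(a)=a$, $\rho(b)=b$, $\psi(a)=0$ and $\psi(b)=2\pi N$, completing the proof.
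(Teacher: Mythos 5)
Your reduction to the one-dimensional problem, the conservation law $r\rho^2\dot{\psi}=\omega$, and the first integral $r^2\dot{\rho}^2+\omega^2/\rho^2-\rho^2=C$ are all consistent with the paper, which obtains the same two identities by testing the weak Energy--Momentum equations with rotationally symmetric test functions $\hat{\rho}(r)e_r+\hat{q}(r)e_\theta$. (That route has a technical advantage over your outer variations: inner variations are unconstrained, so the first integral holds globally on $(a,b)$ with a \emph{single} constant, whereas your argument only gives it on $\{\dot{\rho}>0\}$, a priori with different constants on different components.) However, your proof is incomplete exactly where you say it is: you only ``expect'' the coincidence set $\{\dot{\rho}=0\}$ to be an interval $[a,k]$, and you fix $C=\omega^2/a^2-a^2$ only ``granting this''. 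That step is the actual content of the lemma, and the paper closes it with two arguments your proposal lacks. First, the interval structure: assume $\dot{\rho}(\bar{r})=0$ with $\dot{\rho}>0$ on $(\bar{r}-\delta,\bar{r})$, and set $z(r)=\rho^2(r)-\omega^2/\rho^2(r)+c$, so that $z=r^2\dot{\rho}^2$ by the first integral. Then $z>0$ on $(\bar{r}-\delta,\bar{r})$ and $z(\bar{r})=0$, while $\dot{z}=(2\rho+2\omega^2/\rho^3)\dot{\rho}>0$ there forces $z<z(\bar{r})=0$ on the same interval, a contradiction. Hence once $\dot{\rho}$ becomes positive it stays positive, so the zero set of $\dot{\rho}$ is an initial interval on which $\rho\equiv a$.

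Second --- and this is the idea your matching plan cannot recover even if completed --- you must exclude the case where the coincidence set is empty, i.e. $\dot{\rho}(a)>0$: then there is no free boundary point $k$, continuity of the first integral constrains nothing, and $C$ remains undetermined. The paper rules this out globally: $\dot{\rho}(a)>0$ combined with ``once positive, always positive'' and continuity gives $\dot{\rho}\geq\epsilon>0$ on $[a,b]$, so $\det\nabla u=\rho\dot{\rho}/r$ is bounded away from zero; by Remark \ref{rk:ELnotsatFS} the minimizer then solves the unconstrained Euler--Lagrange equations $\Delta u=0$ in $A$ with $u=\id$ on $\partial A$, hence $u=\id$, whose winding number is $0\neq N$. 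This winding-number contradiction is the essential missing ingredient; without it, neither $\dot{\rho}(a)=0$ nor the value of $C$ (nor, for that matter, the classical differentiability of $\rho$ across the free boundary) follows from what you have written.
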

\begin{proof}
	To prove this we test the weak form of \eqref{Ann:energyMomentumAnnulus0} with a rotationally symmetric test function $ϕ$.
	We can express $ϕ$ as 
	\begin{align}
		ϕ(r,θ) = \hat{\rho }(r) e_r + \hat{q}(r) e_θ.
	\end{align}
	with $\hat{\rho },\hat{q}\in C^\infty _c( (a,b) )$.
	Furthermore 
	\begin{align}
		\nabla u &= \dot{\rho } \etR \otimes e_r + \rho \dot{ψ}\etT \otimes e_r + \frac{\rho }{r}\etT \otimes e_θ,		\label{Ann:gradU} \\
		\nabla ϕ &= \dot{\hat{\rho }} e_r \otimes e_r + \dot{\hat{q}}e_θ \otimes e_r + \frac{1}{r}\left[ \hat{\rho }e_θ \otimes e_θ - \hat{q}e_r \otimes e_θ \right], 	\label{Ann:gradPhi} 
		\intertext{and}
		|\nabla u|^2 &= \dot{\rho }^2 + \rho ^2\dot{ψ}^2 + \frac{\rho ^2}{r^2}.
	\end{align}
	Therefore,
	\begin{align}
		\frac{1}{2} |\nabla u|^2 I - \nabla u^T \nabla u &=  \frac{1}{2}\left(\dot{\rho }^2 + \rho ^2\dot{ψ}^2 + \frac{\rho ^2}{r^2} \right) I - \left( (\dot{\rho }^2 + \rho ^2\dot{ψ}^2) e_r \otimes e_r \phantom{\frac{\rho ^2\dot{ψ}}{r}} \right.\notag\\
		&\quad + \left. \frac{\rho ^2\dot{ψ}}{r}( e_r \otimes e_θ + e_θ \otimes e_r)  + \frac{\rho ^2}{r^2}e_θ \otimes e_θ \right),
		\label{Ann:energyMomentumTensor0}
	\end{align}
	so that
	\begin{align}
		0 	&= ∫_A \left[ \frac{1}{2}|\nabla u|^2 I - \nabla u^T \nabla u \right] \cdot \nabla ϕ \, dx \notag\\
		&= 2π∫_a^b \frac{r}{2} \left(\dot{\rho }^2 + \rho ^2\dot{ψ}^2 + \frac{\rho ^2}{r^2}\right) \left(\dot{\hat{\rho }} + \frac{\hat{\rho }}{r} \right) \notag\\
		&\qquad\qquad - r \left( (\dot{\rho }^2 + \rho ^2\dot{ψ}^2) \dot{\hat{\rho }} + \frac{\rho ^2\dot{ψ}}{r}\left( \dot{\hat{q}} - \frac{\hat{q}}{r} \right) + \frac{\rho ^2}{r^2}\frac{\hat{\rho }}{r} \right) \, dr \notag\\
		&= 2π ∫_a^b \frac{r}{2} \left( \dot{\rho }^2 + \rho ^2\dot{ψ}^2 - \frac{\rho ^2}{r^2} \right)\left( \frac{\hat{\rho }}{r} - \dot{\hat{\rho }} \right) + r \frac{\rho ^2\dot{ψ}}{r}\left( \frac{\hat{q}}{r} - \dot{\hat{q}} \right) \, dr \notag\\
		&= - 2π ∫_a^b \frac{r^2}{2} \left( \dot{\rho }^2 + \rho ^2\dot{ψ}^2 - \frac{\rho ^2}{r^2} \right) \left( \frac{\hat{\rho }}{r} \right)^{\Cdot} + r\rho ^2\dot{ψ}\left( \frac{\hat{q}}{r} \right)^{\Cdot} \, dr.
		\label{Ann:energyMomentum}
	\end{align}
	Since $\hat{\rho }$ and $\hat{q}$ are arbitrary this implies that there exist constants $c$ and $\omega $ s.t.
	\begin{align}
		r^2\left( \dot{\rho }^2 + \rho ^2\dot{ψ}^2 - \frac{\rho ^2}{r^2} \right) &= c	\quad \tin (a,b) \label{Ann:ODE1}
		\intertext{and}
		r\rho ^2\dot{ψ}  &= \omega  \quad \tin (a,b).
		\label{Ann:ODE2}
	\end{align}
	Furthermore, since $∫_A |\nabla u|^2 \, dx < \infty $, it follows that $\rho \in W^{1,2}( (a,b) )$, which in turn yields $\rho \in C([a,b])$.
	Therefore $\dot{ψ} = \frac{\omega }{r\rho ^2}\in C([a,b])$ as well.
	That $\omega >0$ simply follows from the fact that, by \eqref{Ann:ODE2}, $ψ$ is a monotonic function and we want to achieve a positive winding number, i.e. $ψ(b) = 2πN>0$ and $ψ(a) = 0$.
	Substituting $\dot{ψ}$ back into \eqref{Ann:ODE1} we obtain
	\begin{align}
		r^2\dot{\rho }^2 + \frac{\omega ^2}{\rho ^2} - \rho ^2 = c
		\label{Ann:ODE_rho}
	\end{align}
	which also implies that the weak derivative $\dot{\rho }$ is continuous and is therefore the classical derivative. 
	Since $\det \nabla u = \frac{\rho \dot{\rho }}{r} \geq  0$, we find that $\rho ^2$ is monotonically increasing. 
	Therefore $\rho  \geq  a > 0$ which in turn implies $\dot{\rho } \geq  0$.
	Hence we can solve for $\dot{\rho }$ in \eqref{Ann:ODE_rho} to obtain \eqref{Ann:DiffEqRho0}.
	\par
	Now we want to prove that $c = -a^2 + \frac{\omega ^2}{a^2}$.
	In view of \eqref{Ann:ODE_rho}, this is equivalent to showing that $\dot{\rho }(a)$ has to be zero.   This is done in two steps:  first we show that if $\dot{\rho}$ vanishes then it can only do so at $r=a$, and then we prove that $\dot{\rho}(a)>0$ is impossible, which, since $\dot{\rho}$ is nonnegative, leaves only the possibility that $\dot{\rho}(a) = 0$.

Assume for a contradiction that there is a point $\bar{r}\in (a,b]$ s.t. $\dot{\rho }(\bar{r}) = 0$ and $\dot{\rho }(r)>0$ for $r\in (\bar{r}-δ,\bar{r})$ for some $δ>0$, meaning that we suppose $\dot{\rho }$ has a zero at the rightmost point of an interval where it is strictly positive.   Let $z(r)=f(\rho(r))$ where $f(\rho) = \rho^2 - \frac{\omega ^2}{\rho^2} + c$ and note that, by \eqref{Ann:ODE_rho}, $z(r) > 0$ if $\bar{r} -\delta < r < \bar{r}$ and $z(\bar{r}) = 0$.  On the other hand, a short calculation shows that $\dot{z}(r) >0$ on $(\bar{r}-\delta,\bar{r})$, and hence that $z(r) < 0$ on the same interval, a contradiction.  Thus the only possibility is that $\dot{\rho}(a) = 0$ if $\dot{\rho}$ vanishes at all.

	Now assume for a contradiction that $\dot{\rho }(a)>0$.  Then, since $\dot{\rho }\in C([a,b])$ and by the reasoning above, it is bounded away from zero on the whole of $[a,b]$, i.e. $\dot{\rho }\geq \epsilon >0$ for some $\epsilon >0$.
	But in this case, by Remark \ref{rk:ELnotsatFS}, $u$ solves the Euler-Lagrange equations
	\begin{align}
		\left\{
			\begin{alignedat}{3}
				\Delta  u = 0 	&\quad	\tin A \\
				u = \id &\quad	\ton ∂A, 
			\end{alignedat}
		\right. %}
	\end{align}
	which admit only the identity as a solution,  corresponding to $N=0$. This contradicts the winding number condition in force on $\tilde{A}_{N}$.  Hence $\dot{\rho}(a)=0$.  
\end{proof}

In short, the preceding lemma implies that we can reduce the energy-momentum equations for $\rho $ and $ψ$ to an ODE in $\rho $ with the initial condition $\rho (a) = a$.
It might seem strange that there is only 
 one parameter $\omega$ left to fit both the boundary condition $\rho (b) = b$ and to ensure that $ψ(b) = 2πN$.
However, the lack of Lipschitz continuity of the right hand side of \eqref{rk:ELnotsatFS} means that there are infinitely many solutions for each $\omega $ that differ qualitatively only by the point $k \in (a,b)$ where $\dot{\rho }$ first departs from zero, and which is therefore an additional, hidden parameter.
A rather unusual result is that this system of ODEs, and therefore the Energy-Momentum equations from which they are derived, has an \emph{explicit} solution.
\begin{theorem}
	Let $N\in \mathbb{N}$.
	Then there exist $\omega >0$ and $ k \in [a,b)$ s.t.
	\begin{align}
		\rho (r) = 
		\begin{cases}
			a, 	& r\in [a,k] \\
			\frac{1}{2} \left( \left( a^2 + \frac{\omega ^2}{a^2} \right)\frac{r^2}{k^2} + \left( a^2 + \frac{\omega ^2}{a^2} \right) \frac{k^2}{r^2} + 2\left( a^2 - \frac{\omega ^2}{a^2} \right) \right)^{\frac{1}{2}}, 	& r\in (k,b]
		\end{cases}
		\label{Ann:solution0}
	\end{align}
	is a solution to the ODE
	\begin{align*}
		r^2\dot{\rho }^2 + \frac{\omega ^2}{\rho ^2} - \rho ^2 = -a^2 +\frac{\omega^2}{a^2}
	\end{align*}
	derived in Lemma~\ref{th:Ann:DiffEqRho0}.
	Furthermore, $\omega $ and $k \in (a,b)$ are uniquely determined.
	The corresponding angular map is
	\begin{align}
		ψ(r) = 
		\begin{cases}
			\frac{\omega }{a^2} \ln\left( \frac{r}{a} \right), 	& r\in [a,k] \\
			\frac{\omega }{a^2} \ln\left( \frac{k}{a} \right) + \arctan\left( \frac{1}{2\omega } \left[ \left( a^2 + \frac{\omega ^2}{a^2} \right)\frac{r^2}{k^2} + a^2 - \frac{\omega ^2}{a^2} \right] \right) - \arctan\left( \frac{a^2}{\omega } \right),		& r\in (k,b].
		\end{cases}
		\label{Ann:solutionAngularMap}
	\end{align}
	\label{th:Annulus:SolODE}
\end{theorem}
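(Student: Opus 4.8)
The plan is to argue in three stages: verify that the stated $\rho $ solves the ODE for arbitrary admissible $(\omega ,k)$, integrate \eqref{Ann:angularMap} to obtain $ψ$, and only then pin down $\omega $ and $k$ from the two remaining conditions $\rho (b)=b$ and $ψ(b)=2πN$. For the first stage I would set $s=\rho ^2$ and abbreviate $\alpha =a^2+\omega ^2/a^2$ and $\beta =a^2-\omega ^2/a^2$, so that on $(k,b]$ the candidate reads $s=\tfrac14\big(\alpha r^2/k^2+\alpha k^2/r^2+2\beta \big)$. Writing the governing ODE (that is, \eqref{Ann:ODE_rho} with $c=-a^2+\omega ^2/a^2$, so that its right-hand side equals $-\beta $) in the form $r^2\dot s^2=4s^2-4\beta s-4\omega ^2$ via $\dot s=2\rho \dot\rho $, one checks that substitution collapses both sides to $\tfrac14\alpha ^2\big(r^4/k^4-2+k^4/r^4\big)$ using only the single algebraic identity $\alpha ^2-\beta ^2=4\omega ^2$. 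The matching at $r=k$ is then immediate: $s(k)=\tfrac12(\alpha +\beta )=a^2$ gives continuity with the hedgehog branch $\rho \equiv a$, and $\dot s(k)=0$ gives $\dot\rho (k)=0$, so $\rho $ is $C^1$ across $k$ and glues to the constant solution on $[a,k]$, which satisfies \eqref{Ann:ODE_rho} trivially. This stage is routine.

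For the angular map I would integrate $\dot ψ=\omega /(r\rho ^2)$ directly. On $[a,k]$ the integrand is $\omega /(a^2 r)$, producing the logarithm in \eqref{Ann:solutionAngularMap}. On $(k,b]$ the substitution $v=r^2/k^2$ turns $\dot ψ\,dr$ into $2\omega \,dv/(\alpha v^2+2\beta v+\alpha )$; completing the square and again invoking $\alpha ^2-\beta ^2=4\omega ^2$ gives the primitive $\arctan\!\big((\alpha v+\beta )/(2\omega )\big)$, which, evaluated between $v=1$ and $v=r^2/k^2$ and combined with $\alpha +\beta =2a^2$, reproduces \eqref{Ann:solutionAngularMap} exactly.

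The delicate stage, and the one I expect to be the main obstacle, is the existence and uniqueness of $(\omega ,k)$: since $ψ(b)$ depends a priori on \emph{both} parameters, injectivity is not evident. The device I would use is to change the integration variable on $(k,b]$ from $r$ to $\rho $. Because $\rho $ increases from $a$ to $b$ there irrespective of $k$, the relation $dr/r=d\rho \big/\sqrt{\rho ^2-\omega ^2/\rho ^2-a^2+\omega ^2/a^2}$ shows that $\rho (b)=b$ is equivalent to $\ln (b/k)=L(\omega )$, where $L(\omega ):=\int_a^b\big(\rho ^2-\omega ^2/\rho ^2-a^2+\omega ^2/a^2\big)^{-1/2}\,d\rho $. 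This fixes $k=b\,e^{-L(\omega )}$ uniquely, and the admissibility $k\in (a,b)$ becomes $L(\omega )<\ln (b/a)$. As the radicand is strictly increasing in $\omega $ for $\rho >a$, $L$ is strictly decreasing, running from $\cosh^{-1}(b/a)>\ln (b/a)$ as $\omega \to 0^+$ down to $0$ as $\omega \to \infty $; hence a unique $\omega _0>0$ satisfies $L(\omega _0)=\ln (b/a)$ (i.e. $k=a$), and $k(\omega )\in (a,b)$ precisely for $\omega >\omega _0$.

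The same substitution makes the total twist transparent. Writing $ψ(b)=T_1(\omega )+T_2(\omega )$ with hedgehog part $T_1=\tfrac{\omega }{a^2}\big(\ln (b/a)-L(\omega )\big)$ and outer part $T_2(\omega )=\int_a^b \omega \,d\rho \big/\big(\rho ^2\sqrt{\rho ^2-\omega ^2/\rho ^2-a^2+\omega ^2/a^2}\big)$, one sees that $T_2$ no longer depends on $k$. Both pieces are then strictly increasing in $\omega $ on $(\omega _0,\infty )$: $T_1$ is a product of the positive increasing factors $\omega /a^2$ and $\ln (b/a)-L(\omega )$, while the integrand of $T_2$ can be written as $\rho ^{-2}\big(P/\omega ^2+Q\big)^{-1/2}$ with $P=\rho ^2-a^2>0$ and $Q=a^{-2}-\rho ^{-2}>0$, which increases with $\omega $. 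Thus $ψ(b)$ is a continuous, strictly increasing function of $\omega $, equal to $T_2(\omega _0)\in (0,\pi /2)$ in the limit $\omega \to \omega _0^+$ and tending to $+\infty $ as $\omega \to \infty $. Since $2πN\ge 2π>\pi /2$ for every $N\ge 1$, the intermediate value theorem together with strict monotonicity yields exactly one $\omega $, and hence one $k\in (a,b)$, with $ψ(b)=2πN$, which is the asserted uniqueness. As a by-product, the bound $T_2<\pi /2$ records that at most a quarter of the total twist occurs outside the hedgehog region.
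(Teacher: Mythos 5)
Your proposal is correct, and its first two stages (direct verification of the ODE via $s=\rho^2$ and the identity $\alpha^2-\beta^2=4\omega^2$, then integration of \eqref{Ann:angularMap} using $v=r^2/k^2$) simply supply the computations that the paper compresses into ``easy to directly verify.'' Where you genuinely diverge is in fitting the parameters, and there your route differs from the paper's in a substantive way. The paper eliminates $\omega$ rather than $k$: it solves $\rho(b)=b$ explicitly for $\omega^2$ as the rational function of $k$ in \eqref{Ann:omegaOfh}, treats the total twist as a function $\psi(b;k)$ of the single parameter $k$, and asserts (with the details left to the reader) that it is continuous, monotonically increasing in $k$, and has a pole as $k\to b$; moreover, existence of an admissible pair is outsourced to the existence of the minimizer in $\tilde{\mathcal{A}}_{N,\,\ssym}$. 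You instead parametrize by $\omega$, encode $\rho(b)=b$ as $\ln(b/k)=L(\omega)$ with $L(\omega)=\int_a^b\bigl(\rho^2-\omega^2/\rho^2-a^2+\omega^2/a^2\bigr)^{-1/2}\,d\rho$, and prove strict monotonicity of $\psi(b)$ via the decomposition $T_1+T_2$ after the change of variables $r\mapsto\rho$ (legitimate because $\dot\rho>0$ on $(k,b]$, as your explicit formula shows). This buys a self-contained existence-and-uniqueness argument: no appeal to the minimizer, and the monotonicity the paper merely asserts is actually demonstrated, resting on the factorization of the radicand as $(\rho^2-a^2)\bigl(1+\omega^2/(a^2\rho^2)\bigr)$ and the endpoint values $L(0^+)=\cosh^{-1}(b/a)>\ln(b/a)$, $L(\infty)=0$. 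The cost is losing the clean algebraic relation \eqref{Ann:omegaOfh} between the two parameters, which the paper's route makes explicit. Both routes recover the quarter-twist bound $\psi(b)-\psi(k)<\pi/2$ from the same arctan expression. The only small polish your write-up would need is a one-line justification that $L$, $T_1$ and $T_2$ are continuous in $\omega$ (monotone or dominated convergence suffices) so that the intermediate value theorem applies.
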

\begin{proof}
	It is easy to directly verify that the map $\rho $ given above solves the ODE.
	The existence of $\omega$ and $k$ is ensured by the existence of the minimizer.
	It remains to check that the boundary conditions $\rho (b) = b$ and $ψ(b) = 2πN$ are met.
	Now, the condition $\rho (b) = b$ fixes $\omega >0$ as a function of $k$:
	\begin{align}
		\omega ^2 = \frac{4b^4k^2a^2 - a^4(b^2+k^2)^2}{(b^2-k^2)^2}.
		\label{Ann:omegaOfh}
	\end{align}
	Inserting this into \eqref{Ann:solutionAngularMap}, we find that $ψ(b)$ is then a continuous function of $k$.  Let us briefly write $\psi(b;k)$ to make the dependence on the parameter $k$ explicit.  We seek $k \in (a,b)$ such that $\psi(b;k)=2 \pi N$.   	It can easily be checked that 
$k \mapsto \psi(b;k)$ has a pole at $k=b$, i.e. $ψ(b;k) → \infty $ as $k→b$, and that $\psi(b;k)$
 is monotonically increasing in $k$ for $k < b$.  Hence there is a unique $k$ in $(a,b)$ such that $\psi(b,k)=2 \pi N$.   We also note that since
	\begin{align*}\frac{1}{2\omega } \left[ \left( a^2 + \frac{\omega ^2}{a^2} \right)\frac{b^2}{k^2} + a^2 - \frac{\omega ^2}{a^2} \right] & > \frac{a^2}{\omega } > 0,\end{align*}
	less than a quarter of a twist is performed in the image of the annulus $A(k,b)$, that is $\psi(b) - \psi(k) < \frac{π}{2}$.
\end{proof}

The solution obtained for $N=1$ is sketched in Fig.~\ref{fig:Ann:SolutionN1}.
We define the set $H = \{ x\in ℝ^2 \st a\leq  |x| \leq  h\} ⊆ A$ to be the region that is mapped onto the circle $S_a$, and refer to it as the \emph{hedgehog} region.  The reason for this name is that the map $x\mapsto \frac{x}{|x|}$ is commonly referred to as the hegdehog map, and in the region $H$ the solution corresponds to a scaled version of this map with added twist.
\begin{figure}[htpb]
	\begin{center}
		\begin{tikzpicture}
			%------circle 1------
			\draw[line width=1.0pt] (0,0) circle (2.5cm);
			\foreach \r in { {(2.5-1.175)/4+1.175},{2*(2.5-1.175)/4+1.175},{3*(2.5-1.175)/4 +1.175},2.5}
			\draw[style=dashed] (0,0) circle (\r);
			\foreach \thN in {1,2,...,7}
			\draw[style=dashed] ({0.5*cos(\thN*2*pi/8 r)},{0.5*sin(\thN*2*pi/8 r)}) -- ({2.5*cos(\thN*2*pi/8 r)},{2.5*sin(\thN*2*pi/8 r)});
			\draw[even odd rule,fill=gray,opacity=0.2] (0,0) circle (1.175cm) (0,0) circle (0.5cm);
			\draw[line width=1.0pt,color=red] (0.5,0) -- (2.5,0);
			\draw[line width=1.0pt] (0,0) circle (0.5cm);
			\foreach \r in { 0.5,{(1.175-0.5)/3+0.5},{2*(1.175-0.5)/3+0.5},1.175 }
			\draw[fill=black] ({\r},0) circle (0.05);
			%------map arrow-----
			\draw[->] ({2.5+0.2},0) .. controls (3.5,0.1) .. ({4.5-0.2},0);
			\node[above] at (3.5,0.1) {$u$};
			%------circle 2------
			\draw[line width=1.0pt] (7,0) circle (2.5cm);
			\draw[line width=1.0pt] (7,0) circle (0.5cm);
			%-----paths------
			%\def\h{1.555}
			%\def\w{2.472}
			%\def\c{-24.199}
			%\def\a{59.722}
			\def\h{1.175}
			\def\w{1.456}
			\def\c{-8.230}
			\def\a{12.052}
			%------help paths------
			\foreach \thN in {0,1,...,7}
			\draw[style=dashed,domain=\h:2.5,samples=20,variable=\x] %plot (7+\x , {sqrt( (4*\w^2+(\c)^2)/\a*\x^2 + \a/\x^2 + 2*\c)});
			plot (	{7+0.5*sqrt( (4*(\w)^2+(\c)^2)/\a*(\x)^2 + \a/(\x)^2 + 2*\c)*cos(360/8*\thN+\w/0.25*ln(\h/0.5)r+atan(1/(2*\w)*((4*(\w)^2+(\c)^2)/\a*(\x)^2 + \c))-atan(1/(2*\w)*((4*(\w)^2+(\c)^2)/\a*(\h)^2 + \c)))},
			{  0.5*sqrt( (4*(\w)^2+(\c)^2)/\a*(\x)^2 + \a/(\x)^2 + 2*\c)*sin(360/8*\thN+\w/0.25*ln(\h/0.5)r+atan(1/(2*\w)*((4*(\w)^2+(\c)^2)/\a*(\x)^2 + \c))-atan(1/(2*\w)*((4*(\w)^2+(\c)^2)/\a*(\h)^2 + \c)))});
			\foreach \r in { {(2.5-1.175)/4+1.175},{2*(2.5-1.175)/4+1.175},{3*(2.5-1.175)/4+1.175},2.5}
			\draw[style=dashed] (7,0) circle ({0.5*sqrt( (4*(\w)^2+(\c)^2)/\a*pow(\r,2) + \a/pow(\r,2) + 2*\c)});
			%-----red path---
			\draw[thick,color=red,domain=0.5:\h,samples=50]
			plot ({7+0.5*cos(\w/0.25*ln(\x/0.5)r)}, {0.5*sin(\w/0.25*ln(\x/0.5)r)});
			\draw[thick,color=red,domain=\h:2.5,samples=20]
			plot (	{7+0.5*sqrt( (4*(\w)^2+(\c)^2)/\a*(\x)^2 + \a/(\x)^2 + 2*\c)*cos(\w/0.25*ln(\h/0.5)r+atan(1/(2*\w)*((4*(\w)^2+(\c)^2)/\a*(\x)^2 + \c))-atan(1/(2*\w)*((4*(\w)^2+(\c)^2)/\a*(\h)^2 + \c)))},
			{  0.5*sqrt( (4*(\w)^2+(\c)^2)/\a*(\x)^2 + \a/(\x)^2 + 2*\c)*sin(\w/0.25*ln(\h/0.5)r+atan(1/(2*\w)*((4*(\w)^2+(\c)^2)/\a*(\x)^2 + \c))-atan(1/(2*\w)*((4*(\w)^2+(\c)^2)/\a*(\h)^2 + \c)))});
			%------ points ------
			\foreach \r in { 0.5,0.725,0.95,1.175 }
			\draw[fill=black] ({7+0.5*cos(\w/0.25*ln(\r/0.5)r)}, {0.5*sin(\w/0.25*ln(\r/0.5)r)}) circle (0.05);
		\end{tikzpicture}
	\end{center}
	\caption{Sketch of the solution for $N=1$.  The grey region $H$ - the hedgehog region - is mapped onto the circle $S_a$.}
	\label{fig:Ann:SolutionN1}
\end{figure}
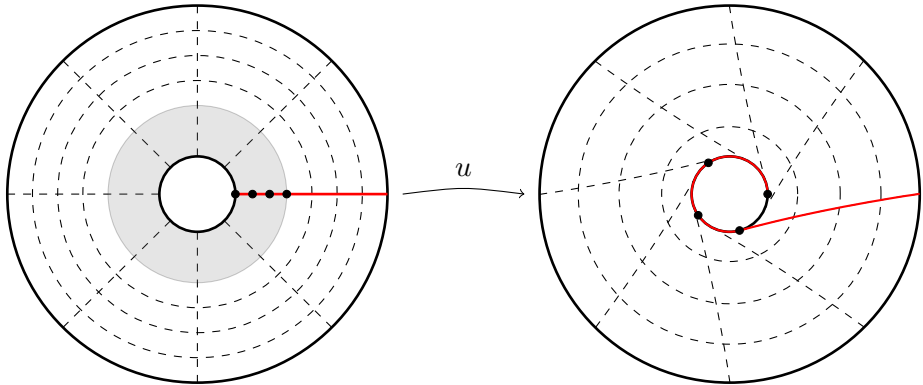
\par
So far we  have only considered rotationally symmetric maps and, for each $N\in \mathbb{N}$, we have found a unique minimizer $u_{*}^{N}$, say, in $\tilde{\mathcal{A}}_{N,\,\ssym}$, where the asterisk subscript refers to the rotational symmetry of the map. 
At the moment it is not clear whether $u_{*}^{N}$ is also a global minimizer of $I_{\infty}$ in the full class $\tilde{\mathcal{A}}_{N}$.  A natural first step towards obtaining such a result would be to prove that the global minimizer of $I_{\infty}$ in $\tilde{\sca}_{N}$ is rotationally symmetric, but we are currently unable to do this.   What we can say, however, is that $u_{*}^{N}$ is an energy minimizer with respect to variations belonging to the larger class $\tilde{\sca}_{N}$ and which obey certain conditions.  Before stating these conditions, a short technical lemma is required.

\begin{lemma}\label{brahms1} Let $\varphi \in C^2_{0}(A)$.  Then for each $R \in (a,b)$ 
\begin{align}\label{identity1} \int_{A(a,R)} \det \nabla \varphi \, dx  = \frac{1}{2} \int_{S_R} J\varphi \cdot \varphi_{,_{\tau}} \,dS.\end{align}

\end{lemma}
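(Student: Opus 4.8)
The plan is to exploit the fact that the Jacobian is a \emph{null Lagrangian} and therefore carries a divergence structure. The starting point is the pointwise identity
\[
\det \nabla \varphi = \tfrac{1}{2}\div\left((\cof \nabla \varphi)^T \varphi\right),
\]
valid for $\varphi \in C^2$. I would establish this by expanding $\div\left((\cof \nabla \varphi)^T\varphi\right) = \varphi \cdot \div(\cof \nabla \varphi) + \cof \nabla \varphi \cdot \nabla \varphi$, noting that the first term vanishes by the Piola identity $\div \cof \nabla \varphi = 0$ (this is precisely where the $C^2$ hypothesis is used, so that the relevant second derivatives commute), while $\cof A \cdot A = \tr((\cof A)^T A) = 2\det A$ for every $2\times 2$ matrix $A$. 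Integrating over the region $A(a,R)$ and applying the divergence theorem then gives
\[
\int_{A(a,R)} \det \nabla \varphi \, dx = \tfrac{1}{2}\int_{\partial A(a,R)} (\cof \nabla \varphi)^T\varphi \cdot n \, d\mathcal{H}^1,
\]
where $n$ is the outward unit normal.

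Next I would treat the two boundary circles. Since $\varphi \in C^2_0(A)$ has compact support in the open annulus, it vanishes identically near the inner circle $S_a$, so that part of $\partial A(a,R)$ contributes nothing, leaving only $S_R$, where $n = e_r$. It then remains to rewrite $(\cof \nabla \varphi)^T\varphi \cdot e_r$ as $J\varphi \cdot \varphi_{,_{\tau}}$, and this is exactly where the two properties of $J$ recorded in the notation section do the work. First, $(\cof \nabla \varphi)^T \varphi \cdot e_r = \varphi \cdot (\cof \nabla \varphi)\, e_r$; then, using $\cof A = J^T A J$ together with $J e_r = e_\tau$ and $\nabla \varphi\, e_\tau = \varphi_{,_{\tau}}$ (the latter from the polar form $\nabla \varphi = \varphi_{,_r}\otimes e_r + \varphi_{,_\tau}\otimes e_\tau$), I obtain $(\cof \nabla \varphi)\,e_r = J^T \nabla \varphi\, e_\tau = J^T \varphi_{,_{\tau}} = -J\varphi_{,_{\tau}}$. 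Finally $\varphi \cdot(-J\varphi_{,_{\tau}}) = J\varphi \cdot \varphi_{,_{\tau}}$ by the antisymmetry rule $a \cdot Jb = -Ja \cdot b$. Substituting back and using $d\mathcal{H}^1 = dS$ on $S_R$ reproduces the claimed identity.

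An equivalent and perhaps more transparent route, closer to the polar notation already in play, is to write the integral directly using $\det \nabla \varphi = J\varphi_{,_r}\cdot \varphi_{,_{\tau}}$, so that $\det \nabla \varphi \, r = J\varphi_{,_r}\cdot \varphi_{,_\theta}$, and to recognize the $\theta$-integral of the right-hand side as $\frac{d}{dr}$ of $g(r) := \tfrac{1}{2}\int_0^{2\pi} J\varphi \cdot \varphi_{,_\theta}\, d\theta$. This follows by differentiating under the integral sign and integrating by parts in the (periodic) variable $\theta$, the boundary terms cancelling by $2\pi$-periodicity and the two halves being matched via $a\cdot Jb = -Ja\cdot b$. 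Integrating in $r$ from $a$ to $R$, using $\varphi(a,\cdot)=0$ to kill $g(a)$, and converting $\varphi_{,_\theta} = R\varphi_{,_\tau}$ with $dS = R\,d\theta$ on $S_R$ then yields the result. I do not anticipate a genuine obstacle: the statement is essentially a structural consequence of the null-Lagrangian property, and the only points needing care are (i) that the divergence identity hold classically, for which $\varphi \in C^2_0(A)$ is exactly enough, and (ii) careful bookkeeping of the signs produced by $J^T = -J$ so that the final integrand emerges as $+J\varphi\cdot\varphi_{,_\tau}$ rather than its negative.
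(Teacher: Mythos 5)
Your proposal is correct, but your primary route is genuinely different from the paper's. The paper proves the identity by a bare-hands computation in polar coordinates: writing $\int_{A(a,R)} \det \nabla \varphi \, dx = \int_a^R\int_0^{2\pi} J\varphi_{,_r}\cdot\varphi_{,_\theta}\,d\theta\,dr$, integrating by parts once in $\theta$ (periodicity kills the boundary terms), swapping the order of the mixed derivative, integrating by parts in $r$ (compact support kills the term at $r=a$), and then recognizing that the remaining bulk integral is again $-\int_{A(a,R)}\det\nabla\varphi\,dx$; solving for the integral produces the factor $\tfrac12$. Your first argument instead packages the null-Lagrangian structure once and for all in the pointwise identity $\det\nabla\varphi = \tfrac12 \div\bigl((\cof\nabla\varphi)^T\varphi\bigr)$ (via the Piola identity and $\cof A\cdot A = 2\det A$) and then applies the divergence theorem, pushing all the work into the algebraic conversion $(\cof\nabla\varphi)^T\varphi\cdot e_r = J\varphi\cdot\varphi_{,_\tau}$ on $S_R$; your sign bookkeeping there ($\cof A = J^TAJ$, $Je_r = e_\tau$, $a\cdot Jb = -Ja\cdot b$) is accurate. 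This route is more structural and coordinate-free, makes the provenance of the $\tfrac12$ and of the sign transparent, and harmonizes with the paper's own use of the Piola identity in the proof of Proposition \ref{wolfgangAM} (equation \eqref{Ann:cofactorPartialIntegration}); the paper's computation, by contrast, is self-contained and never needs $\cof$ at all. Your second, ``more transparent'' route -- recognizing $r\int_0^{2\pi}\det\nabla\varphi\,d\theta$ as the derivative of $g(r)=\tfrac12\int_0^{2\pi}J\varphi\cdot\varphi_{,_\theta}\,d\theta$ and invoking the fundamental theorem of calculus -- is essentially the paper's proof reorganized, since verifying $g' = r\int_0^{2\pi}\det\nabla\varphi\,d\theta$ uses exactly the same integration by parts in $\theta$ and the same antisymmetry matching.
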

\begin{proof}  In the following we make use of the identity $\det \nabla \varphi = J \varphi_{,_{r}} \cdot \varphi_{,_{\tau}}$, where $\phi_{,_{\tau}} = \frac{1}{r} \frac{\partial \varphi}{\partial \theta}$ and $(r,\theta)$ are standard polar coordinates in two dimensions.   It may also help to recall at this point that the $2 \times 2$ matrix $J$ is antisymmetric.  For any $R$ in the interval $(a,b)$ 
\begin{align*} \int_{A(a,R)} \det \nabla \varphi \,dx & = \int_{a}^{R}\int_{0}^{2\pi}  J \varphi_{,_{r}}\cdot \varphi_{,_{\theta}}  \,d \theta \,dr \\
& =- \int_{a}^{R} \int_{0}^{2\pi}  (J \varphi_{,_{r}})_{,_\theta}\cdot \varphi \,d \theta \,dr \\
& =- \int_{0}^{2\pi} \int_{a}^{R}   (J \varphi_{,_{\theta}})_{,_{r}}\cdot \varphi \,dr \,d \theta  \\
& =- \int_{0}^{2\pi} J \varphi_{,_{\theta}}(R,\theta)\cdot \varphi(R,\theta) \,d \theta  +\int_{0}^{2\pi}\int_{a}^{R} J\varphi_{,_{\theta}} \cdot \varphi_{,_{r}} \,dr \,d \theta \\
& =- \int_{S_{R}} J \varphi_{,_{\tau}}\cdot \varphi  \,dS  - \int_{A(a,R)} \varphi_{,_{\tau}} \cdot J \varphi_{,_{r}} \,dx.\\
\end{align*}
We recognise the integrand of the rightmost term in the final line as $\det \nabla \varphi$, whereupon  \eqref{identity1} follows by rearranging the terms and observing that $-J \varphi_{,_{\tau}} \cdot \varphi = \varphi_{,{\tau}} \cdot J \varphi$.  
\end{proof}

\begin{proposition}\label{wolfgangAM}
	Let $N\in \mathbb{N}$ and let $u_{*}^{N}$ minimize $I_{\infty}$ in $\tilde{\mathcal{A}}_{N,\,\ssym}$.  
		\begin{itemize}
		\item[(i)] Let $T^+ \tilde{\sca}_{N}=\{\varphi \in W^{1,2}_{0}(A): \ \ u_{*}^{N}+\eps \varphi \in \tilde{A}_{N} \ \textrm{ for all sufficiently small} \ \eps>0\}$.   
		Then for each $\varphi$ in $T^+ \tilde{\sca}_{N}$ 
\begin{align}\label{weakmin} I_{\infty}(u_{*}^{N}+\eps \varphi) & \geq I_{\infty}(u_{*}^{N})\end{align}
	for all sufficiently small and positive $\eps$.
	
	\item[(ii)]  Let $v \in \tilde{\sca}_{N}$ be such that $\varphi:=v - u_{*}^{N}$
	satisfies
	\begin{align*} \int_{H} |\nabla \varphi|^2 + 2 \left(1+\frac{\omega^2}{a^2}\right)\left(\frac{1}{k} - \frac{1}{r}\right) \det \nabla \varphi \,dx & \geq 0.  \end{align*}
	Then \[ I_{\infty}(v) \geq I_{\infty}(u_{*}^{N}).\]
	\end{itemize}

\end{proposition}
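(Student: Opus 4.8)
The plan is to expand $I_\infty$ to second order about $u_*^N$ and to exploit two features of the explicit minimizer: that it is harmonic off the hedgehog region, and that on $H$ --- where $\det\nabla u_*^N\equiv 0$ --- the admissibility constraint $\det\nabla v\ge 0$ becomes a one-sided bound on $\cof\nabla u_*^N\cdot\nabla\varphi$. Setting $\varphi=v-u_*^N\in W^{1,2}_0(A)$ and using that $h_\infty(\det\nabla u_*^N)=h_\infty(\det\nabla v)=0$ for admissible $v$, the penalty terms cancel and
\begin{align*}
I_\infty(v)-I_\infty(u_*^N)=\int_A \nabla u_*^N\cdot\nabla\varphi\,dx+\tfrac12\int_A|\nabla\varphi|^2\,dx,
\end{align*}
the same identity holding with $v$ replaced by $u_*^N+\eps\varphi$. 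First I would analyse the linear term. A direct computation from \eqref{Ann:ODE1}--\eqref{Ann:ODE2} shows that the $\etR$- and $\etT$-components of $\Delta u_*^N$ both vanish on the outer annulus $A(k,b)$, so $u_*^N$ is harmonic there, whereas on $H$, where $\rho\equiv a$, one finds $-\Delta u_*^N=\tfrac{a^4+\omega^2}{a^3}\,r^{-2}\,\etR$. Since $u_*^N\in C^1(A)$ the interface contributions on $S_k$ cancel on integrating by parts, and the first variation localizes to $H$:
\begin{align*}
\int_A\nabla u_*^N\cdot\nabla\varphi\,dx=-\int_H \Delta u_*^N\cdot\varphi\,dx=\int_H \tfrac{a^4+\omega^2}{a^3}\,r^{-2}\,\etR\cdot\varphi\,dx.
\end{align*}

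For (i), admissibility of $u_*^N+\eps\varphi$ for all small $\eps>0$ gives on $H$ that $\eps\,\cof\nabla u_*^N\cdot\nabla\varphi+\eps^2\det\nabla\varphi\ge 0$; dividing by $\eps$ and letting $\eps\to0^+$ yields $\cof\nabla u_*^N\cdot\nabla\varphi\ge 0$ a.e.\ on $H$. I would then convert the localized linear term into a weighted cofactor pairing. By the Piola identity $\Div\cof\nabla u_*^N=0$ on $H$, any radial weight $w$ with $w(k)=0$ satisfies $\int_H w\,\cof\nabla u_*^N\cdot\nabla\varphi\,dx=-\int_H\big((\cof\nabla u_*^N)\nabla w\big)\cdot\varphi\,dx$, the boundary terms on $S_a$ and $S_k$ vanishing because $\varphi=0$ on $S_a$ and $w(k)=0$. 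Choosing $w$ with $w(k)=0$ so that $(\cof\nabla u_*^N)\nabla w=\Delta u_*^N$ on $H$ --- a first-order linear ODE for $w$ whose solution is nonnegative on $H$ --- identifies this pairing with the localized first variation. Hence $\int_A\nabla u_*^N\cdot\nabla\varphi=\int_H w\,\cof\nabla u_*^N\cdot\nabla\varphi\ge 0$, and as the quadratic term is nonnegative, \eqref{weakmin} follows.

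For (ii) the localization and the same weighted integration by parts apply, but now only $v$ (not $u_*^N+\eps\varphi$) is admissible, so the one-sided bound weakens to $\cof\nabla u_*^N\cdot\nabla\varphi\ge-\det\nabla\varphi$ on $H$. Since $w\ge 0$ there,
\begin{align*}
\int_A\nabla u_*^N\cdot\nabla\varphi=\int_H w\,\cof\nabla u_*^N\cdot\nabla\varphi\,dx\ge-\int_H w\,\det\nabla\varphi\,dx.
\end{align*}
Discarding the nonnegative outer term $\tfrac12\int_{A(k,b)}|\nabla\varphi|^2$ leaves
\begin{align*}
I_\infty(v)-I_\infty(u_*^N)\ge\tfrac12\int_H|\nabla\varphi|^2\,dx-\int_H w\,\det\nabla\varphi\,dx,
\end{align*}
and hypothesis (ii) is exactly the assertion that this right-hand side is nonnegative, $-2w$ being the (nonpositive) coefficient of $\det\nabla\varphi$ displayed there. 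Lemma~\ref{brahms1} can be used in tandem to recast $\int_H\det\nabla\varphi$ as a boundary integral over $S_k$ if a boundary formulation is wanted.

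The main obstacle I anticipate is the explicit determination of the radial weight $w$: integrating $(\cof\nabla u_*^N)\nabla w=\Delta u_*^N$ with $w(k)=0$, confirming its sign on $H$, and matching it to the coefficient displayed in (ii). The conceptual crux is that, although $u_*^N$ violates the Euler--Lagrange equations on $H$, its defect $\Delta u_*^N$ is purely radial (along $\etR$) and can be absorbed into $\cof\nabla u_*^N$, against which admissibility supplies precisely the sign needed; the regularity bookkeeping for the two integrations by parts across $S_k$ (using $u_*^N\in C^1$ and piecewise $C^2$) is routine but should be verified.
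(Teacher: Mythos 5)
Your strategy is essentially the paper's: both arguments expand the Dirichlet energy about $u_{*}^{N}$, localize the first variation to the hedgehog region $H$ (using harmonicity of $u_{*}^{N}$ on $A(k,b)$ and continuity of $\nabla u_{*}^{N}$ across $S_k$), extract from admissibility the pointwise inequality $\cof \nabla u_{*}^{N}\cdot \nabla \varphi \geq 0$ on $H$ for part (i), respectively $\cof \nabla u_{*}^{N}\cdot \nabla \varphi \geq -\det \nabla \varphi$ for part (ii), and then pair this sign information against the localized first variation via the Piola identity. Where the paper slices $H$ into circles $S_R$, obtains a one-parameter family of integral inequalities, and integrates them against a radial weight $\zeta(R)\,dR$, you introduce a single radial weight $w$ with $w(k)=0$ solving $(\cof \nabla u_{*}^{N})\nabla w = \Delta u_{*}^{N}$ and integrate by parts once; by Fubini these are the same device in different packaging. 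Your part (i) is complete and correct, since there only $w\geq 0$ is needed.

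The gap is in part (ii), at exactly the step you flagged but did not carry out: the identification of $-2w$ with the coefficient displayed in the statement. Solving your ODE with $(\cof \nabla u_{*}^{N})e_r = \tfrac{a}{r}\etR$ and $\Delta u_{*}^{N} = -\tfrac{a}{r^2}\bigl(1+\tfrac{\omega^2}{a^4}\bigr)\etR$ gives $\dot{w} = -\bigl(1+\tfrac{\omega^2}{a^4}\bigr)\tfrac1r$, hence $w(r) = \bigl(1+\tfrac{\omega^2}{a^4}\bigr)\ln(k/r)$. Your argument therefore proves the implication with coefficient $2\bigl(1+\tfrac{\omega^2}{a^4}\bigr)\ln(r/k)$ multiplying $\det\nabla\varphi$, which is \emph{not} the coefficient $2\bigl(1+\tfrac{\omega^2}{a^2}\bigr)\bigl(\tfrac1k-\tfrac1r\bigr)$ in the statement: the two weights differ in constant and in $r$-dependence, and since $\det\nabla\varphi$ is sign-indefinite neither hypothesis implies the other. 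So as written you have proved a variant of (ii), not (ii) itself. It is worth recording that your two computations are the correct ones, whereas the paper's proof rests on $\Delta u = -\tfrac{a}{r^2}\bigl(1+\tfrac{\omega^2}{a^2}\bigr)\etR$ and on the boundary identity $\int_{S_R}(\cof\nabla u)n\cdot\varphi\,dS = a\int_{S_R}\etR\cdot\varphi\,dS$ in place of $\tfrac{a}{R}\int_{S_R}\etR\cdot\varphi\,dS$; these two slips conspire to produce the weight appearing in the stated (ii), and redoing the paper's own slicing argument with the corrected formulas reproduces your logarithmic weight. So the mismatch is plausibly an erratum in the proposition rather than a defect of your method; but a blind proof of the statement as displayed it is not, and the matching step cannot be waved through.
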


\begin{proof}
For brevity, let $u_{*}^{N}=u$ in the following, and recall that $D(u) = \int_{A}|\nabla u|^2 \,dx$.    The proof of parts (i) and (ii) have a common beginning which relate the quantity $\int_{A} \nabla u \cdot \nabla \varphi \, dx$ to terms involving $\cof \nabla u \cdot \nabla \varphi$.  The former term is clearly of importance when one considers the expansion 
\begin{align}\label{hubertparry}D(u+\varphi) & = D(u) + 2 \langle \nabla u , \nabla \varphi \rangle + D(\varphi)
\end{align}
and where one looks for conditions guaranteeing that (at least) one of $\langle \nabla u , \nabla \varphi \rangle$ and $D(\varphi)+ \langle \nabla u , \nabla \varphi \rangle$ is nonnegative.   Here $\langle \cdot , \cdot  \rangle$ is the $L^2(A)$ inner product.

First observe that since $u$ is smooth on $H$ and $A \setminus H$ and its first derivatives are continuous across the boundary $S_h$, Green's theorem implies that
	\begin{align}
		∫_A \nabla u \cdot \nabla \varphi  \, dx = -∫_H \Delta u \cdot \varphi  \, dx	
		\label{Ann:partialIntLaplace}
	\end{align}
	Notice that, since $u$ is harmonic on $A\setminus H$, the domain of integration of the right-hand side is the set $H$.
	Next, the specific form of the solution $u$ implies that $\Delta u = -\frac{a}{r^2}\left( \frac{\omega ^2}{a^2}+1 \right)\etR$, so that
	\begin{align}
		∫_A \nabla u \cdot \nabla \varphi  \, dx = a\left( \frac{\omega ^2}{a^2} + 1 \right) ∫_H \frac{1}{r^2} \etR \cdot \varphi   \,dx.
		\label{Ann:DerivativeIin_etr}
	\end{align}
	Now, using the same notation as in the previous lemma, we can integrate $\cof \nabla u \cdot \nabla \varphi $ on $A(a,R)$ for each fixed $R \in (a,b)$ to obtain 
\begin{align}
	∫_{A(a,R)} \cof \nabla u \cdot \nabla \varphi  \, dx = ∫_{S_R} (\cof \nabla u)n \cdot \varphi  \, dS = a ∫_{S_R} \etR \cdot \varphi  \, dS.
		\label{Ann:cofactorPartialIntegration}
	\end{align}	
	Here, the specific form of the solution $u$ has been used again: to be precise, one uses \eqref{Ann:gradU} to calculate $\cof \nabla u = \frac{\rho}{r} \etR$, which together with Piola's identity $\Div (\cof \nabla u) = 0$ and Green's theorem yields the stated expression.   The point we exploit below is that the quantity $\etR \cdot \varphi$ appears in both \eqref{Ann:DerivativeIin_etr} and \eqref{Ann:cofactorPartialIntegration}, enabling us to control the term $\langle \nabla u, \nabla \varphi \rangle$ using information about $\cof \nabla u \cdot \nabla \varphi$.  
		
\vspace{1mm}
	
\noindent{\textbf{Proof of (i)}} Let $\varphi$ belong to $T^+ \tilde{\sca}_{N}$.  Then for all sufficiently small $\eps>0$ 
\[ \det \nabla u + \eps\cof \nabla u \cdot \nabla \varphi + \eps^2 \det \nabla \varphi \geq 0 \]
a.e.\@ in $A$, and since $\det \nabla u =0 $ on $H$ it is in particular true that 
\[ \eps\cof \nabla u \cdot \nabla \varphi + \eps^2 \det \nabla \varphi \geq 0 \]
on $H$.  Dividing by $\eps>0$ and letting $\eps \to 0$ yields $\cof \nabla u \cdot \nabla \varphi \geq 0$ pointwise a.e.\@ in $H$.  From this and \eqref{Ann:cofactorPartialIntegration} it follows that 
\[ a∫_{S_R} \etR \cdot \varphi  \, dS \geq 0 \]
for $R \in (a,h)$.  Replacing $R$ by $r$, multiplying the latter inequality by 
\[ \zeta(r):= \left( \frac{\omega ^2}{a^2} + 1 \right)\frac{1}{r^2} \]
and integrating with respect to $r$ over $(a,k)$ implies, by \eqref{Ann:DerivativeIin_etr}, that $\langle \nabla u , \nabla \varphi \rangle \geq 0$.  Hence, by replacing $\varphi$ with $\eps \varphi$ in \eqref{hubertparry}, we must have $D(u+\eps \varphi) \geq D(u)$ for all sufficiently small $\eps > 0$.  It follows that \eqref{weakmin} must hold, which concludes the proof of part (i).

\vspace{1mm} 

\noindent{\textbf{Proof of (ii)}} Let $v \in \tilde{\sca}_{N}$ be admissible and let $\varphi=v-u$.  Since $v$ is admissible and $\det \nabla u = 0$ a.e.\@ on $H$, we can argue as above that
\[ \cof \nabla u \cdot \nabla \varphi + \det \nabla \varphi \geq 0\]
a.e.\@ on $H$.  Inserting this into \eqref{Ann:cofactorPartialIntegration} yields for each $R \in (a,k)$ that
\[ a∫_{S_R} \etR \cdot \varphi  \,dS  \geq - \int_{A(a,R)} \det \nabla \varphi \,dx. \]
By a straightforward density argument we can suppose that $\varphi$ is of class $C_{0}^{2}(A)$.    In particular, we can apply Lemma \ref{brahms1}	
to deduce that
\[ a∫_{S_R} \etR \cdot \varphi  \, dS \geq -\frac{1}{2} \int_{S_R} J \varphi \cdot \varphi_{,_{\tau}} \,dS. \]
Changing $R$ to $r$, mutiplying both sides by $\zeta(r)$ , integrating with respect to $r$ over $(a,k)$ and recalling \eqref{Ann:DerivativeIin_etr}, it follows that
\begin{align}\label{paisiello} 2 \langle \nabla u , \nabla \varphi \rangle \geq - \int_{H} \zeta(r) J \varphi \cdot \varphi_{,_{\tau}}\,dx.\end{align} 
The function $\zeta$ is a constant multiple of $1/r^2$, so we focus now on proving that
\[\int_{H} -\frac{1}{r^2}  J \varphi \cdot \varphi_{,_{\tau}}  \, dx = 2 \int_{H} \left(\frac{1}{k} - \frac{1}{r}\right)	\det \nabla \varphi \,dx.\]
This can be seen as follows:
\begin{align*}  \int_{H} -\frac{1}{r^2}  J \varphi \cdot \varphi_{,_\tau}  \, dx & = \int_{0}^{2\pi} \int_{a}^{k} \left(\frac{1}{r}\right)_{,_{r}} J\varphi \cdot \varphi_{,_\theta} \,dr \, d \theta \\
& = \frac{1}{k} \int_{S_{k}} J \varphi \cdot \varphi_{,_{\tau}} \,dS  - \int_{H} \frac{1}{r}J \varphi_{,_{r}} \cdot \varphi_{,_{\tau}} \, dx - \int_{a}^{k}\int_{0}^{2\pi} \frac{1}{r}J\varphi \cdot (\varphi_{,_{r}})_{,_{\theta}} \,d\theta \, dr \\
& = \frac{2}{k} \int_{H} \det \nabla \varphi \,dx - \int_{H} \frac{1}{r}\det \nabla \varphi \, dx  + \int_{H} \frac{1}{r} J \varphi_{,_{\tau}} \cdot \varphi_{,_{r}} \,dx \\
& = 2\int_{H} \left(\frac{1}{k} - \frac{1}{r}\right)	\det \nabla \varphi \,dx.
\end{align*}
Hence 
\[ - \int_{H} \zeta(r) J \varphi \cdot \varphi_{,_{\tau}}\,dx \geq 2\left(1+\frac{\omega^2}{a^2}\right) \int_{H} \left(\frac{1}{k} - \frac{1}{r}\right)	\det \nabla \varphi \,dx,\]
so that, by \eqref{paisiello}, 
\[  2 \langle \nabla u , \nabla \varphi \rangle \geq 2\left(1+\frac{\omega^2}{a^2}\right) \int_{H} \left(\frac{1}{k} - \frac{1}{r}\right)	\det \nabla \varphi \,dx.\]
Inserting this into \eqref{hubertparry} gives
\[ D(v) \geq D(u) + \int_{H} |\nabla \varphi|^2 + 2\left(1+\frac{\omega^2}{a^2}\right) \left(\frac{1}{k} - \frac{1}{r}\right)	\det \nabla \varphi \,dx + \int_{A\setminus H} |\nabla \varphi|^2 \,dx,\]
from which the proof of part (ii) of the Proposition can easily be concluded.  
\end{proof}

This leads naturally to the following result that $u_{\ast}^{N}$ is a minimizer of $I_\infty$ with respect to perturbations with suitably located support.

\begin{corollary}\label{suitsupport}  Let $N\in \mathbb{N}$ and let $u_{*}^{N}$ minimize $I_{\infty}$ in $\tilde{\mathcal{A}}_{N,\,\ssym}$.    Let $v \in \tilde{\sca}_N$ be such that $\varphi:=v-u_{\ast}^N$ has support in the annulus $A(r_{\ast},b) \subset A$, where
\begin{align*}
\frac{1}{r_{\ast}} = \frac{1}{k} + \frac{a^2}{a^2+\omega^2}.
\end{align*}
Then $I_{\infty}(v) \geq I_{\infty}(u_{\ast}^{N})$.
\end{corollary}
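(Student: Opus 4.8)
The plan is to deduce this directly from part (ii) of Proposition~\ref{wolfgangAM}. By that result it is enough to check that $\varphi := v - u_{\ast}^{N}$ satisfies
\[
\int_{H} |\nabla \varphi|^{2} + 2\left(1+\frac{\omega^{2}}{a^{2}}\right)\left(\frac{1}{k}-\frac{1}{r}\right) \det \nabla \varphi \, dx \geq 0,
\]
and my strategy is to show that, under the support hypothesis, the integrand is in fact nonnegative pointwise a.e.\@ on $H$. The one algebraic ingredient I would isolate first is the elementary bound $|\det F| \leq \frac{1}{2}|F|^{2}$, valid for every $F \in \R^{2 \times 2}$; this follows at once from writing $\frac{1}{2}|F|^{2} \mp \det F$ as a sum of two squares. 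Note that no extra regularity of $\varphi$ is needed here, since both $|\nabla \varphi|^{2}$ and $\det \nabla \varphi$ are integrable for $W^{1,2}$ maps in two dimensions and the determinant bound holds a.e.

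Next I would locate $r_{\ast}$ relative to the hedgehog region. Since $\frac{a^{2}}{a^{2}+\omega^{2}}>0$, the defining relation for $r_{\ast}$ gives $\frac{1}{r_{\ast}} > \frac{1}{k}$, hence $r_{\ast} < k$, so $r_{\ast}$ lies strictly inside $H = A(a,k)$. By hypothesis $\varphi$ vanishes on $A(a,r_{\ast})$, so within $H$ the support of $\varphi$ is confined to the sub-annulus $A(r_{\ast},k)$, where $r_{\ast} \leq r \leq k$. On $H$ the factor $\frac{1}{k}-\frac{1}{r}$ is nonpositive, so multiplying the determinant bound $\det \nabla \varphi \leq \frac{1}{2}|\nabla \varphi|^{2}$ by the nonpositive coefficient $2(1+\frac{\omega^{2}}{a^{2}})(\frac{1}{k}-\frac{1}{r})$ reverses the inequality and yields
\[
|\nabla \varphi|^{2} + 2\left(1+\frac{\omega^{2}}{a^{2}}\right)\left(\frac{1}{k}-\frac{1}{r}\right) \det \nabla \varphi \geq \left(1 + \left(1+\frac{\omega^{2}}{a^{2}}\right)\left(\frac{1}{k}-\frac{1}{r}\right)\right)|\nabla \varphi|^{2}.
\]

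It then remains only to verify that the scalar prefactor on the right is nonnegative on the support of $\varphi$. A direct rearrangement shows that $1 + (1+\frac{\omega^{2}}{a^{2}})(\frac{1}{k}-\frac{1}{r}) \geq 0$ is equivalent to $\frac{1}{r} \leq \frac{1}{k} + \frac{a^{2}}{a^{2}+\omega^{2}} = \frac{1}{r_{\ast}}$, i.e.\@ to $r \geq r_{\ast}$. This is exactly the support condition on $\varphi$, so the prefactor -- and hence the whole integrand -- is nonnegative a.e.\@ on $H$. Integrating over $H$ gives the hypothesis of Proposition~\ref{wolfgangAM}(ii), and the conclusion $I_{\infty}(v) \geq I_{\infty}(u_{\ast}^{N})$ follows. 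The only real subtlety, and the point where the specific value of $r_{\ast}$ is forced, is this final calibration: the threshold radius is chosen precisely so that the determinant bound is exactly tight enough to absorb the wrong-signed cross term, leaving no room to enlarge the support of $\varphi$ further inward without risking a negative contribution.
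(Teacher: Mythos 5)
Your proof is correct and takes essentially the same route as the paper: both reduce the claim to Proposition~\ref{wolfgangAM}(ii) and establish pointwise nonnegativity of the integrand on $H$ via Hadamard's inequality $2|\det F| \leq |F|^2$, with $r_{\ast}$ calibrated exactly so that the coefficient $\left(1+\frac{\omega^2}{a^2}\right)\left(\frac{1}{k}-\frac{1}{r}\right)$ has modulus at most $1$ wherever $S_r$ meets the support of $\varphi$. Your sign-tracking rearrangement into the nonnegative prefactor $1+\left(1+\frac{\omega^2}{a^2}\right)\left(\frac{1}{k}-\frac{1}{r}\right)$ is just an equivalent restatement of the paper's bound.
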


\begin{proof}  If $\textrm{spt}\,\varphi$ lies in $A(r_\ast,b)$ as defined then a simple calculation shows that 
\begin{align*}  \left|\left(1+\frac{\omega^2}{a^2}\right)\left(\frac{1}{k} - \frac{1}{r}\right)\right| &\leq 1  \end{align*}
for any $r \leq k$ such that $S_r$ meets $\textrm{spt}\,\varphi$.  Hence, by Hadamard's inequality, which in the $2 \times 2$ case is $2|\det F| \leq |F|^2$, the quantity
\[ |\nabla \varphi|^2 + 2 \left(1+\frac{\omega^2}{a^2}\right)\left(\frac{1}{k} - \frac{1}{r}\right) \det \nabla \varphi \]
is pointwise nonnegative, and hence part (ii) of Proposition \ref{wolfgangAM} implies that $I_{\infty}(u_{\ast}^{N}+\varphi) \geq I(u_{\ast}^{N})$.   
\end{proof}

%TODO: mention that it exists and that we have for \varphi  rotationally symmetric u_\epsilon (x) = u(x + \epsilon \varphi (x)) is in A^0_N,sym as well?
\subsection{The case $h=h_{0}$: twist minimizers with area compression energy}
\label{Ann:sec:WithVolumeCompression}
We now return to the case also considered by Post and Sivaloganathan \cite{PostSivaloganathan97:HomotopyConditionsExistenceMultipleEquilibria}.  
We seek a minimizer of the functional
\begin{align}
	I_{0}(u) = ∫_A \frac{1}{2}|\nabla u|^2 + h_{0}(\det \nabla u) \, dx
	\label{Ann:minAN}
\end{align}
for each $N\in \mathbb{N}$, but where this time the local invertibility condition $\det \nabla u > 0$ a.e.\@ is encoded in the function $h_0$ via the properties
\begin{itemize}
	\item[(H1)] $h_0$ is convex with $h_0\geq 0$
	\item[(H2)] $h_0 \in C^3( (0,+\infty ) )$ and for some positive constants $s,c_1,c_2$ and $d_0$,
		$c_1d^{-s-k} \leq  (-1)^k h_0^{(k)}(d) \leq  c_2 d^{-s-k}$ for $0<d<d_0$ and $k=0,1,2$
	\item[(H3)] $h_0(d) = +\infty $ for $d\leq 0$
	\item[(H4)] For some real number $\tau $ and positive constants $c_3$, $c_4$ and $d_1$
		$c_3d^\tau  \leq  h_0''(d) \leq  c_4d^\tau $ for $d\geq d_1$.
\end{itemize}
Again, instead of looking at the whole of $\tilde{\mathcal{A}}_N$, we focus on those functions in $\tilde{\mathcal{A}}_N$ that are rotationally symmetric, i.e. we minimize $I_0$ on the set $\tilde{\sca}_{N, \,\ssym}$ 
defined in \eqref{Ann:A0Nsym}.  Using the same notation as in the previous section, and by following  \cite{PostSivaloganathan97:HomotopyConditionsExistenceMultipleEquilibria}, we conclude that the rotationally symmetric minimizer $u_{\ast}^{N}$ of $I_0$ in $\tilde{\sca}_{N, \,\ssym}$ has radial and angular parts $\rho ,\psi$ of class $C^2(a,b)$ and, moreover, that $u_{\ast}^{N}$ solves the Euler-Lagrange equations, which for rotationally symmetric maps simplify to 
\begin{align}
	\nonumber \left[ r\dot{\rho } + \rho h_0'(d) \right]' &= \frac{\rho }{r} + r\rho \dot{ψ}^2 + \dot{\rho }h_0'(d)
	\intertext{and}
	r\rho ^2\dot{ψ} &= \omega.	\label{Ann:ELequation}
\end{align}
In fact, since we assume slightly stronger conditions on $h_0$ than Post and Sivaloganathan do, we actually obtain that  $\rho \in C([a,b])∩C^3(a,b)$.
Since $I_{0}(u_{\ast}^{N})<+\infty$, it is impossible for $\det \nabla u_{\ast}^{N}$ to vanish on a set of positive measure.  However, it may still be possible for $\dot{\rho}(r) = 0$ for some $r$ (where $r=a+$ is understood on the inner boundary and $r=b-$ on the outer), which would correspond to $\det \nabla u_{\ast}^{N}(x)=0$ on the circle $S_r$.  This was the case for each $r \in [a,h]$, for example, in the previous section of the paper.   The following lemma is motivated by the well-known works  \cite{BaumanOwenPhilips91:MaximumPrinciplesAPrioriEstimates,BaumanOwenPhillips91:MaximalSmoothness}.

\begin{lemma}\label{lem:monotonedz}
	Let $N\in \mathbb{N}$, let $u_{\ast}^{N}$ minimize $I_0$ in $\tilde{\sca}_{{N}, \,\ssym}$ and define the function $f: (0,\infty) \to \mathbb{R}$ by $f(s):=sh_0'(s) -h_0 (s)$.  Define the functions $d:=\det \nabla u_{\ast}^{N}$ and $z := \frac{1}{2}|\nabla u_{\ast}^{N}|^2 + f(d)$ on the annulus $A$.  Then $d$ and $z$ depend only on the radial variable $r$, and $d$ is strictly monotonically increasing on $(a,b)$ while $z$ is strictly monotonically decreasing on $(a,b)$.
	\label{th:Ann:dIncreasingZDecreasing}
\end{lemma}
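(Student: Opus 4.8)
The plan is to use the rotational symmetry of $u_{\ast}^{N}$ to reduce $d$ and $z$ to functions of the radial variable $r$ alone, and then to read off the signs of $\dot d$ and $\dot z$ from two algebraic identities obtained from the Euler--Lagrange system \eqref{Ann:ELequation}. Since $u_{\ast}^{N}\in\tilde{\sca}_{N,\ssym}$ it has the form \eqref{Ann:uRotSym}, so by \eqref{Ann:gradU} one has $d=\det\nabla u_{\ast}^{N}=\tfrac{\rho\dot\rho}{r}$ and $|\nabla u_{\ast}^{N}|^{2}=\dot\rho^{2}+\rho^{2}\dot\psi^{2}+\tfrac{\rho^{2}}{r^{2}}$; both quantities, and hence $z=\tfrac12|\nabla u_{\ast}^{N}|^{2}+f(d)$, depend only on $r$. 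I record two facts used repeatedly: first, $f'(s)=s\,h_{0}''(s)$, which is immediate from $f(s)=s h_{0}'(s)-h_{0}(s)$; second, the angular equation in \eqref{Ann:ELequation} reads $r\rho^{2}\dot\psi=\omega$ with $\omega>0$, since integrating $\dot\psi=\tfrac{\omega}{r\rho^{2}}$ gives $\psi(b)-\psi(a)=2\pi N>0$. I work on the set where $d>0$, so that $h_{0}'(d)$ and $h_{0}''(d)$ are defined; that this set is all of $(a,b)$ is part of the conclusion, to be dealt with at the end.

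To prove $\dot d>0$ I would convert the first (radial) equation of \eqref{Ann:ELequation} into a single ODE for $d$. Expanding its left-hand side, the two terms $\dot\rho\,h_{0}'(d)$ cancel, leaving $r\ddot\rho+\dot\rho+\rho\,h_{0}''(d)\dot d=\tfrac{\rho}{r}+r\rho\dot\psi^{2}$. Replacing $r\rho\dot\psi^{2}$ by $\tfrac{\omega^{2}}{r\rho^{3}}$, multiplying by $\rho$, and eliminating $\ddot\rho$ and $\dot\rho^{2}$ by means of $\rho\dot\rho=rd$, its derivative $\rho\ddot\rho=d+r\dot d-\dot\rho^{2}$, and $\dot\rho^{2}=\tfrac{r^{2}d^{2}}{\rho^{2}}$, I expect everything to collapse to
\[
\left(r^{2}+\rho^{2}h_{0}''(d)\right)\dot d=\frac{(\rho^{2}-r^{2}d)^{2}+\omega^{2}}{r\rho^{2}}.
\]
The bracket on the left is strictly positive because $h_{0}$ is convex, while the right-hand side is strictly positive because $\omega>0$; hence $\dot d>0$ on $(a,b)$.

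For $z$ I would differentiate directly. Differentiating $r\rho^{2}\dot\psi=\omega$ gives $\ddot\psi=-\tfrac{\dot\psi}{r}-\tfrac{2\dot\rho\dot\psi}{\rho}$, and the radial equation above supplies $\ddot\rho$. Substituting both, together with $f'(d)\dot d=d\,h_{0}''(d)\dot d$, into $\dot z=\dot\rho\ddot\rho+\rho\dot\rho\dot\psi^{2}+\rho^{2}\dot\psi\ddot\psi+\tfrac{\rho\dot\rho}{r^{2}}-\tfrac{\rho^{2}}{r^{3}}+f'(d)\dot d$, the terms in $\rho\dot\rho\dot\psi^{2}$ cancel, and---the step that makes the lemma work---the two contributions carrying $h_{0}''(d)\dot d$ combine into $h_{0}''(d)\dot d\bigl(d-\tfrac{\rho\dot\rho}{r}\bigr)=0$, precisely because $d=\tfrac{\rho\dot\rho}{r}$. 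What survives regroups as
\[
\dot z=-\frac{1}{r}\left(\dot\rho-\frac{\rho}{r}\right)^{2}-\frac{\rho^{2}\dot\psi^{2}}{r},
\]
which is strictly negative since $\dot\psi=\tfrac{\omega}{r\rho^{2}}\neq0$; thus $z$ is strictly decreasing.

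The main difficulty is purely computational: arranging the eliminations so that the right-hand side of the $d$-identity appears as the sum of squares $(\rho^{2}-r^{2}d)^{2}+\omega^{2}$, and so that the $h_{0}''$ terms in $\dot z$ annihilate each other. A secondary point I must settle is that $d>0$ on all of $(a,b)$, which is needed for $h_{0}'(d)$ and $h_{0}''(d)$ to be meaningful. Here I would argue that finite energy forces $d>0$ a.e.\ (because $h_{0}(0)=+\infty$) and $d\geq0$ everywhere by continuity of $\dot\rho$; the $d$-identity then shows $\dot d>0$ on every subinterval where $d>0$, so $d$ can never return to $0$, and hence $\{d=0\}$ cannot be squeezed between two points where $d>0$. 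Since $d\equiv0$ on an interval is excluded by finiteness of $I_{0}(u_{\ast}^{N})$, it follows that $d>0$ throughout $(a,b)$, completing both monotonicity claims.
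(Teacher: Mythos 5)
Your proposal is correct and takes essentially the same route as the paper: rotational symmetry reduces $d$ and $z$ to functions of $r$, and the Euler--Lagrange system \eqref{Ann:ELequation} produces exactly the same sum-of-squares expressions --- your identity $\bigl(r^2+\rho^2 h_0''(d)\bigr)\dot d = r\bigl(\dot\rho-\tfrac{\rho}{r}\bigr)^2+\tfrac{\omega^2}{r\rho^2}$ is the paper's contradiction argument (where $\dot d\le 0$ is shown to force $-\bigl(\dot\rho-\tfrac{\rho}{r}\bigr)^2\ge\tfrac{\omega^2}{r^2\rho^2}$) run forwards, and your formula $\dot z=-\tfrac{1}{r}\bigl(\dot\rho-\tfrac{\rho}{r}\bigr)^2-\tfrac{\rho^2\dot\psi^2}{r}$ coincides with the paper's. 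The only difference is your explicit closing argument that $d>0$ throughout $(a,b)$, a point the paper instead delegates to the regularity results of Post and Sivaloganathan invoked just before the lemma.
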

\begin{proof}
A direct calculation using the form of the solution $u_{\ast}^{N}$ shows that $d=\frac{\rho \dot{\rho}}{r}$, which is clearly independent of the angular variable $\theta$.  The same us true of $z$, as is shown in \eqref{Ann:z} below.  From the remarks above (concerning the regularity of $\rho$, essentially applying \cite{PostSivaloganathan97:HomotopyConditionsExistenceMultipleEquilibria}) the quantities $d$ and $z$ are differentiable.  Now assume for a contradiction that $\dot{d} \leq  0$.
	Then 
	\begin{align}
		\ddot{\rho } \leq  \frac{1}{\rho }\left( d - \dot{\rho }^2 \right).
		\label{Ann:ddrhoEstimate}
	\end{align}
The Euler-Lagrange equations \eqref{Ann:ELequation} are equivalent to 
	\begin{align}
		\ddot{\rho }\left( r + \frac{\rho ^2}{r}h_0''(d) \right) = \frac{1}{r}\left( \rho  + \frac{\omega ^2}{\rho ^3} \right) - \dot{\rho } + \frac{\rho }{r}\left( d - \dot{\rho }^2 \right)h_0''(d).
	\end{align}
	The factor $r+\frac{\rho ^2}{r}h_0''(d)$ is always positive, so we can use \eqref{Ann:ddrhoEstimate} on the left-hand side to obtain
	\begin{align*}
		\dot{\rho } + \frac{r}{\rho }\left( d - \dot{\rho }^2 \right) \geq  \frac{1}{r}\left( \rho  + \frac{\omega ^2}{\rho ^3} \right)
	\end{align*}
	Multiplying this through by $\frac{\rho }{r}$ we deduce that
	\begin{align*}
		- \left( \dot{\rho } - \frac{\rho }{r} \right)^2 \geq  \frac{\omega ^2}{r^2\rho ^2},
	\end{align*}
	which is impossible since $\omega ≠0$.
	\par
	For $z$ we have, by direct calculation, 
	\begin{align}
		z &= \frac{1}{2}\left( \dot{\rho }^2 + \rho ^2\dot{ψ}^2 + \frac{\rho ^2}{r^2} \right) + f(d) = \frac{1}{2}\left( \dot{\rho }^2 + \frac{\omega ^2}{r^2\rho ^2} - \frac{\rho ^2}{r^2} \right) + f(d) + \frac{\rho ^2}{r^2}
		\label{Ann:z}
	\end{align}
	Differentiating and using \eqref{Ann:ELequation} we find
	\begin{align*}
		\dot{z} &= -\frac{1}{r}\left( \dot{\rho }^2 + \frac{\omega ^2}{r^2\rho ^2} - \frac{\rho ^2}{r^2} \right) + 2\frac{\rho \dot{\rho }}{r^2} - 2\frac{\rho ^2}{r^3} \\
		&= -\frac{1}{r} \left( \dot{\rho }^2 + \frac{\omega ^2}{r^2\rho ^2} + \frac{\rho ^2}{r^2} - 2\frac{\rho \dot{\rho }}{r} \right)
		= -\frac{1}{r} \left( \left( \dot{\rho } - \frac{\rho }{r} \right)^2 + \frac{\omega ^2}{r^2\rho ^2} \right) < 0.
	\end{align*}
\end{proof}
Now we are in the position to prove the following result, which asserts that 
$\det \nabla u_{\ast}^{N}$ is bounded strictly away from $0$ on $A$.  
\begin{lemma} Let $N\in \mathbb{N}$ and let $u_{\ast}^{N}$ minimize $I_0$ in $\tilde{\sca}_{{N}, \,\ssym}$.  Then if $u_{\ast}^{N}$ is expressed in the form
\begin{align*}
	u_{\ast}^{N}(r,\theta) = \rho (r) e_r( \theta + \psi(r))
	\end{align*}
it holds that $\dot{\rho }\in C([a,b])$ with $\dot{\rho }(a) > 0$ and $\dot{\rho }(b) < \infty $.
	\label{th:Ann:drhoAtBoundary}
\end{lemma}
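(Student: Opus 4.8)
The plan is to read everything off the two monotonicity properties proved in Lemma~\ref{lem:monotonedz}, namely that $d:=\det\nabla u_{\ast}^{N}=\tfrac{\rho\dot\rho}{r}$ is strictly increasing and $z$ strictly decreasing on $(a,b)$. Since $\rho\in C([a,b])\cap C^{3}(a,b)$ with $\rho(a)=a$ and $\rho(b)=b$, the quotient $\dot\rho=\tfrac{rd}{\rho}$ is continuous on $(a,b)$, and the monotonicity of $d$ guarantees that the one-sided limits $d(a^{+})$ and $d(b^{-})$ exist in $[0,+\infty]$. Because $r,\rho\to a$ as $r\to a^{+}$ and $r,\rho\to b$ as $r\to b^{-}$, the same limits control $\dot\rho$: one has $\dot\rho(a^{+})=d(a^{+})$ and $\dot\rho(b^{-})=d(b^{-})$. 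Consequently $\dot\rho$ extends continuously to $[a,b]$ the moment these limits are finite, and the whole statement reduces to the two assertions $d(a^{+})>0$ and $d(b^{-})<+\infty$. I would also note at the outset that $d>0$ throughout $(a,b)$, since $d$ is continuous, nonnegative (finite energy forbids $\det\nabla u_{\ast}^{N}\le0$), and strictly increasing; this ensures $z$ is finite at every interior point.

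For the lower bound at $r=a$ I would argue by contradiction, supposing $d(a^{+})=0$. Using the expression for $z$ in \eqref{Ann:z}, that is $z=\tfrac12\dot\rho^{2}+\tfrac{\omega^{2}}{2r^{2}\rho^{2}}+\tfrac{\rho^{2}}{2r^{2}}+f(d)$, the first three terms tend to finite limits as $r\to a^{+}$ (here $\dot\rho(a^{+})=d(a^{+})=0$ and $\rho\to a$). On the other hand, a short computation with hypothesis (H2) gives $f(d)=dh_{0}'(d)-h_{0}(d)\le -2c_{1}d^{-s}$ for small $d$, with $s>0$, so that $f(d)\to-\infty$ as $d\to0^{+}$ and hence $z(r)\to-\infty$. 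This contradicts the fact that the decreasing function $z$ satisfies $\lim_{r\to a^{+}}z(r)=\sup_{(a,b)}z\ge z(r_{0})>-\infty$ for any fixed interior point $r_{0}$. Therefore $d(a^{+})>0$, and $\dot\rho(a)=d(a^{+})>0$.

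For finiteness at $r=b$ I would again argue by contradiction, supposing $d(b^{-})=+\infty$. Near $b$ the terms $\tfrac{\omega^{2}}{2r^{2}\rho^{2}}$ and $\tfrac{\rho^{2}}{2r^{2}}$ stay bounded, while $\dot\rho^{2}=\tfrac{r^{2}d^{2}}{\rho^{2}}\to+\infty$ because $r/\rho\to1$. Hypothesis (H4) gives $f'(d)=dh_{0}''(d)>0$ for $d\ge d_{1}$, so $f$ is bounded below on $[d_{1},+\infty)$; hence $\tfrac12\dot\rho^{2}+f(d)\to+\infty$ and therefore $z(r)\to+\infty$ as $r\to b^{-}$. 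This contradicts $\lim_{r\to b^{-}}z(r)=\inf_{(a,b)}z\le z(r_{0})<+\infty$, again a consequence of $z$ being decreasing. Hence $d(b^{-})<+\infty$ and $\dot\rho(b)=d(b^{-})<\infty$. Together with the first paragraph this yields $\dot\rho\in C([a,b])$ with $\dot\rho(a)>0$ and $\dot\rho(b)<\infty$.

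The only step requiring genuine analysis rather than bookkeeping is the asymptotic behaviour of $f$ at the two ends: extracting the divergence $f(d)\to-\infty$ as $d\to0^{+}$ from the pointwise bounds in (H2), and the coercivity of $\tfrac12 d^{2}+f(d)$ as $d\to\infty$ from the convexity estimate (H4), all the while checking that the purely geometric terms $\tfrac{\omega^{2}}{2r^{2}\rho^{2}}$ and $\tfrac{\rho^{2}}{2r^{2}}$ remain bounded at each endpoint. Once this is in hand, the strict monotonicity of $d$ and $z$ from Lemma~\ref{lem:monotonedz} does the rest.
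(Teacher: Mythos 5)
Your proof is correct and follows essentially the same route as the paper: both deduce from Lemma~\ref{lem:monotonedz} that the one-sided limits of $d$ (hence of $\dot\rho$) exist, and then rule out $\dot\rho(a+)=0$ and $\dot\rho(b-)=+\infty$ by contradiction, using that $f(d)\to-\infty$ as $d\to0^{+}$ (resp.\ the coercivity of $\tfrac12\dot\rho^{2}+f(d)$ for large $d$) would force $z\to\mp\infty$, contradicting that $z$ is decreasing and finite on $(a,b)$. Your write-up merely makes explicit, via (H2) and (H4), the endpoint asymptotics of $f$ that the paper leaves as ``similar'' for the case $r\to b^{-}$.
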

\begin{proof}
	Since $d$ is monotonic on $(a,b)$, the limits $\lim_{r\to a+}d(r)$ and $\lim_{r \to b-} d(r)$ exist (possibly $+\infty $ for $r→b-$).
	Therefore, the limits $\lim_{r \to a+}\dot{\rho }(r)$ and $\lim_{r \to b-}\dot{\rho }(r)$ also exist, with the same qualification for the case $r \to b-$.  
If $\dot{\rho }(r)$ were to vanish as $r \to a+$ then we would have $d \to 0+$ as $r \to a+$, and hence $f(d)=dh_0'(d) - h_0(d)$ would tend to $-\infty$ as $r \to a+$.  Recalling \eqref{Ann:z}, it follows that $z(r) \to -\infty$ as $r \to a+$.    On the other hand, $z(r)$ is decreasing on $(a,b)$ and certainly finite on that interval, implying in particular that $\lim_{r \to a+} z(r)$ is not $-\infty$, which is a contradiction.   Hence $\dot{\rho}(a+)$ is strictly positive.  The argument needed to show that  $\dot{\rho }(b-) < \infty $ is similar.
\end{proof}

We remark, in passing, that we are able to derive the following maximum principle.

\begin{theorem}
	Let $u_{\ast}^{N}$ satisfy the hypotheses of Lemma \ref{th:Ann:drhoAtBoundary}.  Then the function $\frac{\rho }{r}$ attains no interior local maximum.  In particular, $\left( \frac{\rho }{r} \right)^{\Cdot\Cdot}$ changes sign only once and $\frac{a}{b} \leq  \frac{\rho }{r} < 1$ in $(a,b)$ with $\frac{\rho }{r}=1$ at $a,b$.
	\label{th:MaximumPrinciple_rhoOverR}
\end{theorem}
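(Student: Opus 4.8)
The plan is to study the scalar profile $w := \rho/r$ directly. The boundary data give $w(a)=w(b)=1$, and by the regularity already recorded (namely $\rho \in C([a,b])\cap C^3(a,b)$) we have $w \in C([a,b])\cap C^3(a,b)$. Everything will flow from one identity. Since $d = \det\nabla u_{\ast}^{N} = \rho\dot{\rho}/r$ and $\rho = rw$, $\dot{\rho} = w + r\dot{w}$, one has $d = w^2 + rw\dot{w}$; differentiating and rearranging gives
\begin{align}
rw\,\ddot{w} = \dot{d} - \dot{w}\,(3w + r\dot{w}),\qquad 3w + r\dot{w} = 2w + \dot{\rho}.
\label{plankey}
\end{align}
The two facts I will use repeatedly are that $\dot{d}>0$ on $(a,b)$ (Lemma~\ref{th:Ann:dIncreasingZDecreasing}) and that $\dot{\rho}>0$ (because $d=\rho\dot{\rho}/r$ is strictly positive), so that in particular $3w+r\dot{w} = 2w+\dot{\rho}>0$.

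The principal assertion — no interior local maximum — is now immediate. At any interior critical point of $w$ we have $\dot{w}=0$, so \eqref{plankey} reduces to $rw\ddot{w}=\dot{d}>0$, forcing $\ddot{w}>0$; thus every interior critical point is a strict local minimum and none can be a maximum. Moreover $w$ is not constant, since a constant $w$ would be $\equiv 1$ and give $d\equiv 1$, contradicting the strict monotonicity of $d$. Rolle's theorem (using $w(a)=w(b)$) then provides at least one interior critical point, while the strict-minimum property permits at most one; hence there is a unique $r_0\in(a,b)$ with $\dot{w}(r_0)=0$, and it is the global minimum. In particular $w<1$ on $(a,b)$ with equality only at $a,b$. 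The lower bound is separate and elementary: $\dot{\rho}>0$ gives $\rho(r)\ge\rho(a)=a$, and since $r\le b$ we obtain $w=\rho/r\ge a/b$.

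For the claim that $(\rho/r)^{\Cdot\Cdot}$ changes sign only once I would first localise the possible sign changes. On the set $\{\dot{w}\le 0\}$ — which by the previous paragraph is exactly $(a,r_0]$ — the right-hand side of \eqref{plankey} equals $\dot{d} - \dot{w}(2w+\dot{\rho})$, a sum of $\dot{d}>0$ and the nonnegative term $-\dot{w}(2w+\dot{\rho})$; hence $\ddot{w}>0$ there and $w$ is strictly convex on $(a,r_0]$. Every sign change of $\ddot{w}$ is therefore confined to $(r_0,b)$, where $\dot{w}>0$. On that interval the right-hand side of \eqref{plankey} is a downward-opening quadratic in $\dot{w}$ with strictly positive constant term $\dot{d}$, so $\ddot{w}$ is positive while $\dot{w}$ stays below the (moving) positive root and negative once $\dot{w}$ exceeds it; since $\dot{w}(r_0)=0$ we begin in the convex regime, and it remains only to show that $\dot{w}$ overtakes this threshold exactly once.

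To finish I would use the standard inflection-point argument: writing $\Theta := rw\ddot{w} = \dot{d} - 3w\dot{w} - r\dot{w}^2$, a short computation shows that at any point $r_1\in(r_0,b)$ where $\ddot{w}=0$ one has $\dot{\Theta}(r_1) = \ddot{d}(r_1) - 4\dot{w}(r_1)^2$. If this were strictly negative at every such $r_1$, then $\Theta$ would be strictly decreasing through each of its zeros, so it could vanish at most once, giving exactly one sign change of $\ddot{w}$ and completing the proof. This is the step I expect to be the main obstacle: it requires the one-sided estimate $\ddot{d}<4\dot{w}^2$ at inflection points, i.e. control on the \emph{second} derivative of the Jacobian. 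Lemma~\ref{th:Ann:dIncreasingZDecreasing} supplies only $\dot{d}>0$, and neither the convexity of $d$ nor any bound on $\ddot{d}$ follows directly from (H1)--(H4); extracting the required bound (perhaps by differentiating the Euler--Lagrange equation \eqref{Ann:ELequation} once more, or by exploiting the monotonicity of $z$) is the crux that separates the sharp single-inflection statement from the comparatively soft `no interior maximum' conclusion.
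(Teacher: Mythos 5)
Your core argument is, in substance, identical to the paper's own proof. The paper's entire proof runs as follows: assume an interior point with $(\rho/r)^{\Cdot}=0$ and $(\rho/r)^{\Cdot\Cdot}\le 0$; at such a point $(\rho/r)^{\Cdot\Cdot}=\ddot{\rho}/r$, while $\dot{d}=\tfrac{\rho}{r}\ddot{\rho}+\dot{\rho}\,(\rho/r)^{\Cdot}=\tfrac{\rho}{r}\ddot{\rho}$, so the strict positivity of $\dot{d}$ from Lemma~\ref{th:Ann:dIncreasingZDecreasing} forces $\ddot{\rho}>0$, a contradiction. Your identity $rw\ddot{w}=\dot{d}-\dot{w}(3w+r\dot{w})$ is the same computation packaged once and for all, and your deduction at a critical point ($rw\ddot{w}=\dot{d}>0$) is exactly the paper's contradiction.

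Where the comparison becomes interesting is everything after that. The paper's proof \emph{stops} at the contradiction: the ``in particular'' clauses (uniqueness of the interior minimum, the bounds $a/b\le\rho/r<1$, and the single sign change of $(\rho/r)^{\Cdot\Cdot}$) are asserted as consequences with no further argument. Your Rolle-plus-strict-minimum argument and the elementary bound via $\dot{\rho}>0$ correctly fill in the first two of these. As for the clause you could not prove: your diagnosis is accurate, and it is not a defect of your reconstruction relative to the paper, because the paper contains no argument for it either. Indeed, the no-interior-maximum property alone cannot yield the single-inflection statement --- a profile with $\ddot{w}\ge 0$ throughout $(a,b)$, equal to $1$ at both ends with one interior minimum, is consistent with everything actually proved, and for such a profile $\ddot{w}$ never changes sign at all. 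Your computation $\dot{\Theta}=\ddot{d}-4\dot{w}^2$ at inflection points pinpoints precisely the kind of one-sided control on $\ddot{d}$ that would be needed and that neither (H1)--(H4) nor the monotonicity of $d$ and $z$ supplies directly. So your proposal proves everything the paper's proof proves, by the same mechanism, and is more candid about what the ``in particular'' refinement would actually require.
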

\begin{proof}
	Assume there exists an $r\in (a,b)$ s.t. $\left( \frac{\rho }{r} \right)^{\Cdot[1.5]} = 0$ and $\left( \frac{\rho }{r} \right)^{\Cdot\Cdot} \leq  0$.
	Then 
	\begin{align}
		0 \geq  \left( \frac{\rho }{r} \right)^{\Cdot\Cdot} = - \frac{2}{r}\left( \frac{\rho }{r} \right)^{\Cdot} + \frac{1}{r}\ddot{\rho } = \frac{1}{r}\ddot{\rho }.
		\label{Ann:ddrhoContradiction1}
	\end{align}
	However, by Theorem~\ref{th:Ann:dIncreasingZDecreasing} we have
	\begin{align}
		0 &< \dot{d} = \frac{1}{r}\left( \dot{\rho }^2 + \rho \ddot{\rho } - d \right) = \frac{\rho }{r}\ddot{\rho } + \dot{\rho }\left( \frac{\rho }{r} \right)^{\Cdot} = \frac{\rho }{r}\ddot{\rho }
		\label{Ann:ddrhoContradiction2}
	\end{align}
	which contradicts \eqref{Ann:ddrhoContradiction1}.
\end{proof}

\section{Shear maps}
\label{She:sect}

In this section we focus on so-called shear maps and their properties.  In brief, for any given domain $D \subset \R^{n}$ a shear map $u_{\sigma} : D \to \R^n$ takes the form 
\[ \us(x) = x + \sigma(x) e,\]
where $e$ is a fixed unit vector in $\R^n$ and the function $\sigma$ is real-valued.  
We echo some of the constructions of Section \ref{sectiontwo} 
by posing and then solving variational problems first in the case that the weak constraint $\det \nabla \us \geq 0$ is required to hold, that is when $h=h_{\infty}$, and then in the case that compression to zero `area' is energetically penalized, corresponding to $h=h_0$.  In the former case, and still in a two dimensional setting, we find conditions which imply that the unique minimizer of a Dirichlet energy among shear maps necessarily satisfies $\det \nabla \us = 0$ on a specified subdomain.  (Cf. Section \ref{twistwithhinfinity} and the `hedgehog map'.)  Moreover, we establish conditions under which the global energy minimizer fails to be $C^1$ at interior points of the domain.   The conditions are based on easily verifiable boundary behaviours of functions harmonic on certain subdomains of $D$.  See Section \ref{ss1} for details.

Where the stronger constraint $\det \nabla \us > 0$ a.e.\@ is required to hold, via $I_0(\us) < +\infty$, we find that even if compression is strongly energetically penalized\footnote{This is achieved by requiring in addition that $\det \nabla \us \geq c > 0$ a.e.\@ in the domain.}, circumstances arise in which the unique energy minimizing shear map fails to be $C^1$.  In this case the gradient is discontinuous `at' certain boundary points.  See Section \ref{ss2} for details.

Our chief ally in proving these assertions is the fact that the Jacobian of any shear map $\us$ is linear in $\nabla \sigma$, viz.
\[ \det \nabla \us = 1+e \cdot \nabla \sigma.\]
Consequently, the Jacobian of a convex combination of any two shear maps $u_{\sigma_1}$ and $u_{\sigma_{2}}$ satisfies
\[ \det \nabla u_{\lambda \sigma_1+(1-\lambda) \sigma_2} = \lambda \det \nabla u_{\sigma_1} + (1-\lambda) \det \nabla u_{\sigma_2}, \]
where $0 \leq \lambda \leq 1$.  In particular, it follows that if the maps $u_{\sigma_i}$ obey the constraint $\det \nabla u_{\sigma_i} \geq 0$ for $i=1,2$ then any convex combination must also obey that constraint.   Moreover,   inserting $F= \nabla \us$ into the general form stored-energy function $W(F)=\frac{1}{2}|F|^2+h_{0}(\det F)$, we find that
\[ W(\nabla \us) = \frac{1}{2}|\1 + e \otimes \nabla \sigma|^2 + h_0(1+e \cdot \nabla \sigma) \]
is in fact \emph{convex} in $\nabla \sigma$.   This convexity turns out to be useful in both the weak and strong constraint cases (corresponding, respectively to the choice $h=h_{\infty}$ and $h=h_0$).  When the weaker constraint $\det \nabla \sigma \geq 0$ a.e.\@ holds, it means that all we need do to establish that a given admissible map is a minimizer is to prove that it is a solution of a variational inequality associated with the energy functional 
\begin{equation}\label{She:iwdef} I_{w}(\sigma) : = \int_{\om}  |\nabla \us|^2 \,dx, \end{equation}
whereas when the strong constraint $\det \nabla \us > 0$ is in force the convexity of $W(\nabla \us)$ in $\nabla \sigma$ allows us to apply elliptic regularity theory under certain conditions, an important intermediate step in determining the behaviour of  $\nabla \us$ near the boundary.  

\subsection{The case $h=h_{\infty}$: shear minimizers without area compression energy}\label{ss1}

For definiteness, we now restrict attention to shear maps applied to the square $Q=[-1,1]^2$ in two dimensions, and we define
\begin{equation}\label{She:genshear} \us(x) = x+ \sigma(x) e_2 \quad \textrm{if} \ x \in Q, \end{equation}
where $e_{2}= (0,1)$.  Define 
\begin{align}\label{She:sigmazero}
\sigma_0(x_1,x_2) & =\left\{\begin{array}{l l} 0 & \textrm{if } \ -1<x_1 \leq 0  \\
-2x_1 x_2 & \textrm{if } \ 0<x_1< \frac{1}{2} \\
-x_2 & \textrm{if } \frac{1}{2} \leq x_1 < 1.\end{array}\right.
\end{align}
Formally speaking, the effect of $u_{\sigma_0}$ is to project the region $P:=\{x \in Q: \ \frac{1}{2} \leq x_1 < 1\}$ onto that part of the $x_1$ axis which it contains.   At the same time, $u_{\sigma_0}$ acts as the identity map on the region $M:=\{x \in Q: \ -1 < x_1 \leq 0\}$.   In the region $N:=\{x \in Q: \ 0 < x_1 < \frac{1}{2}\}$ the map  $u_{\sigma_0}$ brings about a narrowing (in the $x_2$-direction) of $Q$.  Note that, in this notation, $Q=M \cup N \cup P$. Figure \ref{f1} below illustrates both the subdivision of $Q$ and the effect that (a slightly smoothed version of) $u_{\sigma_0}$ has on $Q$.

%It is convenient to smooth the boundary condition $\tilde{\sigma}_0$ away from the region $P$.   To that end, we let $\rho_{\delta}$ be a standard, radial mollifier and suppose $0 < \delta < 1/8$.   Define
%\begin{displaymath} \sigma_0(x) = \left\{ \begin{array}{l l} (\rho_{\delta} \ast \tilde{\sigma}_{0})(x) & \textrm{ if }  x \in Q,  x_1 < 1/4 \\
%\tilde{\sigma}_{0}(x) &  \textrm{ if }  x \in Q,  x_1 \geq 1/4, \end{array}\right.
%\end{displaymath}
%where, for the purposes of mollification, $\tilde{\sigma}_0$ is understood to be defined on the set $\{x \in \R^2: |x_1|<1, |x_2| < 2\}$.   Note that $\tilde{\sigma}_0$ is harmonic on each of the regions $M$ and $N$, and therefore, since $\rho_\delta$ is a radial mollifier,  $\sigma_0(x)= \tilde{\sigma}_0(x)$ provided $\dist(x, \partial M \cap \partial N) \geq \delta$.   (See \cite[Theorem 2, Chapter 6]{EvPDE}, for example.)    

\begin{figure}[ht]
\psfragscanon
\psfrag{a}{$x_1$}
\psfrag{b}{$x_2$}
\psfrag{M}{$M$}
%\psfrag{N}[r]{$N$}
\psfrag{N}{$N$}
\psfrag{P}{$P$}
\psfrag{c}{$u_{\sigma_0}$} 
\psfrag{g}{$\frac{1}{2}$}
\psfrag{h}{$1$}
\psfrag{i}{$\delta$}
\psfrag{j}{-$\delta$}
\centering
\includegraphics[width=0.8\textwidth]{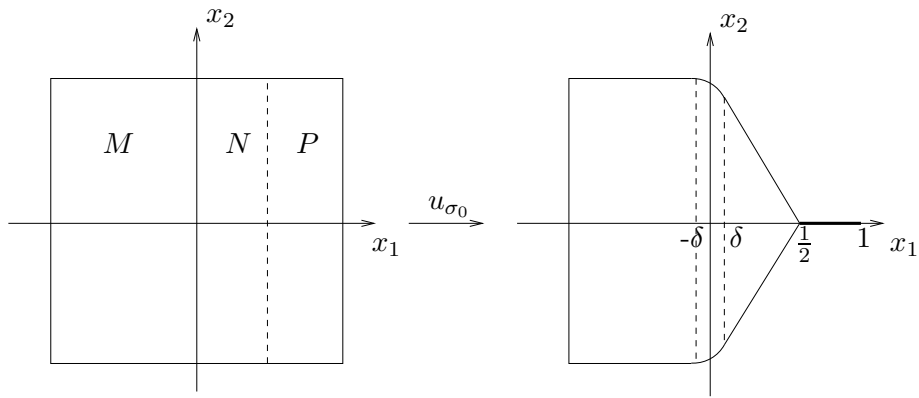}
\caption{The boundary $\partial Q$ is subjected to the displacement $u_{\sigma_0}$.  The regions $N$ and $P$ correspond respectively to `narrowing' and `pinching' respectively.}
\label{f1}
\end{figure} 

We remark that the procedure described below easily adapts to more general boundary conditions than $\sigma_0$:  we use $\sigma_0$ mainly as a convenient means of illustration.   Now define the class of admissible shear maps in the weak constraint case by
\begin{equation}\label{She:defscaw} \sca_{w} = \{ \sigma \in W^{1,2}(Q; \R): \ \us= u_{\sigma_0} \ \textrm{on } \partial Q, \ \det \nabla \us \geq 0 \ \textrm{a.e.} \ \textrm{in }Q \}.\end{equation}
 Here, the boundary conditions are meant in the sense of trace.  

\begin{lemma}\label{lem:eleq} Let $I_w$ and $\sca_w$ be defined by \eqref{She:iwdef}, \eqref{She:defscaw} respectively.  Then $I_w$ has a unique global minimizer in $\sca_w$.  In particular, the global minimizer $\sigma_{w}$ of $I_{w}$ in $\sca_w$ satisfies the inequality
\begin{equation}\label{She:elweak} \int_{Q} \nabla \sigma \cdot \nabla \eta \,dx \geq 0 \end{equation} 
for all $\eta \in W^{1,2}(Q;\R)$ such that $\sigma+\eta$ belongs to $\sca_w$.  
\end{lemma}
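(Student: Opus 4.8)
The plan is to exploit the two identities recorded just before the statement, namely $\det \nabla \us = 1 + e_2 \cdot \nabla \sigma = 1 + \sigma_{,_{x_2}}$ together with the elementary expansion $|\nabla \us|^2 = |\1 + e_2 \otimes \nabla \sigma|^2 = 2 + 2\sigma_{,_{x_2}} + |\nabla \sigma|^2$, which simultaneously linearise the constraint and expose the leading behaviour of the energy. Inserting the expansion gives
\[ I_w(\sigma) = 2|Q| + 2\int_Q \sigma_{,_{x_2}} \d x + \int_Q |\nabla \sigma|^2 \d x. \]
Since every $\sigma \in \sca_w$ carries the same boundary trace $\sigma_0$, the divergence theorem shows that the middle term equals $2\int_{\partial Q} \sigma\,(e_2 \cdot n)\d \sch^1$, the same constant for all admissible $\sigma$. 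Consequently, minimizing $I_w$ over $\sca_w$ is equivalent to minimizing the Dirichlet integral $\int_Q |\nabla \sigma|^2 \d x$ over $\sca_w$, and this is the observation that drives everything that follows.

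First I would pin down the structure of $\sca_w$. It is \emph{nonempty} because $\sigma_0$ itself is admissible: a direct check gives $\det \nabla u_{\sigma_0}$ equal to $1$ on $M$, to $1 - 2x_1 > 0$ on $N$, and to $0$ on $P$, so the constraint holds a.e., and $\sigma_0$ is continuous and piecewise smooth, hence lies in $W^{1,2}(Q)$. It is \emph{convex} because the trace condition is affine and, crucially, the constraint $\det \nabla \us \geq 0$ reads $\sigma_{,_{x_2}} \geq -1$ a.e., which is convex precisely because $\det \nabla \us$ is affine in $\nabla \sigma$ (exactly as already noted in the text). It is \emph{weakly closed} because the trace is weakly continuous on $W^{1,2}(Q)$ and because the set $\{ g \in L^2(Q) : g \geq -1 \text{ a.e.}\}$, being convex and strongly closed, is weakly closed, so the continuous linear map $\sigma \mapsto \sigma_{,_{x_2}}$ respects weak limits. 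Unlike in Proposition \ref{sf1}, no appeal to \cite{Muller:LlogL} is needed here: the linearity of $\det \nabla \us$ in $\nabla \sigma$ does the work.

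With these three properties in hand, existence follows by the direct method. Minimizing sequences are bounded in $W^{1,2}(Q)$ by the Poincaré inequality applied to $\sigma - \sigma_0 \in W^{1,2}_0(Q)$; the Dirichlet integral is sequentially weakly lower semicontinuous by convexity; and $\sca_w$ is weakly closed. Uniqueness is then immediate from strict convexity: if $\sigma_1, \sigma_2$ are both minimizers, their midpoint lies in $\sca_w$ by convexity, and strict convexity of $p \mapsto |p|^2$ forces $\nabla \sigma_1 = \nabla \sigma_2$ a.e., whence $\sigma_1 = \sigma_2$ because the two share a boundary trace.

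Finally, for the variational inequality I would fix $\eta$ with $\sigma_w + \eta \in \sca_w$ and observe that, both $\sigma_w$ and $\sigma_w + \eta$ having trace $\sigma_0$, we have $\eta \in W^{1,2}_0(Q)$. Convexity of $\sca_w$ gives $\sigma_w + t\eta \in \sca_w$ for $t \in [0,1]$, so the scalar map $t \mapsto I_w(\sigma_w + t\eta)$ — a quadratic polynomial in $t$ — is minimized at $t = 0$ and hence has nonnegative right derivative there. Differentiating the expansion above yields $\left.\frac{d}{dt}\right|_{t=0^+} I_w(\sigma_w + t\eta) = 2\int_Q \eta_{,_{x_2}} \d x + 2\int_Q \nabla \sigma_w \cdot \nabla \eta \d x$, and the first integral vanishes because $\eta \in W^{1,2}_0(Q)$. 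Dividing by $2$ gives \eqref{She:elweak}. I would flag the bookkeeping of the linear term $\int_Q \sigma_{,_{x_2}} \d x$ as the one point needing genuine care rather than a serious obstacle: it must be seen to be constant on $\sca_w$ (so that it can be dropped when reducing to the Dirichlet integral) and to vanish for admissible variations $\eta$ (so that the inequality takes the clean homogeneous form stated).
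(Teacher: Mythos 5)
Your proposal is correct and follows essentially the same route as the paper: direct method (nonemptiness via $\sigma_0$, weak closedness of $\sca_w$, convexity for lower semicontinuity), uniqueness from strict convexity of the functional together with convexity of the admissible class, and the variational inequality from the convex-combination trick $\sigma_w + t\eta = t(\sigma_w+\eta) + (1-t)\sigma_w$ with the linear term $\int_Q \eta_{,_{x_2}}\,dx$ killed by the zero trace of $\eta$. Your explicit expansion of $I_w$ into $2|Q| + 2\int_Q \sigma_{,_{x_2}}\,dx + \int_Q|\nabla\sigma|^2\,dx$ and the observation that the middle term is constant on $\sca_w$ is a slightly tidier packaging of the same computation the paper performs at the level of the first variation, not a different argument.
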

\begin{proof} To prove the first assertions of the lemma it suffices to show that $\sca_w$ is nonempty and closed under weak convergence in $W^{1,2}(Q,\R)$ and then to apply the direct method of the calculus of variations.  

A short calculation shows that 
\begin{displaymath}\det \nabla u_{\tilde{\sigma}_0}(x) = \left\{\begin{array}{l l} 1 & \textrm{if } x \in M \\
1-2x_1  & \textrm{if } x \in N \\
0  & \textrm{if } x \in P,\end{array}\right. \end{displaymath} 
where $M$, $N$ and $P$ are as defined above.  Since $0 \leq \det \nabla u_{\tilde{\sigma}_0}=1+\tilde{\sigma}_{0,_2}$ a.e.\@, it follows from standard properties of mollifiers that $1+\sigma_{0,2} \geq 0$ a.e.\@ in $Q$.  Therefore $\det \nabla \sigma_0 \geq 0$ a.e.\@ in $Q$, and so $\sigma_0$ is admissible.  In particular, $\sca_w$ is nonempty.

Now let $\sigma^{(j)}$ be a sequence in $\sca_w$ converging weakly to $\sigma$.  Properties of the trace imply that $\sigma$ satisfies the same boundary conditions as all the $\sigma^{(j)}$, and since $\det \nabla u_{\sigma^{(j)}} = 1+\sigma^{(j)}_{,_{2}} \geq 0$ a.e.\@ in $Q$ for all $j$, it easily follows that $\det \nabla \us \geq 0$ a.e.\@ in $Q$ also.   Thus $\sca_w$ is weakly closed.  The convexity of $I_{w}$ with respect to $\sigma$ coupled with the direct method then yields the existence of $\sigma_w$ minimizing $I_w$ in $\sca_w$.  The minimizer is unique because the functional $I_w$ is strictly convex and the class $\sca_w$ is convex.   

We now prove that \eqref{She:elweak} is necessary and sufficient for $\sigma$ to minimize $I_{w}$ in $\sca_w$.   Let $\eta \in W^{1,2}(Q;\R)$ be such that $\sigma + \eta \in \sca_w$, and let $\sigma$ minimize $I_w$ in $\sca_w$.  Then by writing
\[\sigma + \eps \eta = \eps (\sigma + \eta) + (1-\eps) \sigma \]
and noting that the right-hand side clearly belongs to $\sca_w$ provided 
$0 \leq \eps \leq 1$,  it follows by minimality that $I(\sigma+\eps \eta) \geq I_w(\sigma)$ for all such $\eps$.  Now,
\[ I_w(\sigma) = \int_{Q} |\1 + e_2 \otimes (\nabla \sigma)|^2 \,dx, \]
so that 
\[ \partial_{\eps}\arrowvert_{\eps=0} I_{w}(\sigma+\eps \eta) \geq 0\]
yields the inequality
\begin{equation}\label{Shear:neccond1}\int_{Q} \eta_{,_2} + \nabla \sigma \cdot \nabla \eta \,dx \geq 0 \end{equation} 
for all such $\eta$.  Applying the boundary condition $\eta\arrowvert_{\partial Q}=0$ to this gives \eqref{She:elweak}.   Note that \eqref{She:elweak} is a sufficient condition for the minimality of $\sigma$ in $\sca_w$.     This follows immediately from the identity
\[ I_w(\sigma + \eta) = I_w(\sigma) + \int_{Q} |\nabla \eta|^2 + 2 \eta_{,_2} + 2 \nabla \sigma \cdot \nabla \eta \,dx.\]
\end{proof}

The next result shows that any element $\sigma$ of  $\sca_w$ satisfies $\det \nabla \us(x)=0$ a.e.\@ on $P$, which is in accordance with physical intuition where the region is severely `pinched'. 

\begin{lemma}\label{She:lemvanjac}  Let $\sigma$ belong to $\sca_w$.  Then 
\begin{equation}\label{She:box1}
\sigma_0(x_1,-1) -1- x_2 \leq \sigma(x) \leq \sigma_0(x_1,1) + 1 -x_2 
\end{equation}
for a.e.\@ $x$ in $Q$.   In particular, $\sigma(x) = -x_2$ for a.e.\@ $x$ in $P$, so that $\det \nabla \us = 0$ a.e.\@ on $P$.
\end{lemma}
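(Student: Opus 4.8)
The plan is to read off everything from the single constraint that defines $\sca_w$, namely $\det \nabla \us = 1 + \sigma_{,_2} \geq 0$ a.e.\@ in $Q$, which is the same as $\sigma_{,_2} \geq -1$ a.e. The useful reformulation is that for each fixed $x_1$ the slice $x_2 \mapsto \sigma(x_1,x_2) + x_2$ is (weakly) non-decreasing, since its distributional derivative $\sigma_{,_2} + 1$ is nonnegative. The required two-sided bound will then fall out of monotonicity together with the prescribed boundary values.

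First I would invoke the ACL characterisation of $W^{1,2}$ functions, exactly as elsewhere in the paper: for a.e.\@ $x_1 \in (-1,1)$ the slice $x_2 \mapsto \sigma(x_1,x_2)$ is absolutely continuous on $[-1,1]$ and its classical derivative agrees a.e.\@ with the weak derivative $\sigma_{,_2}$. Combining this with the constraint and Fubini's theorem, for a.e.\@ $x_1$ the function $g(x_2) := \sigma(x_1,x_2) + x_2$ satisfies $g' \geq 0$ a.e., hence is non-decreasing on $[-1,1]$. Monotonicity gives $g(-1) \leq g(x_2) \leq g(1)$, that is
\[ \sigma(x_1,-1) - 1 \leq \sigma(x_1,x_2) + x_2 \leq \sigma(x_1,1) + 1 \]
for every $x_2 \in [-1,1]$ and a.e.\@ $x_1$, which after rearranging is \eqref{She:box1} apart from identifying the endpoint terms with $\sigma_0$.

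That identification is the next step: since $\sigma \in \sca_w$ satisfies $\sigma = \sigma_0$ on $\partial Q$ in the sense of trace, the standard result linking the trace of a Sobolev function to the boundary limits of its ACL representative yields $\sigma(x_1,\pm 1) = \sigma_0(x_1,\pm 1)$ for a.e.\@ $x_1$. Substituting these into the displayed inequalities produces \eqref{She:box1} exactly. The only genuinely delicate point in the whole argument is this reconciliation of the trace boundary condition with the pointwise endpoint values of the ACL slices; the monotonicity estimate itself is routine.

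Finally, for the \emph{in particular} assertion I would specialise to $P$, where $\tfrac{1}{2} \leq x_1 < 1$ and hence $\sigma_0(x_1,\cdot) = -x_2$, so that $\sigma_0(x_1,-1) = 1$ and $\sigma_0(x_1,1) = -1$. The lower bound in \eqref{She:box1} becomes $1 - 1 - x_2 = -x_2$ and the upper bound becomes $-1 + 1 - x_2 = -x_2$, so the two bounds collapse and force $\sigma(x) = -x_2$ a.e.\@ on $P$. Consequently $\sigma_{,_2} = -1$ and $\det \nabla \us = 1 + \sigma_{,_2} = 0$ a.e.\@ on $P$, as claimed.
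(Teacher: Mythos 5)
Your proposal is correct and is essentially the paper's own argument: the paper integrates $\sigma_{,_2} \geq -1$ along vertical slices via the ACL property and applies the trace boundary condition at $x_2 = \pm 1$, which is exactly your monotonicity-of-$\sigma(x_1,x_2)+x_2$ reformulation, and the collapse of the two bounds on $P$ is handled identically. The only difference is presentational: you make explicit the reconciliation of the trace with the endpoint values of the ACL representative, which the paper passes over silently.
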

\begin{proof}  For a.e.\@ $x_1$ in $(-1,1)$ it holds that 
\[ \sigma(x_1,1)-\sigma(x_1,x_2) = \int_{x_2}^{1} \sigma_{,_{2}}(x_1,t)\,dt\]
for a.e.\@ $x_2$ in $(-1,1)$.  Applying the constraint $\sigma_{,_{2}} \geq -1$ and the boundary condition gives 
\[ \sigma(x) \leq \sigma_0(x_1,1)+1-x_2\] 
a.e.\@ $x$ in $Q$.   Arguing similarly, using the boundary condition at points of the form $(x_1,-1)$, we obtain the left-hand inequality in \eqref{She:box1}.   The last assertion of the lemma follows by observing that $\sigma_0(x_1,\pm 1) = \mp 1$ when $x_1 \in (\frac{1}{2},1)$.\end{proof}

There is an interesting and quite subtle interaction between the solution $\sigma(x)=-x_2$ on the region $P$ with its possible behaviour elsewhere on the domain.   This yields a test for whether the constraint $1+\sigma_{,_{2}}\geq 0$ a.e.\@ becomes an equality on a set of positive measure in the subdomain $Q \setminus P$.  In other words, it is possible to test whether $\det \nabla \us = 0$ holds on a set of positive measure \emph{away} from the pinched part $P$ of the domain $Q$, where, by Lemma \ref{She:lemvanjac}, the vanishing of the Jacobian is automatic for all competitors $\sigma$ in $\sca_w$.   

\begin{lemma}\label{lem:slopes} Let $\sigma$ minimize $I_w$ in $\sca_w$ and define 
\begin{align}\label{def:omega}\om:=Q \setminus P.\end{align}   Then at most one of 
\begin{itemize}\item[(i)] $\essinf \{1+\sigma_{,_{2}}(x): x \in U\} > 0$ for all $U \subset \om$ with $\meas{U} > 0$, and
\item[(ii)] $\int_{-1}^{1} \phi(x_2) \sigma_{,_{1}}(1/2,x_2)\,dx_2  = 0$ for all $\phi \in C_c^1((-1,1))$.
\end{itemize}
is true.
\end{lemma}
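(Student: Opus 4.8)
The plan is to argue by contradiction: assuming that \emph{both} (i) and (ii) hold, I will show that $\sigma = -x_2$ throughout $\om$, which is flatly incompatible with (i). The two driving observations are that (i) forces $\sigma$ to be harmonic on $\om$, while (ii), together with the identity $\sigma = -x_2$ on $P$ already furnished by Lemma \ref{She:lemvanjac}, supplies vanishing Cauchy data for $\sigma + x_2$ across the interface $L := \{x \in Q : x_1 = \tfrac12\}$ that separates $\om$ from $P$.

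First I would exploit condition (i). Taking $U = \om$ in (i) yields a constant $c>0$ with $1 + \sigma_{,_2} \geq c$ a.e.\ in $\om$, so the pointwise constraint defining $\sca_w$ is \emph{inactive} on $\om$. Consequently, for any $\psi \in C_c^\infty(\mathrm{int}\,\om)$ the perturbations $\sigma \pm t\psi$ lie in $\sca_w$ for all sufficiently small $t>0$: they retain the boundary data of $\sigma$, since $\psi$ has compact support in $\om$; on $\om$ one has $1 + \sigma_{,_2} \pm t\psi_{,_2} \geq c - t\|\psi_{,_2}\|_{\infty} \geq 0$; and $\det \nabla \us$ is unchanged on $P$. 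Feeding $\eta = \pm t\psi$ into the variational inequality \eqref{She:elweak} of Lemma \ref{lem:eleq} and letting the sign of $t$ vary gives $\int_{\om} \nabla \sigma \cdot \nabla \psi \, dx = 0$ for every such $\psi$, so by Weyl's lemma $\sigma$ is harmonic in the interior of $\om$.

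Next I would glue $\sigma$ to the explicit solution on the far side of $L$. By Lemma \ref{She:lemvanjac} we have $\sigma = -x_2$ a.e.\ on $P$, which is harmonic there with $\partial_{x_1}(-x_2) \equiv 0$. Because $\sigma \in W^{1,2}(Q)$ its one-sided traces on $L$ agree, so the trace of $\sigma$ on $L$ from the $\om$-side equals $-x_2$; and condition (ii) states exactly that the (weak conormal) trace of $\sigma_{,_1}$ on the open segment $L$ vanishes. Hence the function $w$ equal to $\sigma$ on $\om$ and to $-x_2$ on $P$ has matching Dirichlet \emph{and} Neumann traces across $L$. A routine integration by parts on each side, in which the interface terms cancel precisely because both $w$ and its normal derivative are continuous across $L$, then shows $\int_{\mathrm{int}\,Q} \nabla w \cdot \nabla \zeta \, dx = 0$ for every $\zeta \in C_c^\infty(\mathrm{int}\,Q)$; that is, $w$ is harmonic across $L$ and therefore on all of $\mathrm{int}\,Q$.

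Finally, since $w$ is harmonic on the connected open set $\mathrm{int}\,Q$ and coincides with the harmonic function $-x_2$ on the nonempty open subset $\mathrm{int}\,P$, unique continuation (equivalently, real-analyticity of harmonic functions) forces $w \equiv -x_2$ on $\mathrm{int}\,Q$, whence $\sigma = -x_2$ on $\om$. This gives $1 + \sigma_{,_2} = 0$ a.e.\ on $\om$, contradicting (i); therefore (i) and (ii) cannot both hold. I expect the genuine obstacle to lie in the rigorous handling of the normal-derivative trace on the \emph{interior} segment $L$: one must interpret the conormal derivative of the harmonic function $\sigma$ as an element of $H^{-1/2}(\partial\om)$ so that (ii) really encodes zero Neumann data, and then invoke a Holmgren-type uniqueness theorem for the resulting overdetermined Cauchy problem. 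Equivalently, one can sidestep pointwise traces altogether by viewing the gluing as a zero-Cauchy-data reflection and deducing harmonicity of $w$ across $L$ directly from the weak formulation.
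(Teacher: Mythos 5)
Your proof is correct and follows essentially the same route as the paper's: condition (i) makes the constraint inactive so that the variational inequality of Lemma \ref{lem:eleq} yields harmonicity of $\sigma$ on $\om$; condition (ii) together with Lemma \ref{She:lemvanjac} kills the interface term in Green's theorem, so $\sigma$ is weakly harmonic across $K$ and hence on all of $Q$; and unique continuation then forces $\sigma \equiv -x_2$. The only (immaterial) difference is the final contradiction: you contradict (i) directly via $1+\sigma_{,_2}\equiv 0$, whereas the paper observes that $\sigma \equiv -x_2$ violates the prescribed boundary data.
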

\begin{proof} Suppose for a contradiction that both (i) and (ii) hold.  Let $B(y,\delta) \subset \om$ and take $\varphi \in C_c^1(B(y,\delta),\R)$.   Then, since by hypothesis there is $c>0$ such that 
$1+\sigma_{,_{2}}(x) \geq c$ for a.e.\@ $x$ in $B(y,\delta)$, it is the case that $\sigma + \eps \varphi$ belongs to $\sca_w$ for all sufficiently small $\eps$.  Arguing as in the prelude to \eqref{She:elweak}, it follows that 
\begin{equation}\label{She:harmonicom}\int_{\om} \nabla \varphi \cdot \nabla \sigma \,dx = 0,\end{equation}
and hence by standard theory, that $\sigma$ is harmonic on the open set $\om$.  

Next, let $\Phi \in C_c^1(Q,\R)$ and note that, since the set $K:=\partial P \cap \partial \om$ has (two-dimensional) Lebesgue measure zero, it follows from \eqref{She:harmonicom}, the final assertion of Lemma \ref{She:lemvanjac} (which implies that $\sigma = -x_2$ on $P$) and Green's theorem that  
\begin{align*}\int_{Q} \nabla \Phi \cdot \nabla \sigma \,dx & =\int_{K} \Phi\left(1/2,x_2\right) \sigma_{,_{1}}\left(1/2,x_2\right)\,dx_2 - \int_{P} \Phi_{,_{2}} \,dx.
\end{align*}
Since $\Phi$ has compact support in $Q$, the second integral on the right-hand side vanishes.   Therefore, since we are assuming that 
(ii) holds, the previous line implies that $\int_{Q} \nabla \Phi \cdot \nabla \sigma \,dx=0$ for all $\Phi \in C^1_c(Q)$, and hence that $\sigma$ is harmonic on $Q$.  But $\sigma=-x_2$ on $P$ by Lemma \ref{She:lemvanjac} and hence, since $\sigma$
 is harmonic on $Q \supset P$ and $P$ has a nontrivial interior, it follows that $\sigma=-x_2$ on all of $Q$.   This requirement violates the boundary conditions, which is a contradiction.
\end{proof}

%To clarify notation we use $\partial_{2}$ to represent $\partial / \partial x_2$ in the rest of the paper.

\begin{proposition}\label{bruch1}  Let $\sigma_0$ be as defined in \eqref{She:sigmazero} and let $\Sigma$ be the unique harmonic function agreeing with $\sigma_0$ on $\partial \om$, where $\om=Q \setminus P$ is defined in \eqref{def:omega}.   
Then
\begin{displaymath}\sigma(x)=\left\{ \begin{array}{l l} \Sigma(x) & \textrm{if } x \in \om \\ 
-x_2  & \textrm{if } x \in P \end{array} \right.
\end{displaymath}
is the unique global minimizer of $I_w$ in $\sca_w$ and $\det \nabla \us > 0$ everywhere in $\om$.  It also holds that $\sigma_{,_1}$ cannot vanish $\mathcal{H}^{1}$-a.e.\@ along the set $K=\{y \in Q: \ y_{1}=1/2\}$.   In particular, the global minimizer does not belong to the class $C^1(Q)$.   
\end{proposition}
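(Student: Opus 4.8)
The plan is to realise the stated $\sigma$ as an admissible competitor, verify that it satisfies the variational inequality \eqref{She:elweak}, and then read off both the positivity of the Jacobian and the failure of $C^1$ regularity from the harmonicity of $\Sigma$ on $\om$ and the explicit interface $K=\{x_1=1/2\}$. First I would check that $\sigma\in\sca_w$. It is continuous across $K$, since there $\Sigma=\sigma_0(1/2,\cdot)=-x_2$ matches the value $-x_2$ carried on $P$, so $\sigma\in W^{1,2}(Q)$; the trace condition $\us=u_{\sigma_0}$ on $\partial Q$ holds by construction on $\partial\om\cap\partial Q$ and because $\sigma_0=-x_2$ on $\partial P\cap\partial Q$; and on $P$ one has $1+\sigma_{,_2}=0\ge0$. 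The only nontrivial part of admissibility is the constraint $1+\Sigma_{,_2}\ge0$ on $\om$, which coincides with the positivity assertion of the proposition and is treated below.

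For minimality I would invoke the sufficiency of \eqref{She:elweak} established in Lemma \ref{lem:eleq}: it suffices to show $\int_Q\nabla\sigma\cdot\nabla\eta\,dx\ge0$ whenever $\sigma+\eta\in\sca_w$. The key point is that any such $\eta$ must vanish on $P$: since $1+\sigma_{,_2}=0$ there, admissibility forces $\eta_{,_2}\ge0$ a.e.\ on $P$, and integrating this over each vertical segment (on which $\eta=0$ at $x_2=\pm1$ by the boundary condition) gives $\eta_{,_2}=0$, hence $\eta\equiv0$ on $P$ and in particular $\eta|_K=0$ in the sense of traces. Consequently $\nabla\eta=0$ on $P$ and $\eta\in W^{1,2}_0(\om)$, so that $\int_Q\nabla\sigma\cdot\nabla\eta\,dx=\int_\om\nabla\Sigma\cdot\nabla\eta\,dx=0$ by harmonicity of $\Sigma$. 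Thus \eqref{She:elweak} holds with equality, and the uniqueness already proved in Lemma \ref{lem:eleq} identifies $\sigma$ as the unique global minimizer.

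The crux of the argument, and what I expect to be the main obstacle, is the strict positivity $\det\nabla\us=1+\Sigma_{,_2}>0$ on $\om$, which simultaneously secures admissibility and prevents the degenerate set of $P$ from intruding into $\om$. I would set $g:=\Sigma+x_2$, so that $v:=1+\Sigma_{,_2}=g_{,_2}$ is harmonic on $\om$, and examine its boundary behaviour: $v=1$ on the left edge $\{x_1=-1\}$, $v=0$ on $K$, while on the top and bottom edges the Dirichlet data of $g$ is affine in the tangential variable $x_1$ (equal to $\pm1$ or $\pm(1-2x_1)$), whence $g_{,_{11}}=0$ and, by harmonicity, $v_{,_2}=g_{,_{22}}=-g_{,_{11}}=0$ there. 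If $v$ had a negative minimum, then by the strong minimum principle it would be attained on the interior of a top or bottom edge, where Hopf's lemma would force the outward normal derivative $v_{,_2}\ne0$, contradicting $v_{,_2}=0$; hence $v\ge0$, and since $v\equiv1$ on the left edge the strong minimum principle upgrades this to $v>0$ throughout $\om$. The delicate points are the corner $(1/2,\pm1)$, where $K$ meets the traction-free part and $\nabla g$ may be singular, and the kinks $(0,\pm1)$ where the data fails to be $C^1$; I would deal with these by an exhaustion of $\om$ by compact subsets, the minimum principle being needed only on the open edges.

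Finally, for the failure of $C^1$ regularity I would argue by contradiction, following the second half of the proof of Lemma \ref{lem:slopes}. Suppose $\sigma_{,_1}$ vanished $\mathcal{H}^1$-a.e.\ along $K$, i.e.\ condition (ii) of Lemma \ref{lem:slopes} held. Applying Green's theorem on $\om$ and on $P$ to $\int_Q\nabla\Phi\cdot\nabla\sigma\,dx$ for $\Phi\in C^1_c(Q)$, and using that $\sigma$ is harmonic on $\om$ (just proved) and that $\sigma=-x_2$ on $P$ (Lemma \ref{She:lemvanjac}), the interface term over $K$ would vanish and $\sigma$ would be harmonic on all of $Q$; since $\sigma=-x_2$ on the open set $P$, analyticity of harmonic functions would give $\sigma\equiv-x_2$ on $Q$, contradicting $\sigma=\sigma_0$ on $\partial Q$. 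Hence $\sigma_{,_1}$ cannot vanish a.e.\ on $K$. Because the trace of $\Sigma_{,_2}$ on $K$ equals $-1$ (as $g=0$ there), the tangential ($x_2$) component of $\nabla\sigma$ is $-1$ on both sides of $K$, whereas its normal ($x_1$) component jumps from $\sigma_{,_1}\ne0$ on the $\om$-side to $0$ on the $P$-side over a set of positive $\mathcal{H}^1$-measure; therefore $\nabla\sigma$ is discontinuous across $K$ and $\sigma\notin C^1(Q)$.
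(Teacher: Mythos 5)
Your outline parallels the paper's (admissibility, minimality via the variational inequality \eqref{She:elweak}, positivity via a maximum principle, non-$C^1$ via the dichotomy of Lemma \ref{lem:slopes}), but the crux of the proposition --- that $1+\Sigma_{,_2}\geq 0$ holds throughout $\om$ --- is not actually proved. Your Hopf-lemma mechanism (affine tangential data forces $v_{,_2}=0$ on the open horizontal edges, where $v:=1+\Sigma_{,_2}$, so no negative minimum can sit there) is fine on the open edges, but it silently assumes that $v$ extends continuously to $\bar{\om}$ and attains its infimum either in the interior, on the Dirichlet portions, or on the open edges. That is exactly what fails a priori: at the six exceptional points $(-1,\pm 1)$, $(0,\pm 1)$, $(1/2,\pm 1)$ the boundary data of $\Sigma$ is only Lipschitz with kinks, and the gradient of a harmonic function with such data is in general unbounded at such points (at $(0,\pm 1)$ one expects logarithmic blow-up of $\nabla \Sigma$). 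Hence the infimum of $v$ could a priori be approached only at these points, Hopf's lemma never gets invoked, and your proposed fix --- ``exhaustion of $\om$ by compact subsets'' --- begs the question: applying the minimum principle on an exhausting domain requires a lower bound for $v$ on the cut-out arcs near the exceptional points, which is precisely what must be proved. This is where the paper does its real work: Step 2's Claim, resting on Appendix Lemma \ref{downampney}, establishes $\liminf_{\eps \to 0}\inf_{\Gamma_a^{\eps}}(1+\Sigma_{,_2}) \geq 0$ on arcs $\Gamma_a^{\eps}$ shrinking to each exceptional point $a$, by combining the mean value property with a uniform $L^2$ bound for $\nabla \Sigma_{,_2}$ over the bad set $E=\{1+\Sigma_{,_2}<0\}$, the latter obtained by a difference-quotient test-function argument that exploits the sign information from the comparison-function step. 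Your proposal contains no substitute for this; to close the gap you would need either such an estimate or rigorous corner asymptotics (e.g.\@ that $v\to +\infty$ at the kinks $(0,\pm1)$, $v \to 1$ at $(-1,\pm 1)$ and $v\to 0$ at $(1/2,\pm1)$), which is itself substantial work.

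The remaining steps are correct, and in two places cleaner than the paper's. Your minimality argument observes that any admissible variation $\eta$ must vanish a.e.\@ on $P$ (by the same vertical-segment integration underlying Lemma \ref{She:lemvanjac}), hence lies in $W^{1,2}_0(\om)$, so that \eqref{She:elweak} holds with equality by the weak formulation of harmonicity; this avoids the smooth-approximation and trace-convergence argument of the paper's Step 4. Likewise, since harmonicity of $\sigma$ on $\om$ is known by construction, your final step needs only alternative (ii) of Lemma \ref{lem:slopes} to reach the contradiction, sidestepping the verification of alternative (i). But these simplifications do not compensate for the missing analysis at the corners and kinks, which is the heart of the proposition.
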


\begin{proof}  The first part of the proof consists in showing that $\sigma$ is admissible and that $\det \nabla \us>0$ in $\om$: this is done in Steps $1-3$.  Steps $4$ and $5$ deal respectively with the last two sentences in the statement of the Proposition.   

\vspace{1mm}
\noindent\textbf{Step 1}  By standard results in the theory of harmonic functions, $\sigma$ agrees with $\sigma_0$ in the sense of trace on $\partial Q$, so it only remains to prove that $1+\sigma_{,_{2}} \geq 0$ a.e.\@ in $\om$, this fact being immediate in $P$. Consider $z_1(x) = 1-x_2$ and note that $\Sigma(x) \leq z_1(x)$ for all $x \in \partial \om$.  Since $z_1$ is harmonic and both functions belong to $W^{1,2}(\om)$, the weak maximum principle implies that $\Sigma(x) \leq z_1(x)$ for all $x \in \bar{\om}$.  In particular, $\Sigma(x_1,1+h) \leq -h$ for $-1 < h < 0$ and $-1 \leq x_1 \leq 0$.  Therefore, since $\Sigma(x_1,0)$ for $-1 \leq x_1 < 0$, we have
\begin{align*}
\frac{\Sigma(x_1,1+h) - \Sigma(x_1,1)}{h} & \geq -1
\end{align*}
for this range of $x_1$ and $h$, so that letting $h \to 0$ gives $\Sigma_{,_2}(x_1,1) \geq -1$.  A similar argument using the harmonic function $-1-x_2$, which satisfies $z_2(x) \leq \Sigma(x)$ for $x \in \partial \om$, implies that $\Sigma_{,_{2}}(x_1,-1) \geq -1$ for $-1 \leq x \leq 0$.     The derivatives $\Sigma_{,_{2}}(x_1,1)$ and  $\Sigma_{,_{2}}(x_1,-1)$ for $0 \leq x_1 \leq 1/2$ can be bounded below by $-1$ in a similar fashion, the only differences being that the comparison function $z_1$ should be replaced by $z_1(x) - 2x_1$ in the first case and $z_2(x)$ by $z_2(x)-2x_1$ in the second.  It is immediate from the boundary condition that $1+\Sigma_{,_2}(\pm 1, x_2) \geq 0$, so that, in summary, $1+\Sigma_{,_{2}} \geq 0$ on all of $\partial \om$.
\vspace{1mm}

\noindent\textbf{Step 2}  Now note that $1+\Sigma_{,_{2}}$ is harmonic in $\om$, so that if  $\Sigma_{,_{2}}$ were to belong to $W^{1,2}(\om)$ then the weak maximum principle would apply.   This, together with the previously established fact that $1+ \Sigma_{,_{2}} \geq 0$ on $\partial \om$ would then imply $1+ \Sigma_{,_{2}} \geq 0$ on $\om$, and hence that $\sigma$ belongs to $\sca_w$ as desired.  By \cite[Theorem 8.12]{GT}, $\Sigma$ belongs to $W^{2,2}(\om')$, where $\om'$ is any subset of $\om$ whose closure does not contain the corners $(-1,\pm 1)$, $(1/2, \pm 1)$ or the points $(0,\pm 1)$.   The reason is that away from these points the boundary condition $\Sigma = \sigma_0$ is smooth and the (flat) boundary is sufficiently regular.  In particular, it follows that $\Sigma_{,_2}$ belongs to $W^{1,2}(\om')$ for such $\om'$ and the weak maximum principle will apply. We have already established that $1+\Sigma_{,_{2}} \geq 0$ on $\partial \om$, but it could still be that, for some $c>0$, $1+\Sigma_{,_{2}}<-c$ occurs in $\om$ and persists `up to the corners':  the argument we give below rules this out.

To fix ideas, let $\eps > 0$, let $C=\{(-1,\pm 1), (1/2,\pm 1), (0,\pm 1)\}$ and define
$\om_{\eps} = \om \setminus \cup_{a \in C} B(a,\eps)$.  Thus $\om_{\eps}$ is a version of $\om$ with small neighbourhoods of the set $C$ removed.   Each point $a$ in $C$ has now given rise to two distinct corners $a_1$ and $a_2$, say,  on $\partial \om$, but it is easy to smoothen $\partial \om_{\eps}$ near the newly created corners, thereby producing a new subset $\om'_{\eps}$, say, of $\om_{\eps}$ with the properties that
(i) $\partial \om'_{\eps}$ is smooth and (ii) $\partial \om'_{\eps}$ agrees with $\partial \om$ except possibly in sets of the form $B(a,2\eps)$, where $a$ lies in $C$.  Thus 
\begin{align*}
\partial \om \setminus \partial \om'_{\eps} = \bigcup_{a\in C}\Gamma^{\eps}_{a}
\end{align*}
where each $\Gamma^{\eps}_a$ is a smooth curve whose maximum distance from $\partial \om$ is of order $2 \eps$.   
\vspace{2mm}

\noindent \textbf{Claim:} for each $a$ in $C$ it is the case that 
\begin{align*}\liminf_{\eps \to 0}\inf_{\Gamma^{\eps}_{a}} (1+\Sigma_{,_{2}}) \geq 0.
\end{align*}

\vspace{1mm}
\noindent \textbf{Proof of claim:} in the notation of Lemma \ref{downampney}, let $E=\{x \in \om: \ \Sigma_{,_2}(x) + 1 < 0\}$.   Without loss of generality let $a=(-1,1)$ and let $z \in \Gamma_{a}^{\eps}$.   Suppose that $\Sigma_{,_2}(z) + 1 < 0$ and for any $y$ in $\om$ let $Py=(y_1,1)$, that is, $Py$ is the projection of $y$ onto the upper boundary of $\om$.    Since $\Sigma$ is smooth on compact subsets of $\om$, for any $y$ in $\om$ there exists  a first point $y_1(y)$ on the line $[y,Py]$ where $F:=\Sigma_{,_2}+1$ satisfies $F(y_1(y)) \geq 0$.  In particular, $[y, y_{1}(y)) \subset E$.   Note that $|y-y_1(y)|\leq d(y):= \dist(y,\partial \om)$.    Then we estimate $F(y)$ from below as follows:   
\begin{align*}
F(y)& = \int_{0}^{1} \nabla F((1-t)y+ty_1(y)) \cdot (y_1(y)-y) \,dt \\
& \geq - d(y) \int_{0}^{1} |\nabla F((1-t)y+ty_1(y))|\,dt.\end{align*}
Now let $0<r < \frac{1}{2}d(z)$ and integrate over $B(z,r)\subset \om$.  Note that the bounds $d(z)/2 < d(y) < 3d(z)/2$ are immediate for $y \in B(z,r)$. Since $F$ is harmonic, the mean value theorem applies, so that
\begin{align*} F(z) & \geq -\frac{3d(z)}{2 \pi r^2} \int_{B(z,r)}\int_{0}^{1}|\nabla F((1-t)y+ty_1(y))|\,dt \,dy \\
& \geq -\frac{C d(z)}{r} \left(\int_{T} |\nabla F|^2\,dy\right)^{\frac{1}{2}}
\end{align*} using H\"{o}lder's inequality, where $C$ is a positive constant that does not depend on the quantities elsewhere in the estimate.    Here, the set $T\subset E$ is formed from the union of lines $[y,y_{1}(y)]$ where $y \in B(z,r)$ and, by inspection, its measure is at most of order $r d(z) $.  Now suppose we fix $r=d(z)/2$:  then the measure of $T$ is bounded above by a quantity of order $d(z)^2 \leq 4\eps^2$.  Hence the estimate above gives
\begin{align*} F(z) & \geq - 2 C\left(\int_{T} |\nabla F|^2\,dy\right)^{\frac{1}{2}}.
\end{align*} 
Since $T \subset E$, Lemma \ref{downampney} applies and ensures that the integral on the right tends to 0 as $\eps \to 0$.    This proves the claim.

To conclude Step 2 we apply the weak maximum principle to the domain $\om'_{\eps}$ defined above, giving for each $a \in C$
\begin{align*}
\Sigma_{,_{2}}(x)+1 \geq \min\left\{0,\,\inf_{\Gamma_{a}^{\eps}} 1+\Sigma_{,_{2}}\right\} \ \forall x \in \om'_{\eps}.  
\end{align*}
Letting $\eps \to 0$ and applying the claim above, we see that $\Sigma_{,_{2}}(x)+1 \geq 0$ for any $x \in \om$.

\vspace{1mm}
\noindent \textbf{Step 3} We apply the strong maximum principle to establish that $\Sigma_{,_{2}}(x)+1>0$ in $\om$.   Suppose for a contradiction that there is $x^*$ in $\om$ such that $\Sigma_{,_{2}}(x^*)+1=0$.  Pick a subdomain $\widehat{\om}=(-1,1/2) \times (1-s,-1+s)$ containing $x^*$ and where $s>0$.  Notice that $\Sigma_{,_2}(1/2,x_2)+1=0$ for $|x_2|< 1$, so that $x^*$ would be an interior minimum for $\Sigma_{,_{2}}+1$ and $\Sigma$ lies in $W^{2,2}(\widehat{\om})$.  By the strong maximum principle, this is only possible if $\Sigma_{,_{2}}+1=0$ throughout $\om$.  But this violates the boundary condition $\Sigma(-1,x_2)=0$ for $|x_2|<1$.

\vspace{1mm}
\noindent \textbf{Step 4}
Next, we show that $\sigma$ as defined satisfies inequality \eqref{She:elweak}, which, by Lemma \ref{lem:eleq}, is both necessary and sufficient for $\om$ to minimize $I_{w}$ in $\sca_w$.   Let $\eta \in W^{1,2}_{0}(Q)$ be such that $\sigma + \eta \in \sca_w$.  Let $\eta^{(j)}$ approximate $\eta$ in $W^{1,2}$ norm, where $\eta^{(j)} \in C_c^{\infty}(Q)$ for all $j$.   By construction, $\sigma$ is harmonic on each of the subsets $\om$ and $P$, so that, arguing as in the proof of Lemma \ref{lem:slopes},
\begin{align*} \int_{Q} \nabla \eta^{(j)} \cdot \nabla \sigma \,dx & = \int_{K} \eta^{(j)} \sigma_{,_{1}} \,d\sch^{1} - \int_{P} \eta_{,_{2}}^{(j)} \,dx \end{align*}
%\[\int_{Q} \nabla \eta \cdot \nabla \sigma \,dx = \int_{K} \eta(1/2,x_2) \sigma_{,_{1}}(1/2,x_2)\,dx_2. \]
The second integral on the right-hand side vanishes trivially.   To deal with the integral along $K=\partial \om \cap \partial \om$ we note that, by Lemma \ref{She:lemvanjac}, we must have $\eta\arrowvert_{L_{x_1}} = 0$ on almost every part line $L_{x_1}=\{x_1\} \times [-1,1]$ in $P$.  Therefore, without loss of generality, we may assume that $\eta\arrowvert_{K}=0$.   Moreover, since $\eta^{(j)} \to \eta$ in particular in $W^{1,2}(\om)$, properties of the trace imply that $\eta^{(j)} \to 0$ in $L^{2}(K)$.   By construction, $\sigma_{,_{1}}$ is bounded on $Q$,  so it follows that $\int_{K} \eta^{(j)} \sigma_{,_{1}} \,d\sch^{1} \to 0$ as $j \to \infty$.  Hence  inequality \eqref{She:elweak} holds as an equality, and it follows from Lemma \ref{lem:eleq} that $\sigma$ as constructed is the global minimizer of $I_{w}$ in $\sca_{w}$.

\vspace{1mm}
\noindent \textbf{Step 5}	
The final assertion of the proposition follows by applying Lemma \ref{lem:slopes}.   Indeed, alternative (i) of that lemma holds because, as we have seen, $\det \nabla \us$ is strictly positive and continuous on $\om$.  Therefore alternative (ii) \emph{cannot} hold, meaning that $\sigma_{,_{1}}$ is not zero when viewed as the trace of $\sigma_{,_{1}} \arrowvert_{\om}$ along $K$.  Since $\sigma_{,_{1}}$ clearly vanishes in $P$, it cannot be that $\nabla \sigma$ is continuous across $K$.     This concludes the proof. 
\end{proof}

\begin{remark}\emph{The last line of the statement of the proposition could be anticipated by noting that $\sigma$ maps the set $K$ to a point.  Therefore in any left-neighbourhood of $K$, with obvious notation, the derivative $\sigma_{,_1}$ could not possibly agree with the same derivative in the region $P$.}
\end{remark}

\subsection{The case $h=h_{0}$: shear minimizers with area compression energy}\label{ss2}

In this section we examine the effect of imposing the constraint $\det \nabla \us >0$ a.e.\@ in $Q$.   We focus in particular on a problem where a displacement boundary condition is applied across a strict subset 
\begin{align}\label{partialQ1}
\partial Q_1 & = \{ x \in \partial Q: \ x=(\pm 1, x_2), \ |x_2| \leq 1\}
\end{align}
of $\partial Q$.  On the `free boundary' $\partial Q \setminus \partial Q_1$ a natural so-called traction-free condition should arise, but this is not straightforward since it involves the first derivatives of $\sigma$ and these are not necessarily defined even in the sense of trace on $\partial Q$.  We make sense of this by imposing on the minimizing $\sigma$ the additional condition $1+\partial_2 \sigma \geq c > 0$ a.e.\@ in $Q$ for some constant $c$, that is we strengthen $\det \nabla \us > 0$ a.e.\@ in $Q$ to $\det \nabla \us \geq c > 0$ a.e.\@ in $Q$.  The convexity of $W(\nabla \us)$ in $\nabla \sigma$ then allows us to apply a bootstrapping argument to improve the regularity of $\sigma$ to $W^{2,2}(Q)$, so that the natural boundary condition is well-defined via the trace theorems for Sobolev functions.   

One outcome of this is that $\sigma$ satisfying these assumptions cannot be $C^{1}(\bar{Q})$:  the `corner' of the domain together with the natural and imposed boundary conditions combine to form a discontinuity in the gradient `at' the corner.  On closer inspection the same phenomenon could be induced by considering a suitable Neumann problem for the Dirichlet energy on the same domain and with the same boundary conditions.  More interesting is its interpretation in the original nonlinear elasticity setting, namely that if a minimizer is such that $\det \nabla \us$ is bounded away from zero a.e.\@ then it is not $C^{1}(\bar{Q})$.  This seems strange because one normally thinks of the condition $\det \nabla \us \geq c > 0$ a.e.\@ as being `regularizing', and indeed we shall see that it is so at interior points of the domain.  We have to conclude that the free boundary $\partial Q_2:=\partial Q \setminus \partial Q_1$ plays a significant role in producing the discontinuity in $\nabla \sigma$ `at' the boundary.  

We now give the details of the results alluded to above.  Let 
\begin{equation}\label{She:defis} I_{s}(\sigma) = \int_{Q} \frac{1}{2}|\nabla \us|^2+h_0(\det \nabla \us)\, dx,\end{equation}
where, for concreteness, we assume that $h_0$ satisfies hypotheses (H1)-(H3).   Let $\partial Q^{\pm}=\{(\pm 1,t): \ -1 \leq t \leq 1\}$ denote the left (-) and right (+) sides of $Q$, and let $\sigma_1$ be any $W^{1,2}(Q;\R)$ map such that $I_{s}(\sigma_1) < +\infty$.  Finally, define the class of admissible maps in the strong constraint case by
\begin{equation} \label{She:defscas}\sca_{s} = \{ \sigma \in W^{1,2}(Q; \R): \ \us= u_{\sigma_1} \ \textrm{on } \partial Q_{+} \cup \partial Q_{-}, \  I_s(\sigma) <+\infty\}.\end{equation}  
Note that, in the notation introduced above, $\partial Q_1 = \partial Q^{+} \cup \partial  Q^{-}$.

\begin{proposition} Let $I_s$ and $\sca_s$ be defined as in \eqref{She:defis} and \eqref{She:defscas} respectively.  Then there exists a minimizer of $I_s$ in $\sca_s$.
\end{proposition}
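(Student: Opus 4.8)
The plan is to apply the direct method of the calculus of variations, exploiting the convexity of $\sigma \mapsto I_s(\sigma)$ already recorded in the preamble to this section. Recall that for shear maps $\det \nabla \us = 1 + \sigma_{,_2}$ is affine in $\nabla \sigma$ and that $W(\nabla \us)$ is convex in $\nabla \sigma$. Writing $p = \nabla \sigma$, the integrand takes the explicit form $g(p) = \tfrac12\bigl(1 + p_1^2 + (1+p_2)^2\bigr) + h_0(1 + p_2)$, which, by hypotheses (H1) and (H3), is a convex, lower semicontinuous, extended-real-valued function of $p$ (it equals $+\infty$ precisely when $1+p_2 \leq 0$). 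Since there is no explicit dependence on $x$ or on $\sigma$ itself, $I_s(\sigma) = \int_Q g(\nabla \sigma)\,dx$ is a convex integral functional of the gradient, and this structural fact drives the whole argument.

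First I would check that $\sca_s$ is nonempty: by construction $\sigma_1 \in \sca_s$, as $\sigma_1$ trivially satisfies the boundary condition on $\partial Q^+ \cup \partial Q^-$ and $I_s(\sigma_1) < +\infty$ by hypothesis. Next I would establish coercivity. Since $h_0 \geq 0$ by (H1), we have $I_s(\sigma) \geq \tfrac12\int_Q p_1^2 + (1+p_2)^2 \,dx$, and the elementary bound $(1+p_2)^2 \geq \tfrac12 p_2^2 - 1$ (Young's inequality) yields $I_s(\sigma) \geq \tfrac14\normsub{\nabla\sigma}{L^2(Q)}^2 - C$ for a constant $C$ depending only on $|Q|$. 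Because $\sigma - \sigma_1$ vanishes in the sense of trace on $\partial Q^+ \cup \partial Q^-$, a set of positive $\sch^1$-measure, the Poincaré inequality bounds $\normsub{\sigma}{L^2(Q)}$ in terms of $\normsub{\nabla\sigma}{L^2(Q)}$ and $\normsub{\sigma_1}{W^{1,2}(Q)}$. Hence any minimizing sequence $\sigma^{(j)}$ is bounded in $W^{1,2}(Q)$, and after passing to a subsequence we may assume $\sigma^{(j)} \weakconv \sigma$ in $W^{1,2}(Q)$.

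It then remains to show that $\sigma \in \sca_s$ and that $I_s(\sigma) \leq \liminf_j I_s(\sigma^{(j)})$. The boundary condition passes to the weak limit by the weak continuity of the trace operator $W^{1,2}(Q) \to L^2(\partial Q)$, so that $\us = u_{\sigma_1}$ on $\partial Q^+ \cup \partial Q^-$. For the energy I would invoke the standard lower semicontinuity theorem for integral functionals with convex, lower semicontinuous integrands bounded below: since $g$ is convex in $p$, the functional $\sigma \mapsto \int_Q g(\nabla \sigma)\,dx$ is sequentially weakly lower semicontinuous on $W^{1,2}(Q)$. This gives $I_s(\sigma) \leq \liminf_j I_s(\sigma^{(j)}) = \inf_{\sca_s} I_s \leq I_s(\sigma_1) < +\infty$, which simultaneously shows $I_s(\sigma) < +\infty$, so that $\sigma \in \sca_s$, and that $\sigma$ is a minimizer.

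The step requiring the most care is the weak lower semicontinuity, precisely because $h_0$ takes the value $+\infty$ on $\{1 + p_2 \leq 0\}$ by (H3) and blows up as its argument tends to $0+$. The convexity and lower semicontinuity of $g$ as an extended-real-valued function are exactly what make a De Giorgi--Ioffe type lower semicontinuity theorem applicable despite these infinities; equivalently, one may split the integrand, treating the quadratic part by convexity and the $h_0(1+\sigma_{,_2})$ part by Fatou's lemma together with the observation that the constraint set $\{\det \nabla \us \geq 0\}$ is weakly closed, being the preimage of the (strongly closed, convex, hence weakly closed) cone $\{f \geq 0\}$ in $L^2(Q)$ under the affine, weakly continuous map $\sigma \mapsto 1 + \sigma_{,_2}$. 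This is the shear-map analogue of the weak-closedness argument used in the proof of Proposition \ref{sf1}, and either route confirms the inequality and completes the argument.
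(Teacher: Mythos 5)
Your proof is correct, but it takes a genuinely different route from the paper. The paper works at the level of the vector-valued deformation: it observes that the integrand $W$ is polyconvex, invokes the Ball--Murat lower semicontinuity theorem (\cite[Theorem 6.1]{BM84}) to get sequential weak lower semicontinuity of $u \mapsto \int_Q W(\nabla u)\,dx$ on $W^{1,2}(Q;\R^2)$, takes a weakly convergent minimizing sequence $u_{\sigma^{(j)}}$, and then must additionally argue (stated as ``straightforward'') that the weak limit $u$ is again a shear map $u = \us$. You instead work directly with the scalar field $\sigma$ and exploit the convexity of $p \mapsto \tfrac12\bigl(1+p_1^2+(1+p_2)^2\bigr)+h_0(1+p_2)$ --- precisely the structural fact the paper records in the preamble to Section \ref{She:sect} and uses later in Lemma \ref{lem:pen} for uniqueness, but not in this existence proof --- together with a classical lower semicontinuity theorem for convex, lower semicontinuous, extended-real-valued integrands. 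Your route buys three things: it is more elementary (no polyconvexity machinery), the weak limit is automatically a shear map since you never leave the scalar formulation, and you make explicit the coercivity and Poincar\'e steps that the paper compresses into ``without loss of generality we can suppose [the minimizing sequence] to be weakly convergent.'' The paper's route buys generality: it would apply verbatim to admissible classes without the shear structure, for which convexity in the gradient fails but polyconvexity survives. One small caution: your alternative ``split the integrand and use Fatou'' remark is shakier than you suggest, since weak $L^2$ convergence of $\sigma_{,_2}^{(j)}$ gives no pointwise convergence; the correct statement is that $v \mapsto \int_Q h_0(1+v)\,dx$ is convex and strongly lower semicontinuous, hence weakly lower semicontinuous. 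Since your primary argument goes through the De Giorgi--Ioffe theorem, this does not affect the validity of the proof.
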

\begin{proof} By hypothesis, $\sca_s$ contains $\sigma_1$ and is thus nonempty.  The integrand of the functional $I_s$ is polyconvex and, moreover, satisfies the hypotheses of \cite[Theorem 6.1]{BM84} in the two dimensional case.  Hence $I(u):=\int_{Q}W(\nabla u)\,dx$ is sequentially lower semicontinuous with respect to weak convergence in $W^{1,2}(Q;\R^2)$.   Let $u_{\sigma^{(j)}}$ be a minimizing sequence which without loss of generality we can suppose to be weakly convergent to $u$, say.  It is straightforward to show that $u=\us$ for some $\sigma$, i.e. $u$ is a shear map, and, by the sequential weak lower semicontinuity of $I(\cdot)$, that $\sigma$ minimizes $I_s$ in $\sca_s$. \end{proof}

For concreteness we fix $\sigma_1=0$, so that $\sigma=0$ (in the sense of trace) on $\partial Q_1$ for any $\sigma$ in $\sca_s$.  This corresponds to applying the boundary condition $\us = \id$ on $\partial Q_1$.  It is clear that $\sigma_1$ is such that $I(\sigma_1)<+\infty$.    

We also impose a further condition on the convex function $h_0$: namely, that the upper bound in condition (H4) defined in Section \ref{Ann:sec:WithVolumeCompression} holds with the parameter $\tau_0=0$.  Alternatively, we can (and do) impose the following condition:
\begin{align}\label{ice1port1}  \quad \quad \quad \quad \forall \mu > 0 \quad \exists C_\mu > 0 \quad \forall s \in [\mu,+\infty) \quad \quad \quad |h_0''(s)| \leq C_\mu.
\end{align}
This, together with the next lemma,  will allow us to apply some elliptic regularity theory techniques.
\begin{lemma}\label{lem:gc} Let the $C^2$ function $h_0$ satisfy hypothesis \eqref{ice1port1}.  Then for each $\mu > 0$ there is $C'_{\mu} >0$ such that $|h_0'(s)| \leq C'_{\mu} s$ for all $s \geq \mu$.  
\end{lemma}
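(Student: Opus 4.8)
The plan is to integrate the pointwise bound on $h_0''$ supplied by \eqref{ice1port1} to obtain an affine bound on $h_0'$, and then to absorb the additive constant into a linear term by exploiting the hypothesis $s \geq \mu > 0$. The statement is essentially the elementary fact that a function whose own derivative grows at most linearly can grow only linearly; the single point of substance is that the \emph{strictly positive} lower cut-off $\mu$ is what allows an affine bound to be upgraded to the homogeneous linear bound demanded by the lemma.

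First I would fix $\mu > 0$ and invoke \eqref{ice1port1} to produce a constant $C_\mu > 0$ with $|h_0''(t)| \leq C_\mu$ for every $t \geq \mu$. Since $h_0$ is of class $C^2$ on $(0,+\infty)$, the value $h_0'(\mu)$ is a finite real number, and the fundamental theorem of calculus gives, for each $s \geq \mu$,
\[ h_0'(s) = h_0'(\mu) + \int_{\mu}^{s} h_0''(t)\,dt. \]
Taking absolute values and using the bound on $h_0''$ over $[\mu,s]$ then yields
\[ |h_0'(s)| \leq |h_0'(\mu)| + C_\mu (s-\mu) \leq |h_0'(\mu)| + C_\mu s. \]

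The only step requiring a moment's thought is passing from this affine estimate to the linear one. Because $s \geq \mu > 0$ we have $1 \leq s/\mu$, so that $|h_0'(\mu)| \leq \bigl(|h_0'(\mu)|/\mu\bigr)\,s$; substituting this into the previous display gives
\[ |h_0'(s)| \leq \left( \frac{|h_0'(\mu)|}{\mu} + C_\mu \right) s \qquad \text{for all } s \geq \mu, \]
and the choice $C'_\mu := |h_0'(\mu)|/\mu + C_\mu$ completes the argument. I do not anticipate any genuine obstacle: every inequality above is immediate once the lower cut-off $\mu$ is fixed, and it is precisely the positivity of $\mu$ that permits the constant term $|h_0'(\mu)|$ to be reabsorbed into the factor multiplying $s$.
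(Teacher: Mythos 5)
Your proof is correct and follows essentially the same route as the paper's: integrate the uniform bound on $h_0''$ from \eqref{ice1port1} to get the affine estimate $|h_0'(s)| \leq |h_0'(\mu)| + C_\mu(s-\mu)$, then use $s \geq \mu > 0$ to absorb the constant $|h_0'(\mu)|$ into a linear term. The paper merely states this computation as ``straightforward to check'' (with a slightly cruder final constant), whereas you have written it out in full; there is no substantive difference.
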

\begin{proof}  Using hypothesis \eqref{ice1port1} and the assumption that $h_0$ is $C^2$, it is straightforward to check that
\[ |h_0'(s)| \leq |h_0'(\mu)| + C_\mu |s-s_0|\]
provided $s \geq \mu$.     Therefore
\[ |h_0'(s)| \leq \left(2C_{\mu} + \frac{|h_0'(\mu)|}{\mu}\right) s\]
for all $s \geq \mu$, and the lemma follows.
\end{proof}

We are now in a position to improve the regularity of the minimizing map $\sigma$. In the rest of this section it will be convenient to switch notation, writing $\partial_1 \sigma$ in place of $\sigma_{,_1}$, and so on. 

\begin{lemma}\label{lem:pen} Let $W$ be given by \eqref{www} with $h=h_0$ and assume that $h_0$ is strongly convex, $C^2$ where it is finite, and that it satisfies (H1) - (H3) and \eqref{ice1port1}.   Then the function 
\[ \nabla \sigma \mapsto W(\nabla \us) \]
is strongly convex and the minimizer $\sigma$ of $I_s$ in $\sca_s$ is unique.
Moreover, if there exists $c>0$ such that 
\begin{equation}\label{sp1} 1+\partial_2 \sigma(x) \geq c \textrm{ a.e.} \ x \in Q
\end{equation}
then $\sigma$ belongs to $W^{2,2}(Q\setminus V)$ where $V$ is any compact set whose interior contains the corners $\{(\pm 1, \pm 1)\}$ of $Q$.
\end{lemma}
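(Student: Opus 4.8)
The plan is to treat the two assertions separately: first the strong convexity, from which uniqueness follows at once, and then the regularity, which is the substantial part. For the convexity I would write the integrand as a function of the gradient alone. Since $\nabla u_{\sigma} = \1 + e_2 \otimes \nabla \sigma$ has rows $(1,0)$ and $(\partial_1 \sigma, 1+\partial_2 \sigma)$, a direct computation gives, with $p=\nabla\sigma=(p_1,p_2)$,
\[ W(\nabla u_{\sigma}) = g(p) := \tfrac12\big(1 + p_1^2 + (1+p_2)^2\big) + h_0(1+p_2). \]
Differentiating twice in $p$ yields $D^2 g(p) = \diag\big(1,\,1+h_0''(1+p_2)\big)$. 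Because $h_0$ is convex, $h_0''\geq 0$, so $D^2 g \geq \1$ uniformly; this is precisely strong convexity of $g$ with constant $1$, the stated strong convexity of $h_0$ only reinforcing the lower bound in the $p_2$ direction. Strong convexity of $g$ makes $I_s(\sigma)=\int_Q g(\nabla\sigma)\,dx$ strictly convex, and $\sca_s$ is convex (the condition on $\partial Q_1$ is affine, $\det\nabla u_{\sigma}=1+\partial_2\sigma$ is affine in $\nabla\sigma$, and $\{I_s<+\infty\}$ is convex by convexity of $g$); hence two minimizers share the same gradient a.e., differ by a constant, and the boundary condition on $\partial Q_1$ forces that constant to vanish. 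This gives uniqueness.

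For the regularity I would first record the Euler--Lagrange equation. Under \eqref{sp1} the constraint $\det\nabla u_{\sigma}>0$ is strict, so $\sigma+\eps\phi$ with $\phi$ smooth and vanishing on $\partial Q_1$ is admissible for $\eps$ of either sign and small; the first variation therefore vanishes and, using Lemma \ref{lem:gc} to ensure $h_0'(1+\partial_2\sigma)\in L^2$, we obtain
\[ \int_Q \partial_1\sigma\,\partial_1\phi + \big[(1+\partial_2\sigma) + h_0'(1+\partial_2\sigma)\big]\,\partial_2\phi\,dx = 0 \]
for all such $\phi$. The decisive structural facts are that the integrand carries no explicit $x$-dependence, so the equation is translation invariant, and that its linearisation has coefficient matrix $D^2 g = \diag\big(1,\,1+h_0''(1+\partial_2\sigma)\big)$, which by \eqref{sp1} and \eqref{ice1port1} (with $\mu=c$) obeys $1\leq 1+h_0''(1+\partial_2\sigma)\leq 1+C_c$ almost everywhere. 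Thus the equation is uniformly elliptic with constants depending only on $c$.

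The conclusion then follows from the Nirenberg difference-quotient method, applied interiorly and up to each flat piece of $\partial Q$ away from the corners. In the interior I would test the translated equation with $\Delta_{-h}^{k}(\zeta^2\Delta_h^{k}\sigma)$ for a cut-off $\zeta$ and both $k=1,2$; the averaged coefficient matrix inherits the same bounds (convexity of the constraint keeps the intermediate value of $1+\partial_2\sigma$ above $c$ along each difference segment), and a Cauchy--Schwarz/Young estimate bounds $\int\zeta^2|\nabla\Delta_h^{k}\sigma|^2$ uniformly in $h$, giving $\sigma\in W^{2,2}_{\mathrm{loc}}(Q)$. Near the vertical Dirichlet sides $\partial Q^{\pm}$ I would localise in a half-ball avoiding the corners and take only the tangential quotient $k=2$, which preserves $\sigma=0$; this controls $\partial_{12}\sigma$ and $\partial_{22}\sigma$, after which the equation recovers $\partial_{11}\sigma = -(1+h_0''(1+\partial_2\sigma))\,\partial_{22}\sigma\in L^2$. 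Near the horizontal traction-free sides $\partial Q_2$ I would instead take the tangential quotient $k=1$, whose test functions need not vanish there; this controls $\partial_{11}\sigma$ and $\partial_{12}\sigma$, and the equation gives $(1+h_0''(1+\partial_2\sigma))\,\partial_{22}\sigma\in L^2$, whence $\partial_{22}\sigma\in L^2$ since the coefficient is bounded below by $1$. A finite cover of $Q\setminus V$ by such interior balls and boundary half-balls yields $\sigma\in W^{2,2}(Q\setminus V)$.

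The main obstacle I anticipate is the behaviour at the four corners, where the boundary condition changes type from Dirichlet to traction-free and where $W^{2,2}$ regularity genuinely fails --- precisely the discontinuity this section is designed to exhibit; this is what forces the removal of $V$ and requires every difference-quotient estimate to be carried out in balls or half-balls bounded away from the corners. The accompanying technical points are making the argument uniform up to the Neumann boundary, where one must check that the tangential test functions remain admissible and that the resulting boundary term is benign, and preventing the ellipticity constants from degenerating --- exactly where the a priori bound \eqref{sp1} together with \eqref{ice1port1} is indispensable.
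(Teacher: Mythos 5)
Your proposal is correct and follows essentially the same route as the paper: derive the Euler--Lagrange system $\int_Q L(\nabla\sigma)\cdot\nabla\phi\,dx=0$ with $L(p)=(p_1,\,1+p_2+h_0'(1+p_2))$, note that \eqref{sp1} and \eqref{ice1port1} make it uniformly elliptic with controllable growth (via Lemma \ref{lem:gc}), run Nirenberg difference quotients in the interior and tangentially along the flat Dirichlet and traction-free sides away from the corners, recover the remaining second derivative from the equation itself, and conclude by a covering argument. Your write-up is in places more explicit than the paper's --- the diagonal Hessian computation $D^2g=\diag(1,1+h_0'')$ where the paper defers to the ellipticity of the system, the equality case in the uniqueness argument (equal gradients, constant difference killed by the boundary condition), and the weak formulation with test functions vanishing only on $\partial Q_1$ needed at the Neumann sides --- while the paper additionally passes through a reverse-H\"older step to get $W^{2,q}$, $q>2$, and interior H\"older continuity of $\nabla\sigma$, which is not needed for the stated $W^{2,2}(Q\setminus V)$ conclusion.
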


\begin{proof}  The first assertion of the lemma is straightforward when we see that the convexity of
\[ W(\nabla \us) = \frac{1}{2}|\1 + e_2 \otimes \nabla \sigma|^2 + h_0(1+\partial_2 \sigma)\]
with respect to $\nabla \us$ is equivalent to the strong ellipticity of the system \eqref{syst1} introduced below, so in anticipation of that result we do not prove strong convexity here.

   If there were two distinct minimizers of $I_s$ in $\sca_s$, $\sigma$ and $\bar{\sigma}$, say, with $I_s(\sigma) = I_s(\bar{\sigma})=m$, then the strict convexity of $W(\nabla \us)$ in $\nabla \sigma$ coupled with the convexity of the class $\sca_s$ clearly implies that 
\[ I_s (\sigma / 2 + \bar{\sigma} /2) < m,\]
a contradiction.  Thus $\sigma$ is unique.

Now suppose that condition \eqref{sp1} holds.  Then if $\eta$ is any smooth function with compact support in $Q$ it follows that $\sigma+\eps \eta$ is admissible provided $\eps$ is sufficiently small.  Hence, on using a suitable dominated convergence theorem, it can be checked that $\partial_\eps I_{s}(\sigma+\eps \eta)$ vanishes at $\eps=0$, leading to 
\begin{equation}\label{syst1} \int_{Q} L(\nabla \sigma) \cdot \nabla \eta\,dx = 0, \end{equation}
where 
\[ L(p) = (p_1,1+p_2+ h'(1+p_2)) \quad \forall  p \in \R^2.\]
The hypotheses on $h$ together with assumption \eqref{sp1} imply that \eqref{syst1} is an elliptic system satisfying controllable growth conditions.  To see this, note that by the convexity of $h_0$ and \eqref{sp1}, 
\[ \xi^T DL(p) \xi  =  \xi_1^2 + (1+h_0''(1+p_2)) \xi_2^2 \geq \lambda |\xi|^2\]  
for some $\lambda > 0$ and all $\xi \in \R^2$.  Moreover, by \eqref{sp1} and Lemma \ref{lem:gc}, $|L(p)| \leq C|p|$ for all $p$ such that $1+p_2 \geq c$.  A differencing argument, such as the one given in the course of the proof of \cite[Theorem 1.1, Chapter II]{Giaquinta}, can now be employed to prove that $D^2\sigma \in L^{2}_{\textrm{loc}}$.  In fact, the argument leading to \cite[Proposition 3.1, Chapter VI]{Giaquinta} shows that $\sigma$ belongs to $W^{2,q}(Q,\R)$ for some $q > 2$ (this makes use of reverse H\"{o}lder inequalities derived from the elliptic system \eqref{syst1}).  In particular, $\nabla \sigma$ is H\"{o}lder continuous on any compact subset of $Q$. Moreover, \eqref{syst1} can now be written as
\begin{equation}\label{syst2} \partial^2_1 \sigma + (1+h_0''(1+\partial_2 \sigma)) \partial^2_2 \sigma = 0 \quad \textrm{a.e.\@ in} \ Q.\end{equation}
It will be useful below to note that the strong convexity of $h$ together with Lemma \ref{lem:gc} and assumption \eqref{sp1} imply that there are positive constants $c_1$ and $c_2$  such that $c_1 \leq h_0''(1+\partial_2 \sigma(x)) \leq c_2$ holds on $Q$.

The regularity asserted in the lemma is $W^{2,2}(Q\setminus V)$, where $V$ is described above, so we must consider the behaviour near boundary points.   Let $x_0 \in \partial Q$ be such that $x_0 \notin V$.  If $x_0 \in \partial Q_1$ where the boundary condition $\sigma = 0$ is applied, then one can proceed as in the proof of \cite[Theorem 8.12]{GT}.   Specifically, differencing shows that both $\partial_2 \partial_1\sigma$ and $\partial^2_2 \sigma$ belong to $L^{2}(B(x_0,r) \cap Q)$ for all sufficiently small $r$.   Equation \eqref{syst2} then implies that $\partial^2_1 \sigma$ also belongs to $L^{2}(B(x_0,r) \cap Q)$.  The argument needed when $x_0$ belongs to $\partial Q_2=\partial Q \setminus \partial Q_1$ is similar.    A covering argument now implies that $D^2\sigma$ belongs to $L^2(Q \setminus V)$, as required.
\end{proof}  
	
\begin{proposition}  Let $W$ be given by \eqref{www} satisfy all the assumptions of Lemma \ref{lem:pen}, and in addition suppose that $1+ h_0'(1) \neq 0$.  Let $\sigma$ be the unique minimizer of $I_s$ in $\sca_s$ and suppose there is a constant $c>0$ such that $1+\partial_{2}\sigma(x) \geq 0$ for a.e.\@ $x$ in $Q$.  Then $\nabla \sigma$ is not continuous at the corners of $Q$.   
\end{proposition}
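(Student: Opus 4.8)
The plan is to show that the Jacobian $d := \det \nabla u_\sigma = 1 + \partial_2 \sigma$ is forced to take two different values on the two families of edges of $Q$, so that $d$, and hence $\partial_2\sigma$, cannot have a common limit at any corner. Throughout I would use that, by Lemma \ref{lem:pen} and hypothesis \eqref{sp1}, $\sigma$ lies in $W^{2,2}(Q\setminus V)$ and satisfies the weak equation \eqref{syst1} with $L(p) = (p_1,\,1 + p_2 + h_0'(1+p_2))$, together with the pointwise equation \eqref{syst2} in the interior.

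First I would extract the natural (traction-free) boundary condition on $\partial Q_2 = \partial Q \setminus \partial Q_1$. Fix an arc $\Gamma$ of $\partial Q_2$ whose closure avoids the corners, and take a smooth $\eta$ supported in a small neighbourhood of $\Gamma$ that vanishes on $\partial Q_1$. By \eqref{sp1}, $\sigma + \eps\eta \in \sca_s$ for small $\eps$, so \eqref{syst1} holds for this $\eta$; integrating by parts (justified by the $W^{2,2}$ regularity near $\Gamma$) and using \eqref{syst2} to kill the interior term leaves $\int_{\Gamma} (L(\nabla\sigma)\cdot n)\,\eta\,d\sch^{1} = 0$. Since the outward normal on $\partial Q_2$ is $\pm e_2$ and $\eta$ is arbitrary, this yields the trace identity $L(\nabla\sigma)\cdot e_2 = d + h_0'(d) = 0$ along $\partial Q_2$ away from the corners.

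Next I would identify the value this forces. The function $g(s) := s + h_0'(s)$ is continuous and strictly increasing on $(0,\infty)$ by convexity of $h_0$, with $g(s) \to -\infty$ as $s\to 0+$ (since $h_0'(s)\to-\infty$ by (H2)) and $g(s)\to+\infty$ as $s\to\infty$; hence $g$ has a unique zero $d^* > 0$, and the boundary condition forces $d = d^*$ on $\partial Q_2$. The extra hypothesis $1 + h_0'(1) = g(1) \neq 0$ says precisely that $d^* \neq 1$. On the Dirichlet part $\partial Q_1$ (the sides $x_1 = \pm 1$) the condition $\sigma \equiv 0$ makes its tangential derivative $\partial_2\sigma$ vanish, so the trace of $d = 1 + \partial_2\sigma$ equals $1$ there. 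Now fix a corner, say $(1,1)$: approaching it along the adjacent subarc of $\partial Q_1$ gives $d \to 1$, while approaching along the adjacent subarc of $\partial Q_2$ gives $d \to d^* \neq 1$. If $\nabla\sigma$ were continuous at $(1,1)$ as a map on $\bar{Q}$, then $\partial_2\sigma$, and so $d$, would have a single limit there, contradicting the two distinct edge limits. The identical argument at each of the four corners completes the proof.

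The step I expect to be most delicate is making the natural boundary condition usable right up to the corners. One must localise the test functions to arcs $\Gamma$ avoiding the excluded set $V$, invoke the $W^{2,2}(Q\setminus V)$ regularity of Lemma \ref{lem:pen} to integrate by parts, and then pass from the trace identity $d = d^*$ on $\partial Q_2$ to an honest limit $d \to d^*$ as one approaches the corner along $\partial Q_2$ — which in turn relies on the regularity of $\nabla\sigma$ up to $\partial Q_2$ off the corners. Establishing that the trace of $\partial_2\sigma$ equals the (vanishing) tangential derivative of $\sigma$ on $\partial Q_1$ is routine for $W^{2,2}$ functions but worth stating explicitly, as it underpins the value $d = 1$ on the Dirichlet edges.
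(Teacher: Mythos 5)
Your proposal is correct and takes essentially the same route as the paper: extract the natural boundary condition $1+\partial_2\sigma + h_0'(1+\partial_2\sigma) = 0$ on $\partial Q_2$ by applying Green's theorem to \eqref{syst1} (using the $W^{2,2}$ regularity from Lemma \ref{lem:pen}), note that the Dirichlet condition forces $\partial_2\sigma = 0$, hence the value $1+h_0'(1)\neq 0$, on $\partial Q_1$, and conclude that $\nabla\sigma$ cannot have a single limit at a corner. The only cosmetic difference is that you invert $g(s)=s+h_0'(s)$ to exhibit an explicit value $d^*\neq 1$ of the Jacobian on $\partial Q_2$, whereas the paper compares the values of $g$ along the two adjacent edges directly, which avoids needing the strict monotonicity and limiting behaviour of $g$.
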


Before giving the proof we remark that the condition $1+ h_0'(1) \neq 0$ is tailored to the choice of Dirichlet boundary condition $\sigma=0$ on $\partial Q_1$.  In general, one could easily adapt the condition on $h_0'$, which is not especially restrictive, to reflect a different choice of boundary condition.

\begin{proof} By Lemma \ref{lem:pen} and properties of the trace for Sobolev functions, the trace of $\nabla \sigma$ belongs to $L^{2}(A)$ where $A$ is any measurable subset of $\partial Q$ whose closure does not contain the corners of $Q$.   Green's theorem can now be applied to \eqref{syst1}, yielding
\[ \int_{\partial Q_2} L(\nabla \sigma)\cdot \nu \, \eta\, d\sch^1 = 0  \]    
for any $\eta$ whose compact support does not meet $\partial Q_1$, where $\nu$ is $\pm e_2$ are the only two possible outward pointing normals.  In particular, it follows that
\[ L_2(\nabla \sigma) = 0 \ \textrm{a.e.\@ on } \partial Q_2,\] 
that is 
\begin{equation}\label{perk1} 1+ \partial_2 \sigma(x_1,\pm 1)+ h_{0}'(\partial_2 \sigma(x_1,\pm 1) + 1) = 0 \ \textrm{ a.e. } x_1 \in (-1,1).
\end{equation} 
On the other hand, the boundary condition on $\partial Q_1$ implies  
\begin{equation}\label{perk2} 1+\partial_2 \sigma(\pm 1, x_2)+ h_{0}'(\partial_2 \sigma(\pm 1, x_2) + 1) = 1+h_0'(1) \ \textrm{ a.e. } x_2 \in (-1,1).
\end{equation} 
Therefore if $\nabla \sigma$ were continuous at the corner $(1,1)$, say, then 
\begin{align*} \lim_{x_1 \to 1} 1+\partial_2 \sigma(x_1, 1)+h_0'(\partial_2 \sigma(x_1, 1) + 1) & =  \lim_{x_2 \to 1}1+\partial_{2} \sigma( 1, x_2)+h_0'(\partial_{2} \sigma( 1, x_2) + 1) 
\end{align*}
would necessarily hold, which is impossible because the left-hand side is $0$ by \eqref{perk1} and the right-hand side is $1+h_0'(1) \neq 0$ by \eqref{perk2} and the hypothesis on $h_0'(1)$.   
\end{proof}

\setcounter{section}{0}
\setcounter{equation}{0}
\renewcommand{\theequation}{\thesection.\arabic{equation}}

\appendix
\section*{Appendix}
\renewcommand{\thesection}{A} 

The following result is needed in the proof of Proposition \ref{bruch1}.

\begin{lemma}\label{downampney} Let $\om=(-1,1/2) \times (-1,1)$ and define $\sigma_0$ as per \eqref{She:sigmazero}.  Let $\Sigma$ be the unique harmonic function agreeing with $\sigma_0$ on $\partial \om$, and let $E=\{y \in \om: \ 1+\Sigma_{,_2}(y) < 0\}$. 
Then $\nabla \Sigma_{,_2}$ belongs to $L^{2}(B(a,\gamma) \cap E)$, where $0 < 2\gamma < 1/4$ and $a$ is any point in the set $C:=\{(-1,\pm 1), (1/2,\pm 1), (0,\pm 1)\}$. \end{lemma}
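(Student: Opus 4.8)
The plan is to localize the estimate near each of the six points of the set $C$ and to classify them according to whether $\sigma_0$ is affine or merely has a corner there. Throughout I take $\gamma < 1/8$; this is precisely the role of the hypothesis $0 < 2\gamma < 1/4$, since any two points of $C$ are at distance at least $1/2$, so that $B(a,\gamma)$ meets only the edge(s) of $\om$ adjacent to $a$, and on those edges $\sigma_0$ is given by a single one of the three branches in \eqref{She:sigmazero}. Because $\Sigma$ is harmonic, so is $\Sigma_{,_2}$, and since $\Sigma$ is continuous up to the (Lipschitz) boundary, everything reduces either to the boundedness of $\nabla \Sigma_{,_2}$ on $B(a,\gamma)\cap\om$ or to the emptiness of $E$ near $a$.

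First I would treat the genuine corners $a\in\{(-1,1),(-1,-1),(1/2,1),(1/2,-1)\}$, where two perpendicular straight edges meet. On each of these edges $\sigma_0$ is the trace of a single affine function $L_a$: one has $L_a=0$ at $a=(-1,\pm1)$, while a short calculation gives $L_a=-2x_1-x_2+1$ at $(1/2,1)$ and $L_a=2x_1-x_2-1$ at $(1/2,-1)$. The point is that such an $L_a$ exists at all, i.e. the data on the two edges are \emph{affine-compatible}. Then $v:=\Sigma-L_a$ is harmonic on $B(a,\gamma)\cap\om$ and vanishes on both adjacent segments, so two successive Schwarz reflections across these perpendicular segments extend $v$ to a harmonic, hence $C^\infty$, function on a full neighbourhood of $a$. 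Consequently $\nabla\Sigma_{,_2}=\nabla v_{,_2}$ is bounded near $a$, so $\nabla\Sigma_{,_2}\in L^2(B(a,\gamma)\cap\om)$, and a fortiori $\nabla\Sigma_{,_2}\in L^2(B(a,\gamma)\cap E)$.

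Next I would treat the points $a\in\{(0,1),(0,-1)\}$, which lie on a flat part of $\partial\om$ at which $\sigma_0$ has a corner (the tangential slope jumps by $\mp 2$). Here I exhibit the singular part explicitly. Writing $w=x_1+i(1-x_2)$ at $a=(0,1)$ and $w=x_1+i(x_2+1)$ at $a=(0,-1)$, and using the branch $\log w=\ln r+i\theta$ with $r=|x-a|$ and $\theta=\arg w\in[0,\pi]$, the harmonic function $\Psi:=\mp 2x_1\pm\tfrac{2}{\pi}\,\mathrm{Im}(w\log w)$ has, on the adjacent edge, the same trace $\mp 2\max(x_1,0)$ as $\sigma_0$. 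Hence $\Sigma-\Psi$ vanishes on that flat edge and extends smoothly across it by a single Schwarz reflection, so $(\Sigma-\Psi)_{,_2}$ is bounded near $a$. A direct computation gives $\Psi_{,_2}=-\tfrac{2}{\pi}(\ln r+1)$ at both points, which tends to $+\infty$ as $r\to 0$. Therefore $\Sigma_{,_2}\to+\infty$, so $1+\Sigma_{,_2}>0$ on $B(a,\gamma)\cap\om$ once $\gamma$ is small; that is, $E\cap B(a,\gamma)=\emptyset$ and the assertion holds trivially on these balls.

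The main obstacle is exactly the sign in this last step. The conclusion rests on the coefficient of $\ln r$ in $\Sigma_{,_2}$ being \emph{negative}: were it positive, then $\Sigma_{,_2}\to-\infty$, the set $E$ would fill a neighbourhood of $a$, and since $|\nabla\Sigma_{,_2}|\sim r^{-1}$ fails to be square-integrable there the statement would be false. Pinning this sign down requires the explicit singular function $\mathrm{Im}(w\log w)$ rather than an abstract corner-regularity estimate, and it is here that the particular geometry of $\sigma_0$ — the direction in which the boundary is pinched — enters decisively. The reflection arguments at the four genuine corners are, by contrast, routine once affine-compatibility of the data has been checked.
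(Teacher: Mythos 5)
Your proof is correct, but it takes a genuinely different route from the paper's. The paper proves the lemma by a single difference-quotient argument applied uniformly at all six points of $C$: it extends $\Sigma$ across the relevant edge (by zero at the corners, by the boundary data $\sigma_0(x_1,\pm 1)$ at $(0,\pm 1)$), tests harmonicity with the truncated test function $\varphi=\eta^2\min\{\Delta^{2,h}\Sigma+1,0\}$ --- admissible precisely because the one-sided bound $\Sigma(x_1,1+h)-\Sigma(x_1,1)\leq -h$, quoted from Step 1 of the proof of Proposition \ref{bruch1}, makes $\varphi$ vanish on the boundary --- and then obtains a Caccioppoli-type estimate on the sublevel sets $E^h$, passing to the limit with Nirenberg's lemma to get square-integrability of $\nabla\Sigma_{,_2}$ on $E$ only. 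You instead classify the points of $C$ and resolve the local singularity explicitly: at the four genuine corners you verify affine compatibility of the data (your functions $L_a$ are correct) and invoke a double Schwarz reflection, while at $(0,\pm 1)$ you subtract the explicit singular solution $\Psi=\mp 2x_1\pm\tfrac{2}{\pi}\,\mathrm{Im}(w\log w)$, whose trace matches $\sigma_0$ and whose derivative $\Psi_{,_2}=-\tfrac{2}{\pi}(\ln r+1)$ I have checked, including the crucial sign; since $\Psi_{,_2}\to+\infty$ while $(\Sigma-\Psi)_{,_2}$ stays bounded by reflection, $E$ misses a neighbourhood of $(0,\pm 1)$ and the claim is vacuous there. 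Your route yields strictly more than the paper's: boundedness, not merely square-integrability, of $\nabla\Sigma_{,_2}$ on $B(a,\gamma)\cap E$; the sharp observation that the restriction to $E$ is genuinely needed only at $(0,\pm 1)$, where $|\nabla\Sigma_{,_2}|\sim r^{-1}$ fails to be in $L^2$ of the full half-ball; and logical independence from Proposition \ref{bruch1}, whereas the paper's proof of the lemma leans on a step of the very proposition that later invokes it. What the paper's argument buys in exchange is robustness: it never needs the explicit singular profile, only a one-sided boundary estimate, so it transfers directly to the more general boundary conditions the authors allude to, while your computation is tied to the specific piecewise-affine datum $\sigma_0$. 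One cosmetic slip: your preamble claims that near each $a\in C$ the adjacent edges carry a single branch of \eqref{She:sigmazero}; at $(0,\pm 1)$ the adjacent edge carries two branches (that kink is the whole point), though your detailed treatment there handles it correctly.
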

\begin{proof}  We deal first with the case that $a$ is a corner of $\om$, and without loss of generality take $a=(-1,1)$.   Since $\Sigma(x)=0$ for $x \in B(a,2\gamma) \cap \partial \om$, we may extend $\Sigma$ by zero outside $\om$.  Let $\eta \in C_c^1(B(a,2\gamma))$ and define the test function 
\begin{align}\label{test}
\varphi(x) & = \eta^2(x)\min\{\Delta^{2,h}\Sigma(x)+1, 0\} 
\end{align} 
for $x \in \om$ and $h \in (-h_0,h_0)$, where $h_0$ is suitably small.  Here, 
\begin{align*}\Delta^{2,h}\Sigma(x_1,x_2)= \frac{\Sigma(x_1,x_2+h)-\Sigma(x_1,x_2)}{h}
\end{align*}
is the difference quotient in the $e_2 = (0,1)$ direction.  According to the proof of Proposition \ref{bruch1},  $\Sigma(x_1, 1+h) - \Sigma(x_1,1) \leq -h$ for $h < 0$, so that in particular $\Delta^{2,h}\Sigma(x_1,1)+ 1 \geq 0$.  For positive $h$ the difference quotient is zero, so it follows that $\Delta^{2,h}\Sigma(x_1,1)+ 1 \geq 0$ for $-h_0 < h < h_0$ and hence that $\varphi(x)=0$ for $x \in B(0,2\gamma) \cap \partial \om$.  Thus $\varphi \in W_0^{1,2}(B(0,2\gamma) \cap \om)$ and since $\Sigma$ is harmonic in this set we must have $\langle \nabla \Sigma, \nabla \varphi \rangle = 0$, where $\langle \cdot, \cdot \rangle$ is the $L^2(\om)$ inner product.   The standard procedure is now to `difference' this inner product, which leads to 
\begin{align*}\int_{\om} \Delta^{2,h}(\nabla \Sigma) \cdot \nabla \varphi \,dx = 0.   
\end{align*}
Inserting $\varphi$ and applying standard inequalities (see, for example, the proof of \cite[Theorem 8.12]{GT}), we obtain
\begin{align} \label{est1} \int_{\om} \eta^2 |\Delta^{2,h}(\nabla \Sigma)|^2 \chi^2_{E^h}\,dx \leq C
\int_{\om} |\nabla \eta|^2 |\Delta^{2,h}\Sigma + 1|^2\chi^2_{E^h} \,dx 
\end{align}
for some constant $C$ that is independent of $\om$, $h$ and $\Sigma$, and where $E^h = \{y \in \om: \ \Delta^{2,h}\Sigma(y) + 1 < 0\}$ and $\chi_{E^h}$ its characteristic function.    

Now let $y \in E$.  Since $\Sigma$ is harmonic it is smooth in $\om$, so it follows that there is $\rho_y > 0$ and $h_y > 0$ such that $B(y,\rho_y) \subset E^h$ for all $h \in (-h_y,h_y)$.   In particular, $\chi_{E^h} \to \chi_{E}$ pointwise almost everywhere in $B(0,2\gamma) \cap \om$.   Take $\eta$ to be a cut-off function satisfying $\eta(z)=1$ if $z \in B(a,\gamma)$ and $|\nabla \eta| \leq c/\gamma$ for some fixed constant $c$.   Using this and Nirenberg's Lemma (see \cite[Lemma 7.24]{GT}, for example), we obtain from \eqref{est1} that 
\begin{align*}
\int_{E} \eta^2 |\nabla \Sigma_{,_{2}}|^2\,dx \leq \frac{C'}{\gamma^2} \int_{B(a,2\gamma) \setminus B(a,\gamma)} 1+|\nabla \Sigma|^2 \,dx.\end{align*}
This proves the lemma in the case that $a$ is a corner of $\om$.

Now suppose $a=(0,1)$,  let $\eta$ be as above and extend $\Sigma$ by zero outside $\om$ in the region $\{x \in \R^2: x_1 < 0\}$ and by $-2x_1$ in the region $\{x \in \om: x_1 > 0\}$, that is we let $\Sigma(x_1,x_2)=\sigma_0(x_1,1)$ if $x_2>1$.   Using the same test function as defined in \eqref{test} together with the fact established in Proposition \ref{bruch1} that $\Sigma(x_1,1+h)-\sigma_0(x_1,1) \leq -h$ for $h < 0$,  it again follows that $\nabla \Sigma_{,_{2}} \in L^{2} (B(a,\gamma) \cap E)$.  This concludes the proof of the lemma.
\end{proof}

\newpage
	\bibliographystyle{abbrv}
	%\bibliography{../../references.bib}
\bibliography{./references}
%\printbibliography
	\end{document}